\newcommand{\Mdef}[2]{\newcommand{#1}{\relax \ifmmode #2 \else $#2$\fi}}
\newcommand{\oline}{\overline}
\newcommand{\Exterior}{\mathchoice{{\textstyle\bigwedge}}%
    {{\bigwedge}}%
    {{\textstyle\wedge}}%
    {{\scriptstyle\wedge}}}
\newtheorem{theorem}{Theorem}[section]
\newtheorem{lemma}[theorem]{Lemma}
\newtheorem{prop}[theorem]{Proposition}
\newtheorem{definition}[theorem]{Definition}
\newtheorem{corollary}[theorem]{Corollary}
\newtheorem{remark}[theorem]{Remark}
\numberwithin{equation}{section}
\DeclareMathOperator{\im}{Im}
\DeclareMathOperator{\ad}{ad}
\DeclareMathOperator{\pr}{pr}
\DeclareMathOperator{\Symp}{Symp}
\Mdef{\F}{\mathbb{F}}
\Mdef{\N}{\mathbb{N}}
\Mdef{\R}{\mathbb{R}}
\Mdef{\Z}{\mathbb{Z}}
\Mdef{\Q}{\mathbb{Q}}
\Mdef{\A}{\mathbb{A}}
\Mdef{\C}{\mathbb{C}}
\Mdef{\One}{\mathds{1}}
\Mdef{\RP}{\mathbb{R}\text{P}}
\newcommand{\ve}{\varepsilon}
\newcommand{\pder}[2][]{\frac{\partial#1}{\partial#2}}
\newcommand{\der}[2][]{\frac{\text{d}#1}{\text{d}#2}}
\title[Symplectification, Normal Cartan Connection, and Cartan Prolongations] {Symplectification of Rank 2 Distributions, Normal Cartan Connections, and Cartan Prolongations}
\date{\today}
\thanks{ I.\ Zelenko is partly supported by NSF grant DMS 2105528 and Simons Foundation Collaboration Grant for Mathematicians 524213.}
\author{Nicklas Day}
\address{Nicklas Day\\
	Department of Mathematics\\
	Texas A\&M University\\
	College Station\\
	Texas \ 77843\\
	USA}
\email{ncday@tamu.edu}
\urladdr{\url{https://sites.google.com/tamu.edu/nicklasday/home}}
\author{Boris Doubrov 
	\orcidlink{0000-0002-1930-1888}
}
\address{Boris Doubrov\\Belarusian State University\\ 
Nezavisimosti Ave.~4\\Minsk 220030\\Belarus}
\email{boris.doubrov@bsu.by}
\author{Igor Zelenko 
	\orcidlink{0000-0001-7900-2567}
} 
\address{Igor Zelenko\\
         Department of Mathematics\\
         Texas A\&M University\\
         College Station\\
         Texas \ 77843\\
         USA}
\email{zelenkotamu@tamu.edu}
\urladdr{\url{http://www.math.tamu.edu/~zelenko}}
\begin{document}
\subjclass[2020]{58A30, 53C05, 53A55, 58A17}
\keywords{distributions (subbubdles of tangent bundles), Tanaka prolongation,  Cartan prolongation, abnormal extremals, absolute parallelism,  normal Cartan connection, jet spaces.}
\begin{abstract}

We study the Doubrov--Zelenko symplectification procedure for rank $2$ distributions with $5$-dimensional cube---originally motivated by optimal control theory---through the lens of Tanaka--Morimoto theory for normal Cartan connections. In this way, for ambient manifolds of dimension \( n \geq 5 \), we prove the existence of the normal Cartan connection associated with the symplectified distribution. Furthermore, we show that this symplectification can be interpreted  as the $(n-4)$th iterated Cartan prolongation at a generic point. This interpretation naturally leads to two questions for  an arbitrary rank $2$ distribution with $5$-dimensional cube:
(1) Is the \((n-4)\)th iterated Cartan prolongation the minimal iteration where the Tanaka symbols become unified at generic points? (2) Is the \((n-4)\)th iterated Cartan prolongation the minimal iteration admitting a normal Cartan connection via Tanaka--Morimoto theory? Our main results demonstrate that: (a) For $n > 5$, the answer to the second question is positive (in contrast to the classical $n = 5$ case from $G_2$-parabolic geometries); (b) For  $n \geq 5$, the answer to the first question is negative: unification occurs already at the \((n-5)\)th iterated Cartan prolongation.
\end{abstract}
\maketitle

\section{Introduction}
 In their 2009 paper \cite{doubrov2009local}, B. Doubrov and I. Zelenko provided a novel approach to the local equivalence problem for rank 2 distributions called the {\it symplectification procedure}. Given a smooth rank $2$ distribution on an $n$ dimensional manifold (henceforth called a $(2,n)$ distribution) $D$ with $n>5$ satisfying a generic condition called {\it maximality of class}\footnote{To be defined in section \ref{sectionSympD}}, the symplectification procedure yields a canonical absolute parallelism on a fiber bundle over the original manifold. Because $D$ can be recovered from this absolute parallelism, its structure functions form a complete system of local invariants for $D$. In a recent article \cite{day2025canonicalframes}, it was shown that any bracket generating rank 2 distribution with 5-dimensional cube\footnote{The \textit{cube} of $D$ is the third piece of its weak derived flag, which is defined in section \ref{sectionTanakaSymbol}} is of maximal class at a generic point, effectively removing this condition on $D$.
 
 The main advantage of this approach over the standard Tanaka theory \cite{Tanaka1970} is that it gives {\it a uniform construction of canonical absolute parallelism independent of the Tanaka symbol of the original distribution}; in contrast to the standard Tanaka approach, the symplectification procedure does not require one to classify all possible Tanaka symbols nor assume their constancy.

One unusual aspect of the symplectification procedure is that, rather than directly applying Tanaka theory to construct a canonical absolute parallelism for the original distribution $D$, the method first constructs a new distribution called $\Symp(D)$ on a certain fiber bundle over the original manifold. The canonical absolute parallelism is then built for this new distribution. However, this modification does not alter the equivalence problem, since $D$ can be uniquely recovered from $\Symp(D)$.

The canonical frame constructed in \cite{doubrov2009local} is not in general a Cartan connection. In this paper, for $(2,n)$ distribution $D$ with $5$-dimensional cube and $n>5$, we will
\begin{enumerate}
    \item Replace the final steps in the symplectification procedure with an application of Tanaka--Morimoto theory to obtain a canonical Cartan connection. \cite{morimoto1993geometric}   
    \item Interpret the germ of the distribution $\Symp(D)$ as the $(n-4)$th iterated Cartan prolongation of $D$ at generic points;
    \item Show that similarly to $\Symp(D)$ the $(n-5)$th iterated Cartan prolongation of $D$ at a generic point has fixed Tanaka symbol independent of $D$, but lower iterated Cartan prolongation may have non-isomorphic Tanaka symbols; 
    \item There is a symbol of dimension $n$ such that for each distribution $D$ with this symbol and each $i<n-4$, Tanaka-Morimoto theory cannot be used to assign a normal Cartan connection to the $i$th iterated Cartan prolongation of $D$. This symbol is the one corresponding to the maximally symmetric $(2,n)$ distribution with $5$-dimensional cube. In this sense, the $(n-4)$th Cartan prolongation is the earliest one to which Tanaka-Morimoto theory can be applied.
    
\end{enumerate}
These objectives are accomplished in sections \ref{sectionNormalization}, \ref{sectionCorrespondence}, \ref{sectionNonexistence}, and \ref{sectionNonexistence} respectively.

In fact, we also prove items (2), and (3) for $n=5$. Besides, the case $n=5$, in contrast to $n>5$, is a parabolic geometry (of $G_2$-type): the $(2,5)$ distributions of maximal class are exactly those with small growth vector $(2,3,5)$, which are considered in the Cartan five-variable paper \cite{FiveVariables}. These distributions correspond to the grading of $G_2$ with marked shorter root; their first Cartan prolongations correspond to the Borel grading of $G_2$ (i.e. with both roots marked). Thus, a normal Cartan connection can be assigned both for $D$ and $\Symp(D)$ according to \cite{Tanaka1979}. Consequently, in the case of $n=5$, item (1) holds while item (4) does not hold.

Given a distribution of {\it constant symbol} with {\it linear invariant normalization condition}\footnote{To be defined in sections \ref{sectionTanakaSymbol} and \ref{sectionNormalizationCondition}, respectively}, Tanaka--Morimoto theory canonically produces a regular, normal Cartan geometry. In the case of $(2,n)$ distributions with $5$ dimensional cube (which is a necessary condition for maximality of class \cite{doubrov2009local}), there are $3$ non-isomorphic Tanaka symbols if $n=6$, then  $8$ non-isomorphic symbols if $n=7$, and continuous parameters appear in the set of all symbols for $n\geq 8$ for specific dimensions of the graded components. Therefore, for $n\geq 8$, distributions with certain fixed small growth vector are generically of non-constant symbol, thus Tanaka--Morimoto theory cannot be applied for them. Besides, for every $n\geq 6$ there are symbols which do not admit linear invariant normalization conditions (see Theorem \ref{Nonexistence Theorem} below), so that Tanaka--Morimoto theory cannot be applied for distribution with these symbols at every point as well.
However, the symplectified distribution $\Symp(D)$ has two exceptional properties:
 \begin{enumerate}[label=(\roman*)]
     \item 
     $\Symp(D)$ has constant symbol independent of $D$ with universal prolongation isomorphic to $\mathfrak{gl}_2(\R)\rtimes \mathfrak{heis}_{2n-5}$ (with a fixed grading), where $\mathfrak{heis}_{2n-5}$ is the $(2n-5)$-dimensional Heisenberg algebra. This property (along with the semidirect product structure on $\mathfrak{gl}_2(\R)\rtimes \mathfrak{heis}_{2n-5}$) is established in section \ref{sectionSympSymbol}. Briefly, $\mathfrak{gl}_2(\mathbb R)$ acts on $\mathfrak{heis}_{2n-5}$ as follows:  $\mathfrak{sl}_2(\mathbb R)$ acts irreducibly on a hyperplane $\mathcal E$ of $\mathfrak{heis}_{2n-5}$ transversal to its center and the identity matrix of  $\mathfrak{gl}_2(\R)$ acts as the identity on $\mathcal E$.
     \item The universally prolonged Tanaka symbol of $\Symp(D)$ admits a linear invariant normalization condition. This property is established in section \ref{sectionNormalization} using a criterion given by T. Morimoto in \cite{morimoto1993geometric}
 \end{enumerate}
 Because $\Symp(D)$ enjoys these two properties, Tanaka--Morimoto theory can be applied to $\Symp(D)$ to obtain not only a canonical frame, but a canonical Cartan connection.
 
 Thus for any $(2,n)$ distribution $D$ with $5$-dimensional cube and $n>5$, the $(n-4)$th iterated Cartan prolongation of $D$, which is locally equivalent to $\Symp(D)$ at a generic point, enjoys properties (i) and (ii). This leads naturally to the question: Do lower Cartan prolongations enjoy these properties? This question is answered in the following proposition:
 
 \begin{prop} 
    Let $D$ be a $(2,n)$ distribution with $5$-dimensional cube. Then at a generic point, the $(n-5)$th iterated Cartan prolongation $\pr^{(n-5)}(D)$ has Tanaka symbol which is independent of $D$, but this symbol does not admit a linear invariant normalization condition.
 
    On the other hand, at a generic point the $(n-4)$th Cartan prolongation $\pr^{(n-4)}(D)$ has Tanaka symbol which is independent of $D$, and this symbol admits a linear invariant normalization condition.
 \end{prop}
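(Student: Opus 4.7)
The two halves of the proposition rely on different machinery, and I would treat them separately.

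For $\pr^{(n-4)}(D)$ there is essentially nothing new to do: item~(2) of the introduction (to be established in Section~\ref{sectionCorrespondence}) identifies $\pr^{(n-4)}(D)$ with $\Symp(D)$ up to local equivalence at a generic point, and then properties~(i) and~(ii) of $\Symp(D)$ proved in Sections~\ref{sectionSympSymbol} and~\ref{sectionNormalization} supply, respectively, the constant Tanaka symbol $\mathfrak m = \mathfrak{heis}_{2n-5}$ and the linear invariant normalization condition.

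For $\pr^{(n-5)}(D)$ the plan has two steps. First, to prove that the Tanaka symbol $\mathfrak{m}'$ of $\pr^{(n-5)}(D)$ at a generic point is independent of $D$, I would combine two ingredients: the identity $\pr^{(n-4)}(D) = \pr(\pr^{(n-5)}(D))$ together with the already-established universality of the Tanaka symbol $\mathfrak m$ of $\pr^{(n-4)}(D)$, and a direct computation on the explicit flat model provided by the maximally symmetric $(2,n)$ distribution, whose iterated Cartan prolongations are known in closed form. Since the symbol of a first Cartan prolongation at a generic point is canonically determined by the symbol of the input, the requirement that the symbol of $\pr^{(n-4)}(D)$ be $\mathfrak m$ together with the model computation pins $\mathfrak{m}'$ down up to isomorphism, independent of $D$.

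Second, to rule out a linear invariant normalization condition for $\mathfrak{m}'$, I would apply Morimoto's criterion in the same spirit as in Section~\ref{sectionNormalization}: existence of such a condition is equivalent to the existence of a $\mathfrak g^0$-invariant complement to the image of a Spencer-type differential in a specific cochain group of the universal prolongation $\mathfrak g(\mathfrak m')$. The key structural fact driving the negative answer is that the reductive $\mathfrak{gl}_2(\mathbb R)$ block which made the splitting work for $\mathfrak m$ has been excised in passing from $\mathfrak m$ to $\mathfrak m'$; I expect to exhibit an explicit cocycle whose class fails to split. The main obstacle is the cohomological verification itself---identifying a concrete obstructing cocycle and checking the absence of an invariant complement by direct computation in the cochain complex. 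For this I would draw on the general non-existence machinery to be developed in Section~\ref{sectionNonexistence}, where Theorem~\ref{Nonexistence Theorem} treats obstructions of precisely this type for low Cartan prolongations.
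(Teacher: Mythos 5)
Your treatment of the $(n-4)$th prolongation is essentially the paper's: combine the local equivalence $\pr^{n-4}(D)\simeq\Symp(D)$ (Proposition \ref{Local Equivalence}) with Proposition \ref{Tanaka Symbol} and Theorem \ref{Existence Theorem}. (One slip: the symbol there is $\langle X\rangle\rtimes\mathfrak{heis}_{2n-5}$, of dimension $2n-4$, not $\mathfrak{heis}_{2n-5}$.) The first genuine gap is in your argument for the unification of the symbol at level $n-5$. You propose to deduce the symbol $\mathfrak m'$ of $\pr^{n-5}(D)$ from the known symbol of $\pr^{n-4}(D)=\pr(\pr^{n-5}(D))$, on the grounds that ``the symbol of a first Cartan prolongation is canonically determined by the symbol of the input.'' Even granting that claim, you would be inverting a map that is not injective: the paper's remark after Theorem \ref{Unification Theorem} exhibits two distributions of maximal class whose $(n-6)$th prolongations have non-isomorphic symbols while their $(n-5)$th prolongations share the same symbol. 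So knowing the symbol upstairs does not pin down the symbol downstairs. The paper's proof of Theorem \ref{Unification Theorem} instead uses strictly more data than the symbol of $\pr^{n-4}(D)$: the involutivity conditions \eqref{VV} and \eqref{VJ} satisfied by the adapted frame on $M_{n-4}$, which are inherited from the flag structure of $\Symp(D)$ and are \emph{not} consequences of the symbol alone for $n>6$ (see the remark following Proposition \ref{Tanaka Symbol}). With these in hand one computes the deprolonged brackets directly via \eqref{(n-6) brackets}. Your ``flat model'' computation only settles the most symmetric germ, not an arbitrary $D$.

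The second gap is in the nonexistence half. Your structural explanation --- that the reductive $\mathfrak{gl}_2(\R)$ block ``has been excised'' in passing from $\mathfrak m$ to $\mathfrak m'$ --- is false: by Lemma \ref{prolong_k_lemma} the universal prolongation of $\mathfrak s^{k,n}$ is isomorphic to $\mathfrak{gl}_2(\R)\rtimes\mathfrak{heis}_{2n-5}$ for every $0\le k<n-4$; only the grading changes. What actually breaks the splitting is the opposite phenomenon: for $k\le n-5$ the elements $\ve_1,\dots,\ve_{n-4-k}$ acquire non-negative weight and therefore join $\mathfrak g^0$, so the invariance requirement on $\mathcal N$ becomes more stringent. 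The paper's Theorem \ref{Nonexistence Theorem} exploits exactly this: the $H$-eigenspace of $C^1_+$ of eigenvalue $2(n+k-3)$ is spanned by the single closed cochain $\ve_{n-3-k}^*\otimes\ve_{2n-6}$, so $\im(\partial)$ vanishes in that eigenvalue and $\mathcal N$ is forced to contain $\ve_{n-3-k}^*\wedge\eta^*\otimes\ve_{2n-6}$; acting by the newly non-negative elements $\ve_1$ and $\ve_{n-4-k}$ then drives this cochain into $\im(\partial)$, contradicting $\mathcal N\cap\im(\partial)=0$. Your plan defers precisely this computation to ``machinery to be developed,'' and the heuristic you give for why a cocycle should obstruct would point you away from, rather than toward, the elements of $\mathfrak g^0$ that actually produce the contradiction.
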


 In section 2, we lay out the basic constructions utilized throughout the paper: Tanaka symbols, Tanaka prolongations, Cartan connections, invariant normalization conditions, and Cartan prolongations. 

 In section 3, we review the symplectification procedure for rank 2 distributions originally constructed by B. Doubrov and I. Zelenko in \cite{doubrov2009local}, which treats the local equivalence problem for those rank 2 distributions at points satisfying a condition called maximality of class. In \cite{day2025canonicalframes}, we showed that any bracket-generating rank 2 distribution with $5$-dimensional cube is of maximal class at a generic point. Given such a rank 2 distribution $D$, the procedure yields a new rank 2 distribution $\Symp(D)$ to which Tanaka--Morimoto theory can be applied. $\Symp(D)$ is constructed by considering an even-dimensional subbundle $\mathcal{M}$ of the projectivized cotangent bundle which is naturally endowed with even contact structure. The kernel of the even contact structure is a line distribution $\mathcal{C}$ tangent to the so-called ``abnormal extremals'' of any optimal control problem on the space of curves tangent to the distribution $D$ (\cite{Liu-Sussmann,Agrachev-Sarychev, Zelenko99}). Osculating the fibers along the line distribution $\mathcal{C}$, we obtain an ascending flag that is complete on a generic subset $\mathcal{R}_D\subseteq \mathcal{M}$. The distribution $\Symp(D)$ on $\mathcal{R}_D$ is then constructed by taking skew-orthogonal complements of this flag with respect to the even contact structure. To conclude the section, we compute the Tanaka symbol for $\Symp(D)$ and show that this symbol admits a linear invariant normalization condition using a criterion by T. Morimoto in \cite{morimoto1993geometric}.

 In section 4, we construct the local equivalence between $\Symp(D)$ and $\pr^{(n-4)}(D)$, the $(n-4)$th Cartan prolongation of $D$. First, we demonstrate that the abnormal extremal trajectory corresponding to a point in $\mathcal{R}_D$ is determined by its $(n-4)$th jet. The equivalence is then established by assigning to an abnormal extremal trajectory in $\mathcal{R}_D$ its $(n-4)$th Cartan prolongation. 
 
 The final section addresses the natural question posed above. We show that for any distribution $D$ with $5$-dimensional cube, the distribution $\pr^{(n-5)}(D)$ has constant symbol which is independent of $D$, but this symbol does not admit a linear invariant normalization condition. Further, we show that for distributions $D$ of constant symbol isomorphic to the symbol of the most symmetric distribution with $5$-dimensional cube, each iterated Cartan prolongation $\pr^kD$ with $k\leq n-5$ has constant symbol at a generic point, but this symbol does not admit a linear invariant normalization condition. 

 \section{Preliminaries}
 \subsection{Tanaka Symbols and Tanaka Prolongations}
 \label{sectionTanakaSymbol}
 A {\it distribution} $D$ on a smooth manifold $M$ is a smooth linear subbundle of the tangent bundle $TM$. The {\it rank of $D$ at $q\in M$} is the dimension of the space $D(q)$; we assume distributions have constant rank.

For each $q\in M$ and each $i>0$, define

 \[
     D^{-1}(q) = D(q),\quad\text{and}\quad D^{-i}(q) = D^{-i+1}(q)+\Big\{[V,W](q): W\in \Gamma(D),V\in \Gamma(D^{-i+1})\Big\} 
 \]
 We call the descending flag $\{D^{-i}(q)\}_{i=0}^\infty$ the {\it weak derived flag of $D$ and $q$}; by convention, $D^{0}$ is the rank-zero subbundle of $TM$. If for some $i\in \N$, $D^{-i}(q) = T_qM$ at each $q\in M$, then we say $D$ is {\it bracket generating}. In this case, we call the smallest $\mu$ such that $D^{-\mu}(q)=T_qM$ the {\it step} of the distribution at $q$, and we call
 \[
    \Big(\text{rank}(D^{-1}(q)),\text{rank}(D^{-2}(q)),\ldots,\text{rank}({D}^{-\mu}(q))\Big)
 \]
 the {\it small growth vector} of $D$ at $q$. If the small growth vector is independent of $q$, we say $D$ is {\it equiregular}, and we call $D$ a $\Big(\text{rank}(D^{-1}(q)),\text{rank}(D^{-2}(q)),\ldots,\text{rank}({D}^{-\mu}(q))\Big)$ distribution. Often we are only interested in the first few entries of the small growth vector, omitting the subsequent entries.
 At each point $q\in M$, the {\it Tanaka symbol of $D$ at $q$} is the graded vector space associated to the weak derived flag
    \[
        \mathfrak{m}(q)=\bigoplus_{i=1}^\mu \mathfrak{m}_{-i}(q) = \bigoplus_{i=1}^\mu D^{-i}(q)/D^{1-i}(q).
    \]
 where $D^{-\mu}(q) = T_qM$. It is well-known that $\mathfrak{m}(q)$ inherits from the Lie bracket of vector fields the structure of a {\it nilpotent negatively graded Lie algebra}. In addition, $\mathfrak{m}(q)$ is {\it fundamental}; that is, $\mathfrak{m}(q)$ is generated as a Lie algebra by $\mathfrak{m}_{-1}(q)$. For more details, see \cite{Zelenko2009-qt} and \cite{cap2017}. The distribution $D$ is {\it of constant symbol} $\mathfrak{m}$ if for each $q\in M$, the Tanaka symbol $\mathfrak{m}(q)$ is isomorphic as a graded Lie algebra to the fixed graded Lie algebra $\mathfrak{m}$.

\begin{definition}
\label{universal_prol_def}
 The {\it universal Tanaka prolongation} of $\mathfrak{m}$ is the largest graded Lie algebra $\mathfrak{g}(\mathfrak{m}) = \bigoplus_i\mathfrak{g}_i$ satisfying
 \begin{enumerate}
     \item For each $i<0$, $\mathfrak{g}_i = \mathfrak{m}_i$
     \item If $X\in \mathfrak{g}_i$ for some $i\geq 0$ satisfies $[X,Y]=0$ for all $Y\in \mathfrak{g}_{-1}$, then $X=0$.
 \end{enumerate}
If $\mathfrak{m}$ is a Tanaka symbol, we shall briefly refer to $\mathfrak{g}(\mathfrak{m})$ as the {\it prolonged Tanaka symbol} of $D$. 
\end{definition}
Constructing the prolonged symbol $\mathfrak{g}(\mathfrak{m})$ more explicitly, one can recursively build piece $\mathfrak{g}_i$ of weight $i\geq 0$
 \begin{equation}
 \label{Tanaka prolongation formula}
    \mathfrak{g}_i=\Big\{ \varphi \in \bigoplus_{j=1}^{\mu} 
    \text{Hom}(\mathfrak{g}_{-j},\mathfrak{g}_{i-j})\Big|\ \varphi([X,Y]) = \big[\varphi(X),Y\big] + \big[X,\varphi(Y)\big]\Big\}
 \end{equation}
 Throughout, we write
 
 \[
    \mathfrak{g}^0 = \bigoplus_{j\geq 0} \mathfrak{g}_i
 \]

 \subsection{Regular Cartan Connections}
 We now define Cartan connections, following \cite[$\S$ 2.4]{cap2017}\footnote{Traditionally, a Cartan Connection is defined for a pair $(G,H)$, where $G$ is a Lie and a subgroup and $H\subseteq G$ is a Lie subgroup. For our purposes it suffices to define it for a Lie algebra $\mathfrak g$ and a Lie group $H$ whose Lie algebra is a subalgebra of $\mathfrak g$.}.
 
 \begin{definition}
     Let $H$ be a Lie group with Lie algebra $\mathfrak{h}$, and let $\mathfrak{h}$ be a subalgebra of a Lie algebra $\mathfrak{g}$. A {\it Cartan geometry of type $(\mathfrak{g},H)$} is a principal $H$-bundle $\mathcal P\to M$ with a $\mathfrak{g}$-valued 1-form $\omega\in\Omega^1(\mathcal{P};\mathfrak{g})$ satisfying
    \begin{enumerate}[label=(\roman*)]
         \item $\omega_\lambda: T_\lambda\mathcal{P}\to \mathfrak{g}$ is a linear isomorphism for each $\lambda\in \mathcal{P}$
         \item $R_h^*\omega = \text{Ad}(h^{-1})\circ\omega$ for all $h\in H$, where $R_{(-)}(-):H\times \mathcal{P}\to \mathcal{P}$ is the principal action
         \item $\omega(\zeta_X) = X$ for all $X\in \mathfrak{h}$, where $\zeta_X$ is the fundamental vector field corresponding to $X$
    \end{enumerate}
    The form $\omega$ is called a {\it Cartan connection of type $(\mathfrak{g},H)$}.
 \end{definition}
 A complete local invariant for a Cartan geometry $\mathcal{P}\to M$ of type $(\mathfrak{g},H)$ is given by its {\it curvature function}, which is a function
 \[
    \kappa:\mathcal{P}\to \Exterior^2 (\mathfrak{g}/\mathfrak{h})^*\otimes\mathfrak{g}
 \]
 At a point $p\in \mathcal{P}$, the curvature function $\kappa$ is defined on $X_1,X_2\in \mathfrak{m}$ by
 \[
    \kappa(p)\big(X_1,X_2) = [X_1,X_2]-\omega(p)\Big([\omega^{-1}(X_1),\omega^{-1}(X_2)]_p\Big)
 \]

 By property (i), the form $\omega$ is an absolute parallelism. Since the structure functions of this absolute parallelism can be deduced from the curvature function $\kappa$, this function constitutes a complete local invariant for the Cartan geometry.

 Now suppose that $\mathfrak{g}$ is a graded Lie algebra with non-negative part $\mathfrak{h}=\mathfrak{g}^0$ and negative part $\mathfrak{m}$; then $\mathfrak{g} = \mathfrak{h}\oplus\mathfrak{m}$. We can then identify
 \[
    \Exterior^2(\mathfrak{g}/\mathfrak{h})^*\otimes \mathfrak{g} =\Exterior^2\mathfrak{m}^*\otimes \mathfrak{g}  = C^2(\mathfrak{m},\mathfrak{g})
 \]
 as the second cochain module for $\mathfrak{m}$ with coefficients in $\mathfrak{g}$, which is an $\mathfrak{m}$-module under the adjoint representation. In general, the degree-$k$ cochain module is given by
 \[
    C^k(\mathfrak{m},\mathfrak{g})= 
    \Big(\bigwedge\nolimits ^k\mathfrak{m}^*\Big)\otimes \mathfrak{g}
 \]
 and the coboundary map $\partial$ defined on $\phi\in C^k(\mathfrak{m},\mathfrak{g})$ by
 \begin{gather}
    \partial(\phi)(X_0,X_1,\ldots, X_k) = \sum_{i} (-1)^i \big[X_i,\phi(X_0, X_1,\ldots, \widehat X_i, \ldots, X_k)\big] 
    \\
    + \sum_{i<j}\phi\big([X_i,X_j],X_0, X_1,\ldots, \widehat X_i,\ldots, \widehat X_j,\ldots, X_k\big).
 \end{gather}

 The grading on $\mathfrak{g}$ induces a grading on this complex; in particular, for $i_1,\ldots, i_k<0$ and $i_{k+1} \in \mathbb{Z}$, the subspace
 \[
    \mathfrak{g}_{i_1}^*\wedge \cdots \wedge \mathfrak{g}_{i_k}^*\otimes \mathfrak{g}_{i_{k+1}}
 \]
 has weight $\big(i_{k+1} - \sum_{j=1}^k i_j\big)$. The coboundary operator preserves this grading. Write $C^k_w(\mathfrak{m},\mathfrak{g})$ for the cochains of degree $k$ and graded weight $w$; also write $C^k_+(\mathfrak{m},\mathfrak{g})$ for the degree-$k$ cochains of positive graded weight. The complex also inherits an action of $\mathfrak{g}^0$ which is induced by the adjoint action.
 
 We call $\omega$ a {\it regular} Cartan connection if its curvature $\kappa$ takes values in $C^2_+(\mathfrak{m},\mathfrak{g})$. 

 \subsection{Invariant Normalization Conditions}
 \label{sectionNormalizationCondition}
 Now let $\mathfrak{m}$ be a Tanaka symbol with universal Tanaka prolongation $\mathfrak{g}=\mathfrak{g}(\mathfrak{m})$, and let $\mathfrak{g}^0$ be its non-negative part. Let $G_0$ be the group of grading preserving automorphisms of $\mathfrak{m}$, 
 $G^0:=G_0 \exp(\mathfrak{g}_+)$, where $\mathfrak{g}_+$ is the positively graded part of $\mathfrak{g}$, and the exponential is that of linear maps, viewing $\mathfrak{g}_+$ as in \eqref{Tanaka prolongation formula}.

 Under favorable conditions, Tanaka--Morimoto theory gives a method for assigning a regular Cartan connection of type $(\mathfrak{g},G^0)$ to each distribution of a fixed symbol. In order to make this assignment in a canonical way, one must make a choice of normalization.

 \begin{definition}
     A {\it linear invariant normalization condition for $\mathfrak{g}$} (or for $\mathfrak{m}$) is a subspace $\mathcal{N}\subseteq C_+^2(\mathfrak{m},\mathfrak{g})$ that is invariant under $\mathfrak{g}^0$, can be decomposed into weighted subspaces $\displaystyle{\mathcal{N} = \bigoplus_i \mathcal{N}_i}$ and satisfies
     \[
        C^2_+(\mathfrak{m},\mathfrak{g}) = \mathcal{N}\oplus \im\big(\partial\big|_{C^1_+(\mathfrak{m},\mathfrak{g})}\big)
     \]
     We often omit ``linear,'' referring to $\mathcal{N}$ simply as an invariant normalization condition.
 \end{definition}
  Once an invariant normalization condition $\mathcal{N}$ for $\mathfrak{g}$ has been fixed, we call a Cartan connection $\omega$ of type $(\mathfrak{g},G^0)$ {\it normal} if its curvature function $\kappa$ takes values in $\mathcal{N}$. 

 Given a linear invariant normalization condition for a symbol $\mathfrak{m}$, Tanaka--Morimoto theory assigns canonically to each distribution of constant symbol $\mathfrak{m}$ a Cartan geometry from which the distribution can be recovered. Reinterpreting Theorem 4.12 in \cite{cap2017} or Theorem 3.10.1 of \cite{morimoto1993geometric} in our context, we have the following theorem
 \begin{theorem}
 \label{Cartan Connection}
     Let $\mathfrak{m}$ be the a negatively graded, fundamental, nilpotent Lie algebra, and let
     \[
        \mathfrak{g}(\mathfrak{m}) = \mathfrak{m}\oplus \mathfrak{g}^0
     \]
     be its universal prolongation. Let $G^0$ be the Lie group $G_0\exp(\mathfrak{g}_+)$. Given a linear invariant normalization condition $\mathcal{N}$ for $\mathfrak{g}(\mathfrak{m})$, Tanaka--Morimoto theory assigns to each distribution of constant symbol $\mathfrak{m}$ a regular normal Cartan geometry of type $(\mathfrak{g},G^0)$ from which the distribution can be recovered.
 \end{theorem}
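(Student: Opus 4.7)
Since the theorem is explicitly a reinterpretation of the Tanaka--Morimoto theorem (Theorem 4.12 of \cite{cap2017}, Theorem 3.10.1 of \cite{morimoto1993geometric}), the plan is to verify that the hypotheses of those theorems are met in our setup and to invoke the iterative frame-bundle construction. Specifically, Definition \ref{universal_prol_def} of the universal prolongation together with the definition of linear invariant normalization condition given above supply exactly the algebraic data needed to run Morimoto's procedure: a graded transitive Lie algebra $\mathfrak{g}(\mathfrak{m})$ and a $\mathfrak{g}^0$-invariant splitting $C^2_+(\mathfrak{m},\mathfrak{g}) = \mathcal{N} \oplus \mathrm{im}(\partial|_{C^1_+})$ which is compatible with the weight grading.

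The construction itself proceeds by inductive prolongation. First, I would form the principal $G_0$-bundle $P_0 \to M$ whose fiber over $x$ is the space of graded Lie algebra isomorphisms $\mathfrak{m} \to \mathrm{gr}(T_xM) = \bigoplus_i D^{-i}(x)/D^{-i+1}(x)$; constancy of symbol is exactly what makes this a principal bundle rather than a fibration with varying fibers, and $P_0$ carries a tautological $\mathfrak{m}$-valued partial coframe recovering the weak derived flag. Next, I would inductively build principal $\exp(\mathfrak{g}_k)$-bundles $P_{k+1} \to P_k$ equipped with partial coframes valued in $\bigoplus_{j < k+1} \mathfrak{g}_j$. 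At each stage the freedom in the $\mathfrak{g}_k$-component of an extension is an affine space modeled on $C^1_{k+1}(\mathfrak{m},\mathfrak{g})$, and the obstruction to clean structure equations lies in $C^2_{+,k+1}(\mathfrak{m},\mathfrak{g})$; the normalization $\mathcal{N}_{k+1}$ then picks out a unique reduction so that the weight $k+1$ component of curvature lies in $\mathcal{N}_{k+1}$. Assembling the resulting forms yields a $\mathfrak{g}$-valued one-form $\omega$ on the total space $\mathcal{P}$, which is regular and normal by construction; axioms (i)--(iii) of a Cartan connection hold tautologically. Recovery of the distribution is immediate: $D = d\pi(\omega^{-1}(\mathfrak{g}_{-1}))$, where $\pi: \mathcal{P} \to M$.

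The main point I would check carefully is that the $\mathfrak{g}^0$-invariance of $\mathcal{N}$, not merely $G_0$-invariance, propagates through the inductive construction to make the canonical reduction at each stage preserved by the full residual structure group including the unipotent factor $\exp(\mathfrak{g}_+)$; this is what makes the output functorial in $D$ rather than depending on auxiliary choices. A secondary subtlety is that $\mathfrak{g}(\mathfrak{m})$ is not assumed finite-dimensional here, so $\mathcal{P}$ must in general be understood as the inverse limit of the tower of finite-dimensional bundles $P_k$; nevertheless, the connection form is well-defined weight by weight, matching Morimoto's original formulation. With these points verified, the theorem follows from the cited references without essential modification.
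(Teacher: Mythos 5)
Your proposal is correct and follows essentially the same route as the paper, which offers no independent proof of this theorem but simply cites Theorem 4.12 of \cite{cap2017} and Theorem 3.10.1 of \cite{morimoto1993geometric}; your sketch of the inductive frame-bundle prolongation with normalization by $\mathcal{N}$ at each weight is an accurate summary of what those references establish. The only minor imprecision is in the recovery formula, where one should take $D(\pi(p)) = T_p\pi\bigl(\omega_p^{-1}(\mathfrak{g}_{-1}\oplus\mathfrak{g}^0)\bigr)$, though this agrees with your expression since $T\pi$ annihilates the vertical directions $\omega^{-1}(\mathfrak{g}^0)$.
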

 
 Therefore, in the case that a linear invariant normalization condition can be constructed, Tanaka--Morimoto theory reduces the local equivalence problem to the comparison of Cartan curvatures.
 
 \subsection{Cartan Prolongation}
 \label{sectionCP}
 Cartan prolongations were originally developed in Cartan's papers \cite{FiveVariables} and \cite{Cartan1914-sz}, then refined in the work \cite{Bryant_Hsu} of R. Bryant and L. Hsu. Given a distribution $D$, an unparameterized curve is called {\it horizontal for $D$} if its tangent lines lie in $D$. Appending to such a curve its tangent lines yields its Cartan prolongation, which is an unparameterized curve in $\mathbb{P}D$. The Cartan prolongation of $D$ is the smallest distribution on $\mathbb{P}D$ containing the tangents of prolonged curves. 

\begin{definition}
    Let $D$ be a smooth rank 2 distribution, and let $M_1=\mathbb{P}D$ be the fiberwise projectivization of $D$. The {\it Cartan prolongation of $D$}, denoted $\pr D$, is the rank 2 distribution on $M_1$ such that for any $q\in M$ and any $\ell\in\mathbb{P}D(q)$,
     \[
         \big(\pr D\big)(q,\ell) = \big\{v\in T_{(p,\ell)}\mathbb{P}D: \pi_*(v) \in \ell\big\}
     \]
     where $\pi:\mathbb{P}D\to M$ is the natural projection.\footnote{In some sources, the Cartan prolongation is also called the {\it rank 1 prolongation}; e.g., \cite{SHIBUYA2009793}}
\end{definition}
     Further, for any smooth parameterized curve $\gamma: (-\ve,\ve)\to M$ horizontal to $D$ with nonvanishing derivative, the {\it Cartan prolongation of $\gamma$} is the curve in $M_1$ defined by $t\mapsto (\gamma(t), [\gamma'(t)])\in \mathbb{P}D$ and is denoted by $\pr\gamma$ or $\pr^1\gamma$.\footnote{In a similar way, one can also define Cartan prolongations of smooth unparametrized curves.}

     For $i>1$, we define recursively $\pr^iD = \pr(\pr^{i-1}D),$ with its canonical projection $\pi_{i}$ to $M_{i}= \mathbb{P}(\pr^{i-1}D)$. Also define recursively $\pr^i\gamma(t) = \pr(\pr^{i-1}\gamma)(t).$
     
  The Cartan prolongation $\pr(\gamma)$ is the unique lift of $\gamma$ to $M_1$ which is horizontal with respect to $\pr(D)$. 
  Let us make a few observations about Cartan prolongations.
 \begin{remark}
 \label{prolongation remark}
    \begin{enumerate}[label=(\roman*)]
        \item If $\gamma(t)$ has nonvanishing derivative, then $\pr\gamma(t)$ has nonvanishing derivative, so all higher prolongations are also defined. 
        \item The curve $\pr(\gamma(t))$ is {\it time-homogeneous}; that is, $\pr(\gamma(t))(\tau) = \pr(\gamma(t+\tau))(0)$ for any $\tau \in \R$.
        \item If $D$ is bracket generating, so too is $\pr D$.
        \item $\pr(D)^{-2}$ and $\pr(D)^{-3}$ are the pullbacks of $D$ and $D^{-2}$ under the projection $M_1\to M$, respectively. Consequently, if $D$ is bracket generating on a manifold of dimension greater than 2, $\pr(D)$ is a distribution with small growth vector $(2,3,4,\ldots)$.
        \item We can recover $(M,D)$ from the prolongation $(\pr M, \pr D)$; the fibers of the projection $\phi: \pr M\to M$ are the Cauchy characteristics of $(\pr D)^{-2}$. The distribution $D$ at a point $q\in M$ can then be recovered as
        \[ D(q) = \left\langle T_\lambda \phi\left(\pr D(\lambda)\right)\ \middle|\ \lambda \in \phi^{-1}(q)\right\rangle = T_{\lambda_0}\left(\left(\pr D\right)^{-2}(\lambda_0)\right) \]
        for any $\lambda_0\in \phi^{-1}(q)$. This recovery process, called deprolongation, demonstrates that local information is preserved by both prolongation and deprolongation. Observe that deprolongation can be applied to any distribution with small growth vector $(2,3,4,\ldots)$.
        \item The distribution $\pr^k D$ may not be equiregular \cite{MZ2010}. However, we shall only consider the generic subset of $M_k$ on which the prolonged distribution is equiregular.
    \end{enumerate}
 \end{remark}

 Let us show how deprolongation reduces the local equivalence problem for rank 2 distributions to the local equivalence problem for $(2,3,5,\ldots)$ distributions, which is treated by the symplectification procedure. Given any bracket generating rank 2 distribution $D$, one can apply the above deprolongation procedure until one of two situations arises:

\begin{enumerate}
    \item Deprolongation leads to a distribution with small growth vector $(2,3,5,\ldots)$. One can then apply the symplectification procedure, which will be introduced in the next section. The symplectification procedure yields a new rank 2 distribution $\Symp(D)$, from which $D$ can be recovered. Applying Tanaka--Morimoto theory to $\Symp(D)$ then yields a canonical normal Cartan connection, which provides a solution to the local equivalence problem \cite{Zelenko2009-qt}\cite{cap2017}
    
    \item Deprolongation leads to a bracket generating $(2,3)$ distribution. All such germs are equivalent to the 3-dimensional contact germ, so the original distribution is locally equivalent to the Goursat germ \cite{MZ2010} at a generic point.
 \end{enumerate}
 
 \section{The Symplectification Procedure}
 We now summarize the symplectification procedure originally constructed in \cite{doubrov2009local}, which will yield a new $(2,2n-4)$ distribution we call $\Symp(D)$. The procedure utilizes the natural contact structure on the projectivized cotangent bundle $\mathbb{P}T^*M$ to construct a so-called ``even contact structure'' on a submanifold $\mathcal{M}$ of codimension 3.  The kernel of this even contact structure is a canonical line distribution $\mathcal{C}$.

 \subsection{The Characteristic Line Distribution}
 Let $D$ be a  bracket- generating rank 2 distribution on a smooth manifold $M$ of dimension $n\geq5$. Define the annihilator of $D^{-\ell}$
 \[
     \big(D^{-\ell}\big)^\perp = \big\{ (p,q)\in T^*M : p\cdot v = 0\ \forall\ v\in D^{-\ell}(q)\big\}.
 \]
 
 Consider the fiberwise projectivization $\mathbb{P}T^*M$ of the cotangent bundle.
 Since each $(D^{-\ell})^\perp$ is a linear subbundle of $T^*M$, we may define a codimension 3 submanifold
 \[
     \mathcal{M} = \mathbb{P}\Big((D^{-2})^\perp\setminus (D^{-3})^\perp\Big)\subseteq \mathbb{P}T^*M.
 \]
 Let $\mathfrak{s}$ be the tautological (Liouville) one-form on $T^*M$; explicitly, for coordinates $(q^i)$ on $M$ with conjugate variables $p_i$, $\mathfrak{s}=\sum p_i\text{d}q^i$. Recall that $d\mathfrak{s}$ is the canonical symplectic form on $T^*M$. The form $\mathfrak{s}$ passes to a conformal class $\oline{\mathfrak{s}}$ of 1-forms on $\mathbb{P}T^*M$ which defines a contact structure.
 
 Since $\text{rank}(D^{-2})=3$, the submanifold $\mathcal{M}$ has codimension 3 in contact manifold $\mathbb{P}T^*M$. Restricting the contact forms $\oline{\mathfrak{s}}$ to $\mathcal{M}$ gives a hyperplane distribution \begin{equation} \label{evencontact_H} H=\ker\left(\oline{\mathfrak{s}}|_{\mathcal{M}}\right) \end{equation}
 with a conformal class of skew-symmetric forms $\oline{\sigma}=d\oline{\mathfrak{s}}|_{H}$ well-defined on this hyperplane distribution. Since $H$ is a hyperplane distribution, it has rank $2n-5$, so the kernel of the form $\oline\sigma$ must have odd rank. In \cite{doubrov2009local}, the authors show that $\ker(\oline{\sigma})$ has the minimal rank of 1, so that $\mathcal{M}$ is equipped with a so-called {\it even contact structure}. We shall write $\mathcal{C}$ for the line distribution $\ker(\oline{\sigma})$, called the {\it characteristic line distribution} of $D$. The integral curves of $\mathcal{C}$ are called {\it regular abnormal extremals} of the distribution $D$, and their projections onto $M$ are {\it regular abnormal extremal trajectories} (\cite{Liu-Sussmann,Agrachev-Sarychev, Zelenko99}). For brevity, we omit `regular' when referring to such curves. From this viewpoint, $\mathcal{M}$ can be viewed as the space of pointed regular abnormal extremal trajectories.

 \subsection{Maximality of Class}
 \label{sectionSympD}
Let $\pi:\mathcal M\to M$ be the canonical projection.
 The lift of $D$ to $\mathcal{M}$ is denoted by:
 \begin{equation}
 \label{J0}
     \mathcal{J}(\lambda) = \big\{v\in T_\lambda \mathcal{M} : \pi_*(v)\in D(\pi(\lambda))\big\},
 \end{equation}
 which is a distribution of rank $n-2$. Osculating with the characteristic line distribution $\mathcal{C}$, we obtain from $\mathcal{J}$ a flag at each point of $\mathcal{M}$. Write $\mathcal{J}^{(0)} = \mathcal{J}$ and define recursively
 \begin{equation}
     \mathcal{J}^{(i)}(\lambda) = \mathcal{J}^{(i-1)}(\lambda) + [\mathcal{C}, \mathcal{J}^{(i-1)}](\lambda)\quad \text{for}\ i\geq 1
 \end{equation}
 In the paper \cite{doubrov2009local}, the authors show that for each $0\leq i$ and each $\lambda\in \mathcal{M}$, we have 
 \begin{equation}
 \label{jump}
 \text{dim}\big(\mathcal{J}^{(i+1)}(\lambda)\big) - \text{dim}\big(\mathcal{J}^{(i)}(\lambda)\big) \leq 1.
 \end{equation}
 so that 
 \begin{equation}
 \label{max_dim_i}
 \dim
 \mathcal{J}^{(i)}(\lambda) \leq n-2+i.
 \end{equation}
 For the remainder of the paper, we restrict our considerations to the situation where the flag $\{\mathcal{J}^{(i)}(\lambda)\}$ is as complete as possible, in the following sense: 
 Let $H$ be as in \eqref{evencontact_H}. Since $\mathcal{C}$ is a Cauchy characteristic of $H$, and since $\mathcal{J}$ is contained within this distribution, so too is each $\mathcal{J}^{(i)}$. Hence 
 \begin{equation}
 \label{max_dim}
 \dim\mathcal{J}^{(i)}(\lambda)\leq  \text{rank}(H)= 2n-5. 
 \end{equation}
 Define the integer-valued functions on $\mathcal{M}$ and $M$, respectively:
 \begin{gather}
     \nu(\lambda) = \min\{i\in \N: \mathcal{J}^{(i+1)}(\lambda) = \mathcal{J}^{(i)}(\lambda)\}
     \\
     m(q) = \max\{\nu(\lambda): \lambda\in \pi^{-1}(q)\}
 \end{gather}
 One can show that the set $\{\lambda\in \pi^{-1}(q): \nu(\lambda)=m(q)\}$ is nonempty and Zariski open in the fiber $\pi^{-1}(q)$. The value $m(q)$ is called the {\it class} of $D$ at $q$. 
 
 Note that \eqref{max_dim_i} and \eqref{max_dim} imply that $\nu(\lambda)\leq n-3$, and therefore $m(q)\leq n-3$. The equality  $\nu(\lambda)=n-3$ holds if and only if $\mathcal{J}^{(n-3)}(\lambda)=H(\lambda)$. In the case that $m(q) = n-3$, we say that $D$ is {\it of maximal class} at $q\in M$; we say $D$ is {\it of maximal class} if $D$ is of maximal class at each point $q$. If $D$ is of maximal class at $q$, then $D$ is of maximal class on a neighborhood of $q$. Proposition 3.4 of \cite{Zelenko_2006} demonstrates that germs of $(2,n)$ distributions of maximal class are generic. Recently in \cite{day2025canonicalframes}, we showed the following theorem, which is even stronger:
 
 \begin{theorem}
     Let $D$ be a bracket generating rank 2 distribution on an $n$-dimensional manifold $M$, $n>5$. If the cube of $D$ has dimension $5$, then $D$ is of maximal class at a generic point of $M$.
 \end{theorem}
 
 Now define
 \[
     \mathcal{R}_D=\{\lambda\in \mathcal{M}: \nu(\lambda)=n-3\}.
 \]
  In the case that $D$ has $5$-dimensional cube, $\mathcal{R}_D$ is open and dense in the space $\mathcal{M}$ of pointed regular abnormal extremal trajectories. We shall construct the {\it symplectification} $\Symp(D)$ of $D$ to be a rank 2 distribution on the manifold $\mathcal{R}_D$, which has dimension $2n-4$.

\subsection{The Construction of $\Symp(D)$}
 If $D$ has $5$-dimensional cube, then by \eqref{jump} for any $\lambda\in \mathcal{R}_D$, we have:
 \[
     \mathcal{J}(\lambda)\subseteq\mathcal{J}^{(1)}(\lambda)\subseteq \cdots\subseteq \mathcal{J}^{(n-3)}(\lambda) = H(\lambda)\subseteq T_\lambda\mathcal{R}_D
 \]
 has the property that $\text{rank}(\mathcal{J}^{(i+1)})= \text{rank}(\mathcal{J}^{(i)})+1$ for each $0\leq i \leq n-4$. We can complete this flag using $\oline{\sigma}$: for each $i\geq 1$ and each $\lambda\in \mathcal{R}_D$, define
 \[
     \mathcal{J}_{(i)}(\lambda) = \big\{v\in T_\lambda \mathcal{R}_D : \oline{\sigma}(v,w) = 0\ \forall\ w\in \mathcal{J}^{(i)}(\lambda)\big\},
 \]
 the skew complement of $\mathcal{J}^{(i)}(\lambda)$ with respect to $\oline\sigma$. It is easy to see that $\oline{\sigma}\left(\mathcal{J},\mathcal{J}\right) = 0$, so we obtain a complete flag:

 \begin{equation}
     \label{calJFlag}
     \mathcal{C}(\lambda)=\mathcal{J}_{(n-3)}(\lambda)\subseteq \cdots \subseteq \mathcal{J}_{(1)}(\lambda)\subseteq \mathcal{J}(\lambda)\subseteq \mathcal{J}^{(1)}(\lambda)\subseteq \cdots \subseteq \mathcal{J}^{(n-3)}(\lambda)=H(\lambda)\subseteq T_\lambda\mathcal{R}_D.
 \end{equation}
 For each $0 < i < (n-3)$, we have

 \[
     \dim\big(\mathcal{J}_{(i)}(\lambda)\big) = n-2-i\quad\text{and}\quad \dim\big(\mathcal{J}^{(i)}(\lambda)\big) = n-2+i.
 \]

 Define the {\it symplectification of $D$} to be the rank 2 distribution $\Symp\big(D\big)$ on $\mathcal{R}_D$ such that $\Symp\big(D\big)(\lambda) = \mathcal{J}_{(n-4)}(\lambda)$. 

\subsection{Involutivity Conditions on $\Symp(D)$}

The lemmas in this section are proven in \cite{doubrov2009local} and provide involutivity conditions on $\Symp(D)$ that determine its Tanaka symbol. In particular, involutive distributions on $\mathcal{R}_D$ are obtained by intersecting the flag \eqref{calJFlag} with the vertical distribution for $\pi:\mathcal{R}_D\to M$. 

At each $\lambda\in \mathcal{R}_D$, one can show that
 \[
    \mathcal{J}^{(1)}(\lambda) = \big\{v\in T_\lambda \mathcal{M} : \pi_*(v)\in D^{-2}(\lambda)\big\}
 \]
 which implies (with some computation) that $\mathcal{J}_{(1)}(\lambda) = \ker(T_\lambda\pi)\oplus \mathcal{C}(\lambda)$. Define $V_1(\lambda) = \ker(T_\lambda\pi)$, the vertical subspace over $\lambda$. For $i=0$ and for $2\leq i\leq n-4,$ define
 \begin{equation}
 \label{Splitting}
     V_i(\lambda) = \mathcal{J}_{(i)}(\lambda)\cap V_1(\lambda),
 \end{equation}
 the vertical component of the $(n-2-i)$-dimensional piece of the flag at $\lambda$. Observe that $V_0(\lambda)=V_1(\lambda)$. From \eqref{calJFlag}, one can also observe that for each $1\leq i\leq n-4$

 \[
    \mathcal{J}_{(i)}(\lambda) = V_i(\lambda)\oplus \mathcal{C}(\lambda),
 \]
 so in particular $\Symp(D) = \mathcal{C}\oplus V_{n-4}$. The $V_i$ also  satisfy involutivity conditions; in the following lemma, which is Lemma 2 in \cite{doubrov2009local}, recall that $V_0=V_1$.

\begin{prop}
\label{involutivity}
    For each $i\geq 0$, we have involutivity conditions
    \begin{gather}
    \label{invol1}
        [V_i,V_i] \subseteq V_i
        \\
    \label{invol2}
        [V_i,\mathcal{J}^{(i)}] \subseteq \mathcal{J}^{(i)}
    \end{gather}
\end{prop}

\subsection{The Tanaka Symbol of $\Symp(D)$}
\label{sectionSympSymbol}
In this section, we will demonstrate the following proposition, which determines the Tanaka symbol of $\Symp(D)$, then we will compute the Tanaka prolongation of this symbol.

\begin{prop}
    Let $D$ be a rank 2 distribution with $5$-dimensional cube on a manifold of dimension $n\geq 5$. The Tanaka symbol of $\Symp(D)$ at any point is isomorphic to the algebra $\langle X\rangle \rtimes \mathfrak{heis}_{2n-5}$, where the semidirect product structure is given by 
    \begin{equation}
        [X,\ve_i]=\ve_{i+1},\text{ for }1\leq i \leq 2n-7,\quad [X,\ve_{2n-6}]=0,\quad  [X,\eta] = 0.
    \end{equation}
\end{prop}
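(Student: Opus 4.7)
The plan is to identify the weak derived flag of $\Symp(D)$ with the flag \eqref{calJFlag} and then read off the Heisenberg bracket relations from the skew pairing induced by $d\mathfrak{s}$ on $H/\mathcal{C}$. Concretely, I will prove by induction on $k$ that
\[
 \Symp(D)^{-k}(\lambda) = \mathcal{J}_{(n-3-k)}(\lambda) \quad \text{for } 1\leq k\leq n-3,
\]
and
\[
 \Symp(D)^{-(n-3+j)}(\lambda) = \mathcal{J}^{(j)}(\lambda) \quad \text{for } 0\leq j\leq n-3,
\]
with $\Symp(D)^{-(2n-5)}(\lambda) = T_\lambda \mathcal{R}_D$ (and $\mathcal{J}^{(0)} = \mathcal{J}$). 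Since each successive term in \eqref{calJFlag} is a one-dimensional jump and the flag has total length $2n-5$, this identification already forces the graded dimensions $(2,1,1,\ldots,1)$ required of the symbol.

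The inductive step rests on two ingredients. First, the key inclusion $[\mathcal{C},\mathcal{J}_{(i)}]\subseteq\mathcal{J}_{(i-1)}$, dual to the definition $[\mathcal{C},\mathcal{J}^{(i-1)}]\subseteq\mathcal{J}^{(i)}$: applying $d(d\mathfrak{s})=0$ to triples $(c,v,w)$ with $c\in\mathcal{C}$, $v\in\mathcal{J}_{(i)}$, $w\in\mathcal{J}^{(i-1)}$, and exploiting that $\mathcal{C}\subseteq\ker(d\mathfrak{s}|_H)$ together with the skew-complementarity of $\mathcal{J}_{(i)}$ and $\mathcal{J}^{(i)}$, all but one term in Cartan's formula collapses, yielding $d\mathfrak{s}([c,v],w)=0$. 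Second, the contribution of the other generator $V_{n-4}\subseteq\Symp(D)$ is controlled by Lemma \ref{involutivity}: $V_{n-4}$ is one-dimensional so $[V_{n-4},V_{n-4}]=0$, and for each $V_{n-3-k}\supseteq V_{n-4}$ the involutivity \eqref{invol1} gives $[V_{n-4},V_{n-3-k}]\subseteq V_{n-3-k}\subseteq\mathcal{J}_{(n-3-k)}$. Matching lower bounds on dimensions come from maximality of class together with skew duality: on $H/\mathcal{C}$ the quotient $\mathcal{J}_{(i-1)}/\mathcal{J}_{(i)}$ is dual via $d\mathfrak{s}$ to $\mathcal{J}^{(i)}/\mathcal{J}^{(i-1)}$, so the lower flag jumps by one whenever the upper one does.

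With the flag identification in hand, I take $X\in\mathfrak{g}_{-1}$ to be the class of a local generator of $\mathcal{C}$, $\ve_1\in\mathfrak{g}_{-1}$ the class of a generator of $V_{n-4}$, and set $\ve_{i+1}:=[X,\ve_i]$ for $1\leq i\leq 2n-7$; by construction these generate the one-dimensional pieces $\mathfrak{g}_{-(i+1)}$. Since $\mathfrak{s}([c,h])=-d\mathfrak{s}(c,h)=0$ for $c\in\mathcal{C}$, $h\in H$, we have $[\mathcal{C},H]\subseteq H$, so the class of $[X,\ve_{2n-6}]$ in $\mathfrak{g}_{-(2n-5)}=T_\lambda\mathcal{R}_D/H$ vanishes and $[X,\ve_{2n-6}]=0$. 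Taking $\eta\in\mathfrak{g}_{-(2n-5)}$ to be the generator dual to $\mathfrak{s}$, the identity $\mathfrak{s}([v,w])=-d\mathfrak{s}(v,w)$ for $v,w\in H$ identifies the class of $[\ve_i,\ve_j]$ in $T_\lambda\mathcal{R}_D/H$ with $-d\mathfrak{s}(\ve_i,\ve_j)\eta$. Skew-orthogonality of the two halves of \eqref{calJFlag} forces this scalar to vanish unless $i+j=2n-5$, and non-degeneracy of the $d\mathfrak{s}$-pairing on skew-complementary quotients then yields nonzero $c_i$ with $[\ve_i,\ve_{2n-5-i}]=c_i\eta$. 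Finally $[X,\eta]\in\mathfrak{g}_{-(2n-4)}=0$ is automatic. After rescaling $\ve_i$ and $\eta$, these brackets identify $\mathfrak{m}(\lambda)$ with $\langle X\rangle\rtimes\mathfrak{heis}_{2n-5}$ carrying the described action of $X$.

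The main obstacle is the exact coincidence of the weak derived flag of $\Symp(D)$ with \eqref{calJFlag}: upper bounds follow directly from the key inclusion and Lemma \ref{involutivity}, but establishing the matching lower bounds at each step requires a careful duality argument interlacing maximality of class with the skew-complementary structure of the flag, so that both halves of \eqref{calJFlag} grow by exactly one dimension at each bracket with $\mathcal{C}$. Once this bookkeeping is in place, the Heisenberg relations and the Jordan-block structure of $\ad(X)$ fall out mechanically from the non-degeneracy of $d\mathfrak{s}$ on $H/\mathcal{C}$.
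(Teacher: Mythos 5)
Your overall strategy is the same as the paper's: identify the weak derived flag of $\Symp(D)$ with the flag \eqref{calJFlag}, take $X$ along $\mathcal{C}$ and $\ve_1$ along $V_{n-4}$, set $\ve_{i+1}=[X,\ve_i]$, and read off the remaining brackets. The flag identification, the Jordan-block action of $\ad(X)$, and the statement $[X,\ve_{2n-6}]=0$ are all handled correctly (the paper justifies the flag identification by citing \eqref{Splitting} and Lemma \ref{involutivity}; your Cartan-formula derivation of $[\mathcal{C},\mathcal{J}_{(i)}]\subseteq\mathcal{J}_{(i-1)}$ is a legitimate way to make that explicit).

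The genuine gap is in your treatment of the brackets $[\ve_i,\ve_j]$. The symbol bracket of $\bar\ve_i\in\mathfrak{g}_{-i}$ and $\bar\ve_j\in\mathfrak{g}_{-j}$ is the class of $[\tilde\ve_i,\tilde\ve_j]$ in $\mathfrak{g}_{-(i+j)}=\Symp(D)^{-(i+j)}/\Symp(D)^{-(i+j)+1}$, and for $i+j<2n-5$ this quotient is spanned by $\bar\ve_{i+j}$, not by $\bar\eta$. Your argument via $\mathfrak{s}([v,w])=-d\mathfrak{s}(v,w)$ and skew-orthogonality of the two halves of \eqref{calJFlag} only controls the component of $[\tilde\ve_i,\tilde\ve_j]$ in $T_\lambda\mathcal{R}_D/H$, i.e.\ the lowest graded piece $\mathfrak{g}_{-(2n-5)}$; it says nothing about the component in $\mathfrak{g}_{-(i+j)}$, which is what must vanish. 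This is exactly where the paper invokes Lemma \ref{involutivity}: for $i\le j\le n-4$ one has $[\tilde\ve_i,\tilde\ve_j]\in[V_{n-3-i},V_{n-3-i}]\subseteq V_{n-3-i}\subseteq\Symp(D)^{-i}\subseteq\Symp(D)^{-(i+j)+1}$, and for $j>n-4$ with $i+j\le 2n-6$ one uses $[V_{j-n+3},\mathcal{J}^{(j-n+3)}]\subseteq\mathcal{J}^{(j-n+3)}=\Symp(D)^{-j}$; you cite the lemma only for the flag computation and never deploy it here. A second, smaller issue: once you have fixed $\ve_{i+1}:=[X,\ve_i]$ you cannot rescale the $\ve_i$ independently, so ``nonzero $c_i$ plus rescaling'' does not by itself give the alternating signs $(-1)^i$. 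This is easily repaired — $\ad(X)$ applied to $[\ve_i,\ve_{2n-6-i}]=0$ forces $c_{i+1}=-c_i$ (the paper's induction using that $\widetilde X$ preserves $\mathcal{J}^{(n-3)}$), after which a single rescaling of $\eta$ suffices — but as written the normalization step is not justified.
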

\begin{proof}
    The weak derived flag and Tanaka symbol of $\Symp(D)$ can now be determined. Let $\Symp(D)^{-j}$ denote the $j$th piece of this weak derived flag. The decomposition \eqref{Splitting} and Proposition \ref{involutivity} imply 
    
    \[
        \Symp(D)^{-j} =
        \begin{cases}
            \mathcal{J}_{(n-3-j)} & \text{for}\ 1\leq j \leq n-4
            \\
            \\
            \mathcal{J}^{(j-n+3)} & \text{for}\ n-3\leq j \leq 2n-6
        \end{cases}
    \]
    
    \noindent Let $\tilde \ve_1$ and $\widetilde X$ be nonvanishing sections of $V_{n-4}$ and $\mathcal{C}$, respectively. Together, these vector fields form a frame for $\Symp(D)$. Now define recursively
    \[
        \widetilde \ve_i = \pi_i\Big([\widetilde X, \widetilde\ve_{i-1}]\Big)\ \text{for all}\ 2\leq i \leq n-4
        \quad \text{and}\quad 
        \widetilde \ve_i = [\widetilde X, \widetilde\ve_{i-1}]\ \text{for all}\ n-3\leq i \leq 2n-6
    \]
    where $\pi_i: \mathcal{J}_{(i)}\to V_{i}$ is the projection parallel to $\mathcal{C}$. Since $\Symp(D)$ is bracket generating, Proposition \ref{involutivity} gives
    \[
        V_k = \Big\langle \{\tilde \ve_\alpha\}_{\alpha=1}^{n-3-k}\Big\rangle\quad\text{for each}\ 1\leq k \leq n-4
    \]
    and
    \[
        \Symp(D)^{-j} = \Big\langle \widetilde X,\big\{\tilde \ve_{\alpha}\big\}_{\alpha=1}^{j}\Big\rangle\quad\text{for each}\ 1\leq j \leq 2n-6
    \]
    Proposition \ref{involutivity} gives relations
    \begin{gather}
    \label{TanakaRels1}
        [\widetilde X,\tilde \ve_\alpha] = \tilde \ve_{\alpha+1} \not\equiv 0 \pmod{\ \Symp(D)^{-\alpha}}\ \text{for}\ 1\leq \alpha \leq 2n-6
        \\
    \label{TanakaRels2}
        [\tilde \ve_\alpha,\tilde \ve_\beta] \equiv 0\pmod{\Symp(D)^{-\alpha - \beta}}\  \text{for}\ \alpha+\beta\leq 2n-6
    \end{gather}
    
    The final piece $\mathcal{J}^{(n-3)}=H(\lambda)$ is the even contact distribution on $\mathcal{M}$. Therefore, an additional contact vector field must be added to $\Symp(D)^{6-2n} = \mathcal{J}^{(n-3)}$ in order to form a frame on $\mathcal{M}$. To this end, define $\tilde\eta=-[\tilde \ve_1,\tilde \ve_{n-6}]$.  Indeed $\tilde \eta$ is not contained in $\mathcal{J}^{(n-3)} = \ker(\oline{\mathfrak{s}})$, since 
    
    \[
        \oline{\mathfrak{s}}([\tilde \ve_1,\tilde \ve_{n-6}]) = 0 \Longleftrightarrow \oline{\sigma}(\tilde \ve_1,\tilde \ve_{n-6}) = 0.
    \]
    
    For if the right-hand side holds, then $\tilde \ve_1$ is in $(\mathcal{J}^{n-3})^\angle = \langle \widetilde X\rangle$, which contradicts our choice of $\tilde\ve_1$. Hence, $(\widetilde X,\tilde \ve_1,\ldots, \tilde \ve_{2n-6},\tilde \eta)$ forms an adapted frame for the distribution $\Symp(D)$ on $\mathcal{R}_D$.
    
    Since $\widetilde X$ preserves $\mathcal{J}^{(n-3)}$, we have for any $1\leq \alpha\leq n-4$ that
    \begin{gather}
        \ad(\widetilde X)\big([\tilde\ve_\alpha,\tilde\ve_{2n-6-\alpha}]\big) = [\tilde\ve_{\alpha+1},\tilde\ve_{2n-6-\alpha}]+[\tilde\ve_{\alpha},\tilde\ve_{2n-5-\alpha}]\equiv 0\pmod{\Symp(D)^{6-2n}}
    \end{gather}
    Inducting on $\alpha$, we have for each $1\leq \alpha \leq n-3$ that
    \begin{equation}
    \label{TanakaRels3}
    [\tilde\ve_{\alpha},\tilde \ve_{2n-5-\alpha}] \equiv (-1)^{\alpha}\tilde \eta \pmod{\Symp(D)^{6-2n}}.
    \end{equation}
    Using the appropriate moduli, we define a basis for the Tanaka symbol
    \begin{gather}
        \oline X = \widetilde X,
        \\
        \oline \ve_\alpha=\tilde \ve_\alpha\ \pmod{ \Symp(D)^{1-\alpha}}\ \text{for}\ 1\leq \alpha\leq 2n-6
        \\
        \oline \eta = \tilde\eta\pmod{\Symp(D)^{6-2n}}.
    \end{gather}
    We have shown in \eqref{TanakaRels1}, \eqref{TanakaRels2}, and \eqref{TanakaRels3} that the only nontrivial relations in the Tanaka symbol are
    \begin{gather}
        [\oline X,\oline\ve_\alpha] = \oline\ve_{\alpha+1}\ \text{for}\ 1\leq \alpha\leq 2n-7\ \text{and}
        \\
        [\oline\ve_\alpha,\oline\ve_{2n-5-\alpha}] = (-1)^\alpha\oline\eta\ \text{for}\ 1\leq \alpha \leq n-3
    \end{gather}
    These are the desired relations, so the proposition is proven.
\end{proof}

We now define a semidirect product structure for $\mathfrak{gl}_2(\R)\rtimes\mathfrak{heis}_{2n-5}$. Let $\{Y,H,E,X\}$ be the standard basis of $\mathfrak{gl}_2(\R)$; this basis has nontrivial brackets
\[
    [X,Y] = H,\quad [H,X] = 2X,\quad [H,Y]=-2Y.
\]
Let $\mathfrak{heis}_{2n-5}$ be the $(2n-5)$-dimensional Heisenberg algebra with basis $\{\ve_1,\ldots, \ve_{2n-6},\eta\}$ and nontrivial brackets
\[
    [\ve_i,\ve_{2n-5-i}]=(-1)^i\eta \quad \text{for}\ 1\leq i \leq 2n-6
\]
Let $\mathfrak{gl}_2(\R)\rtimes \mathfrak{heis}_{2n-5}$ be the semidirect product structure defined by the algebra homomorphism $\phi:\mathfrak{gl}_2(\R)\to \mathfrak{der}(\mathfrak{heis}_{2n-5})$ given by
\begin{align}
    &\phi(H):\begin{cases} 
        \ve_i\mapsto (2i+5-2n)\ve_i,\ 1\leq i \leq 2n-5
        \\
        \eta\mapsto 0
    \end{cases}
    &&\phi(E):\begin{cases}
        \ve_i\mapsto \ve_i,\ 1\leq i \leq 2n-5
        \\
        \eta\mapsto 2\eta
    \end{cases}
    \\ 
    &\phi(X):\begin{cases}
        \ve_i\mapsto \ve_{i+1},\ 1\leq i \leq 2n-7
        \\ 
        \ve_{2n-6}\mapsto 0
        \\
        \eta\mapsto 0
    \end{cases}
    &&\phi(Y):\begin{cases}
        \ve_1\mapsto 0
        \\
        \ve_i\mapsto (i-1)(2n-5-i)\ve_{i-1},\ 2\leq i \leq 2n-6
        \\
        \eta\mapsto 0
    \end{cases}
\end{align}
Note that if $\mathcal E=\langle\ve_1, \ldots, \ve_{2n-6}\rangle$ then the map $a\mapsto \phi(a) |_{\mathcal E}$, $a\in \mathfrak{sl}_2(\R)$ defines the irreducible $\mathfrak{sl}_2(\R)$-representation on $\mathcal E$ so that $\phi(a)|_{\mathcal{E}}$ is an element of the symplectic algebra $\mathfrak{sp}(\mathcal E)$ with respect to the symplectic form $\omega$ on $\mathcal E$ defined by $[x, y]=\omega(x,y)\eta, \forall x, y\in \mathcal E$.

\begin{prop}
\label{Tanaka Symbol}
    Let $D$ be a rank 2 distribution with $5$-dimensional cube on an $n$-dimensional manifold, $n\geq 5$. 
    \begin{enumerate}
        \item If $n=5$, then the universally prolonged Tanaka symbol of $\Symp(D)$ is isomorphic to the split real form of the exceptional Lie algebra $G_2$ with grading obtained by marking both simple roots.
        \item If $n\geq 6$, then the universally prolonged Tanaka symbol of $\Symp(D)$ is isomorphic to $\mathfrak{gl}_2(\R)\rtimes \mathfrak{heis}_{2n-5}$ with the semidirect product structure defined by $\phi$ and with grading 
        \begin{gather}
        \label{SympGrading}
            \text{wght}(Y)=1,\quad \text{wght}(H)=\text{wght}(E)=0,\quad\text{wght}(X)=-1,
            \\
            \text{wght}(\ve_i)=-i,\quad \text{wght}(\eta) = -2n+5
        \end{gather}
    \end{enumerate}
\end{prop}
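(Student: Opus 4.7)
The plan is to directly compute the universal Tanaka prolongation $\mathfrak{g}(\mathfrak{m})$ via the recursive formula \eqref{Tanaka prolongation formula}, using Definition \ref{universal_prol_def}(ii) to close out the process at the right degree. The two cases of the proposition turn on a numerical coincidence at $n=5$ that lets the prolongation continue past degree $1$, whereas for $n\geq 6$ it terminates there.

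For $n\geq 6$ I would first construct $\mathfrak{gl}_2(\R)\rtimes\mathfrak{heis}_{2n-5}$ with the grading \eqref{SympGrading} and verify that it is a graded Lie algebra whose negative part is $\mathfrak{m}$ (routine, since $\phi$ takes values in grade-preserving derivations of $\mathfrak{heis}_{2n-5}$) and whose non-negative part satisfies the universality condition in Definition \ref{universal_prol_def}(ii): $[H,X]=2X$ and $[E,\ve_1]=\ve_1$ handle $\mathfrak{g}_0$, while $[Y,X]=-H$ handles $\mathfrak{g}_1$. This exhibits the claimed algebra as a graded Lie subalgebra of $\mathfrak{g}(\mathfrak{m})$. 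To upgrade inclusion to equality, I would compute each non-negative graded piece of $\mathfrak{g}(\mathfrak{m})$ explicitly. A weight-$0$ derivation, parameterized by $(a,b,c,d)$ via $D\bar X=aX+b\ve_1$ and $D\bar\ve_1=cX+d\ve_1$, is killed to $b=c=0$ by the derivation property applied to $[\bar X,\bar\ve_{2n-6}]=0$ (using $[\ve_1,\ve_{2n-6}]=-\bar\eta$) and $[\bar\ve_1,\bar\ve_k]=0$ for $k\leq 2n-7$, giving $\mathfrak{g}_0=\langle H,E\rangle$. A weight-$1$ derivation $\varphi$, parameterized by $\varphi\bar X=\alpha H+\beta E$ and $\varphi\bar\ve_1=\gamma H+\delta E$, satisfies two linear relations on $(\gamma,\delta)$ arising from the Heisenberg identities at $i=1$ and $i=3$ whose determinant is $20-4n$, forcing $\gamma=\delta=0$ when $n\geq 6$; the crucial constraint $\varphi[\bar X,\bar\ve_{2n-6}]=0$ then forces $\beta=0$, so $\mathfrak{g}_1=\langle Y\rangle$.

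The main obstacle is the termination step $\mathfrak{g}_2=0$. For $\varphi\in\mathfrak{g}_2$, write $\varphi\bar X=\lambda Y$ and $\varphi\bar\ve_1=\mu Y$. The identity $\varphi[\bar\ve_1,\bar\ve_3]=0$ unpacks to $2\mu(2n-7)=0$, so $\mu=0$. With $\mu=0$ in hand, the iterated brackets $[\bar X,\bar\ve_k]=\bar\ve_{k+1}$ give $\varphi\bar\ve_k=c_k\lambda\,\bar\ve_{k-2}$ for $k\geq 4$, where $c_k-c_{k-1}=(k-2)(2n-4-k)$ with initial value $c_4=6n-23$, making $c_{2n-6}$ a manifestly positive integer. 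The constraint $\varphi[\bar X,\bar\ve_{2n-6}]=0$ then produces $\lambda(2n-7+c_{2n-6})=0$, forcing $\lambda=0$. So $\varphi=0$, and $\mathfrak{g}_k=0$ for all $k\geq 2$ follows by induction on $k$ using Definition \ref{universal_prol_def}(ii), since a weight-$k$ derivation maps $\mathfrak{g}_{-1}$ into $\mathfrak{g}_{k-1}=0$.

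For $n=5$, the degeneracy $20-4n=0$ frees an extra parameter in $\mathfrak{g}_1$, making it two-dimensional, and the prolongation continues up through $\mathfrak{g}_5$. I would either carry out the analogous computations degree by degree or, more efficiently, appeal to the classical identification---as in Cartan's five-variable paper \cite{FiveVariables} or Yamaguchi's work on prolongations of $G_2$-type symbols---that the full universal prolongation is the split real form of $G_2$ with the grading obtained by marking both simple roots.
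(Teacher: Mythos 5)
Your proposal is correct, but it takes a genuinely different route from the paper. The paper explicitly declines to do the direct prolongation computation for $n\geq 6$ ("we omit it for the sake of brevity") and instead argues indirectly: it invokes Theorem~3 of \cite{doubrov2009local}, which identifies the unique maximally symmetric $(2,n)$ germ of maximal class and its symmetry algebra $\mathfrak{gl}_2(\R)\rtimes\mathfrak{heis}_{2n-5}$, combines this with the general fact that the infinitesimal symmetry algebra of the flat model $D(\mathfrak{m})$ equals $\mathfrak{g}(\mathfrak{m})$, and then transports the conclusion along iterated Cartan prolongations via Lemma~\ref{prolongation_synbol_lemma} to reach the symbol $\mathfrak{s}^{n-4,n}$ of $\Symp(D)$. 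You instead carry out the degree-by-degree derivation computation the paper skips, and your key numerical claims check out: I verified that $\mu$ is killed by $\varphi([\ve_1,\ve_3])=0$ via the coefficient $2(2n-8)+2=4n-14$, that $c_3=2n-7$, $c_4=6n-23$, the recursion $c_k-c_{k-1}=(k-2)(2n-4-k)$ with all increments positive in the relevant range, and the final relation $\lambda(2n-7+c_{2n-6})=0$ forcing $\mathfrak{g}_2=0$; the induction $\mathfrak{g}_{k-1}=0\Rightarrow\mathfrak{g}_k=0$ via Definition~\ref{universal_prol_def}(ii) is standard. The only wobble is in $\mathfrak{g}_1$: the pair of relations you attribute to "the Heisenberg identities at $i=1$ and $i=3$" is not quite the pair that produces a determinant proportional to $n-5$ (the relations $[\ve_1,\ve_2]=0$ and $[\ve_1,\ve_3]=0$ actually coincide, both giving $\gamma(11-2n)+\delta=0$); the cleanest killing of $(\gamma,\delta)$ for $n\geq 6$ uses $[\ve_1,\ve_2]=0$ together with $[\ve_1,\ve_4]=0$, the latter being exactly the relation that is unavailable at $n=5$ where $[\ve_1,\ve_4]=-\eta$. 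This is a bookkeeping issue, not a gap: the degeneration at $n=5$ that you describe is real and is what admits the extra element of $\mathfrak{g}_1$ leading to $G_2$. The trade-off between the two approaches: yours is self-contained and elementary, while the paper's leverages the earlier classification result and simultaneously yields the more general Lemma~\ref{prolong_k_lemma} on $\mathfrak{g}(\mathfrak{s}^{k,n})$ for all $k$, which is needed again in Section~\ref{sectionNonexistence}. For $n=5$ you and the paper both defer to the classical $G_2$ identification.
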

\begin{proof}
    For the case $n=5$, observe that the Tanaka symbol $\langle X \rangle \rtimes \mathfrak{heis}_5$ is isomorphic to the negative graded part of the exceptional Lie algebra $G_2$ with the given grading. Theorem 5.3 in \cite{Yamaguchi_1993} then gives that the universal prolongation is isomorphic to $\mathfrak{g}_2$.

    While the case $n\geq 6$ amounts to a direct computation from the definition of the universal Tanaka prolongation, we omit it for the sake of brevity. Instead, we provide a proof of a more general statement in Lemma \ref{prolong_k_lemma} below based on \cite[Theorem 3]{doubrov2009local}.
\end{proof}
In the sequel, we call the graded Lie algebra \eqref{SympGrading} the {\it prolonged symplectic symbol}.

\begin{remark}
One may wonder if the distribution $\Symp(D)$ is an arbitrary distribution with Tanaka symbol isomorphic to the symplectic symbol. In fact, the only constraints on $\Symp(D)$ are those given in Propositions \ref{involutivity} and \ref{Tanaka Symbol}. The involutivity condition \eqref{invol2} with $i=0$ is equivalent to the statement that $\text{Char}\big(\Symp(D)^{3-n}\big)$ has rank $n-4$. Given a distribution $\Delta$ with Tanaka symbol equal to the symplectic symbol such that $\text{Char}(\Delta^{3-n})$ has rank $n-4$, one can quotient by the characteristic. In the quotient, $\Delta^{3-n}$ gives a $(2,n)$ distribution with $5$-dimensional cube whose symplectification is again $\Delta$.

Furthermore, in the case $n=6$, the involutivity conditions of Proposition \ref{involutivity} are satisfied by any nonholonomic $(2,8)$ distribution $\Delta$ with constant Tanaka symbol as in Proposition \ref{Tanaka Symbol}. To see this, observe that the Lie bracket gives a skew-symmetric 2-form

\[
    \big(\Delta^{-3}\wedge\Delta^{-3}\big)\to \big(\Delta^{-4}/\Delta^{-3}\big)
\]
whose kernel $K$ meets $\Delta$ nontrivially. Since $K$ is even-dimensional, it must have dimension 2. Choose 
\begin{gather}
    V_0=V_1=K,
    \quad 
    V_2=\Delta\cap K,
    \quad \text{and}
    \quad
    \mathcal{J}^\alpha = \Delta^{-\alpha-3}\ \text{for}\ 0\leq \alpha\leq 2.
\end{gather} 
Viewing $V_1=K=\text{Char}(\Delta^{-3})$, we can see that $V_1$ and $V_2$ are involutive. Since $\mathcal{J}^1=\Delta^{-4}$
\[
    [V_1, \mathcal{J}^1] = \Big[V_1,\Delta^{-3}+[\Delta,\Delta^{-3}]\Big] = [V_1,\Delta^{-3}] + \Big[\Delta,[V_1,\Delta^{-3}]\Big] \subseteq \Delta^{-4} = \mathcal{J}^1.
\]
Hence, the involutivity conditions of Proposition \ref{involutivity} are satisfied for $i=0,1,2$. However, for $n>6$, the involutivity conditions are not satisfied in general.
\end{remark}

\subsection{Existence of Invariant Normalization Condition on $\Symp(D)$}
\label{sectionNormalization}
 We now use the work of Morimoto in \cite{morimoto1993geometric} to demonstrate the existence of a linear invariant normalization condition for the symplectic symbol $\mathfrak{gl}_2(\R)\rtimes\mathfrak{heis}_{2n-5}$ defined in section \ref{sectionSympSymbol} with grading \eqref{SympGrading}.

 If the Tanaka symbol of a distribution is semisimple, one can use the Killing form to construct an inner product with invariance properties; the cochains that are coclosed with respect to this inner product then comprise an invariant normalization condition. Morimoto's criterion, stated below, offers a slight generalization of the semisimple case. 
 
 Proposition 3.10.1 and Theorem 3.10.1 from \cite{morimoto1993geometric} give the following:

 \begin{prop}[Morimoto, 1992]
 \label{Morimoto}
 Let $\mathfrak{m} = \oplus_{p < 0}\mathfrak{m}_p$ be a graded Lie algebra with finite dimensional Tanaka prolongation $\mathfrak{g}$. Let $\mathfrak{g}^0$ be the non-negatively graded part of its Tanaka prolongation, and let $\mathfrak{k}\subseteq \mathfrak{g}^0$ be a subalgebra.

 Assume there exists a positive definite symmetric bilinear form $(\ ,\ )$ on $\mathfrak{g}$ and a linear map $\tau: \mathfrak{k}\to \mathfrak{g}$ satisfying
 \begin{enumerate}
     \item $(\mathfrak{g}_p,\mathfrak{g}_q) = 0$ if $p\neq q$
     \item $\tau(\mathfrak{g}_p)\subseteq \mathfrak{g}_{-p}$ for all $p\geq 0$.
     \item $\big([A,x],y\big) = \big(x,[\tau(A),y]\big)$ for all $x,y\in \mathfrak{g},A\in \mathfrak{k}$
 \end{enumerate}
 Then there exists a linear $\mathfrak{k}$-invariant normalization condition for $\mathfrak{g}$. 
 \end{prop}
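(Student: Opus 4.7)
The plan is to build $\mathcal{N}$ as the kernel of a codifferential, following the classical Hodge-theoretic approach. First, extend $(\cdot,\cdot)$ to each cochain module $C^k(\mathfrak{m},\mathfrak{g})=\Exterior^k\mathfrak{m}^*\otimes\mathfrak{g}$: condition (1) makes the restriction of $(\cdot,\cdot)$ to $\mathfrak{m}\subset\mathfrak{g}$ positive definite, which induces a dual inner product on $\mathfrak{m}^*$, then on each exterior power, and tensoring with $(\cdot,\cdot)$ on the coefficient $\mathfrak{g}$ gives a positive definite inner product on $C^k$. Let $\partial^*$ denote the adjoint of the coboundary $\partial$ with respect to these inner products. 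Since $\partial$ preserves the weight grading, so does $\partial^*$, and in each weight we obtain the orthogonal decomposition $C^k_w=\ker(\partial^*)_w\oplus\im(\partial)_w$. Define $\mathcal{N}:=\ker(\partial^*)\cap C^2_+(\mathfrak{m},\mathfrak{g})$; by construction $C^2_+=\mathcal{N}\oplus\im(\partial|_{C^1_+})$, and $\mathcal{N}$ inherits a weight grading.

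It remains to show $\mathcal{N}$ is $\mathfrak{k}$-invariant. The Lie-derivative action $L_X$ of any $X\in\mathfrak{g}$ on $C^*(\mathfrak{m},\mathfrak{g})$ commutes with $\partial$: for $X\in\mathfrak{g}^0$ this is the standard naturality statement (rooted in $\mathfrak{g}^0$ normalizing $\mathfrak{m}$ together with Jacobi), and for $X\in\mathfrak{m}$ it follows from Cartan's homotopy formula $L_X=\partial\iota_X+\iota_X\partial$. The key computation is then the adjoint identity $L_A^*=L_{\tau(A)}$ on $C^*(\mathfrak{m},\mathfrak{g})$ for every $A\in\mathfrak{k}$. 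Granted this, since $L_{\tau(A)}$ commutes with $\partial$ it preserves $\im(\partial)$; being the adjoint of $L_A$, it forces $L_A$ to preserve the orthogonal complement $\ker(\partial^*)$. Intersecting with the $L_A$-invariant graded subspace $C^2_+$ yields $L_A(\mathcal{N})\subseteq\mathcal{N}$, as required.

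The main obstacle is therefore the identity $L_A^*=L_{\tau(A)}$. On the coefficient factor $\mathfrak{g}$, condition (3) says exactly that the adjoint of $[A,\cdot]$ is $[\tau(A),\cdot]$. The $\mathfrak{m}^*$-factor is the real work: fixing an orthonormal basis $\{e_i\}$ of $\mathfrak{m}$, expanding $[A,e_i]$ and $[\tau(A),e_i]$ in that basis, and applying condition (3) restricted to $\mathfrak{m}$, one sees that the matrix coefficients of the coadjoint action of $A$ and of $\tau(A)$ on $\mathfrak{m}^*$ are mutual transposes. After a careful index-swap, combining this with the $\mathfrak{g}$-factor assembles into the desired adjoint identity on all of $C^*(\mathfrak{m},\mathfrak{g})$. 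A minor bookkeeping point is that $\tau$ sends $\mathfrak{g}_p$ to $\mathfrak{g}_{-p}$, so $L_A$ and $L_{\tau(A)}$ shift weights in opposite directions; this is harmless because $\ker(\partial^*)$ and $\im(\partial)$ are graded subspaces, so $L_A$-invariance of $\mathcal{N}$ may be verified one weight at a time.
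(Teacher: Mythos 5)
The paper does not prove this proposition at all: it is quoted verbatim as Proposition~3.10.1 of Morimoto's \emph{Geometric structures on filtered manifolds} and used as a black box, so there is no in-paper argument to compare against. Your proof is, in substance, Morimoto's own: define the inner product on $C^k(\mathfrak{m},\mathfrak{g})$ from condition~(1), set $\mathcal{N}=\ker(\partial^*)\cap C^2_+$, and derive $\mathfrak{k}$-invariance from the adjoint identity $L_A^*=L_{\tau(A)}$ supplied by condition~(3). The skeleton is sound: $\partial$ is weight-preserving, so the orthogonal splitting $C^2=\ker(\partial^*)\oplus\im(\partial)$ restricts weight-by-weight to give $C^2_+=\mathcal{N}\oplus\partial(C^1_+)$, and the invariance argument $(L_Ac,\partial b)=(c,L_{\tau(A)}\partial b)=(c,\partial L_{\tau(A)}b)=0$ is exactly right. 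One assertion you make is both overclaimed and, in the form stated, false: $L_X$ does \emph{not} commute with $\partial$ for all $X\in\mathfrak{g}^0$, and your justification (``$\mathfrak{g}^0$ normalizing $\mathfrak{m}$'') fails for the positively graded part, since $[\mathfrak{g}_1,\mathfrak{g}_{-1}]\subseteq\mathfrak{g}_0\not\subseteq\mathfrak{m}$ --- this is precisely why parabolic geometry uses the Kostant codifferential rather than $\ker$ of the metric adjoint of $\partial$. Fortunately your argument never needs commutation for $X\in\mathfrak{g}_+$: by condition~(2), $\tau(A)$ always lies in $\mathfrak{g}_0\oplus\mathfrak{m}$, where naturality (for $\mathfrak{g}_0$, which genuinely does normalize $\mathfrak{m}$) and the Cartan homotopy formula (for $\mathfrak{m}$) suffice. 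You should restrict the claim accordingly; with that repair the proof is complete and matches the cited source.
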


 Choosing $\mathfrak{k}$ to be the non-negative part of the prolonged Tanaka symbol, Morimoto's criterion is easily satisfied for $\mathfrak{g}=\mathfrak{gl}_2(\R)\rtimes \mathfrak{heis}_{2n-5}$ for both the grading \eqref{Unification Theorem} and the grading \eqref{SympGrading}. Let $(\ ,\ )$ be the inner product on $\mathfrak{g}$ such that $\{Y,H,E,X,\ve_1,\ldots,\ve_{2n-6},\eta\}$ is an orthogonal basis for $\mathfrak{g}$, and 
 \begin{gather}
    |Y|^2 = |X|^2 = |\eta|^2 = 1,\quad |H|^2=|E|^2 = 2,\quad \text{and}\quad |\ve_i|^2 = \frac{(i-1)!}{(2n-6-i)!}
 \end{gather}
 Let $\tau:\langle Y,H,E\rangle\to \mathfrak{g}$ be the linear map defined by
 \[
    \tau: \begin{cases}
    Y\mapsto X
    \\
    H\mapsto H
    \\
    E\mapsto E
    \end{cases}
 \]
 One can easily check that $\tau$ and $(\ ,\ )$ satisfy Morimoto's criterion, thus proving the following theorem.

 \begin{theorem}
 \label{Existence Theorem}
     For each $n>5$, the graded Lie algebra $\mathfrak{g} = \mathfrak{gl}_2(\R)\rtimes \mathfrak{heis}_{2n-5}$ with grading as in \eqref{Unification Theorem} has a linear invariant normalization condition.
 \end{theorem}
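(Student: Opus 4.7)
The plan is to apply Morimoto's criterion (Proposition \ref{Morimoto}) directly: the bilinear form $(\,,\,)$ and the linear map $\tau$ have already been written down, and the entirety of the proof is a verification that these satisfy the three hypotheses of Proposition \ref{Morimoto} with $\mathfrak{k}$ taken to be the non-negative part $\mathfrak{g}^0$ of the prolongation. Since $\mathfrak{g}^0 = \langle Y, H, E\rangle$ in this grading (as can be read off from Proposition \ref{Tanaka Symbol}), any $\mathfrak{k}$-invariant normalization condition obtained is in fact $\mathfrak{g}^0$-invariant, which is what is required.

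First I would dispose of Morimoto's conditions (1) and (2) by inspection. Condition (1) is immediate from the construction of $(\,,\,)$: the declared orthogonal basis $\{Y,H,E,X,\ve_1,\dots,\ve_{2n-6},\eta\}$ consists of weight-homogeneous vectors, with the two weight-zero vectors $H$ and $E$ orthogonal to each other, so distinct weight spaces pair trivially. Condition (2) is a bookkeeping check against the grading \eqref{SympGrading}: $\tau$ sends the weight-$1$ vector $Y$ to the weight-$(-1)$ vector $X$ and fixes the weight-$0$ vectors $H$ and $E$.

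Condition (3) is the only substantive step, and by bilinearity it suffices to check it on ordered basis pairs $(x,y)$. Weight-matching collapses most pairs to $0=0$, leaving only a handful of families to verify. For $A=H$ the invariance $([H,x],y)=(x,[H,y])$ holds because $H$ acts diagonally in the chosen basis, so pairing is preserved weight-by-weight; for $A=E$ the action is scalar on $\mathcal{E}$ and doubling on $\eta$, giving the identity immediately; the internal checks inside the $\mathfrak{gl}_2(\R)$-triple $(Y,H,X)$ reproduce standard self-adjointness of $\mathfrak{sl}_2$ with the split Cartan involution. The one nontrivial identity is the $\tau(Y)=X$ case applied to $\ve$-vectors, which reduces to
\[
    (i-1)(2n-5-i)\,|\ve_{i-1}|^2 \;=\; |\ve_i|^2 \quad \text{for } 2\leq i\leq 2n-6.
\]
The normalization $|\ve_i|^2=(i-1)!/(2n-6-i)!$ was manifestly engineered to make this hold, and a short factorial manipulation confirms it. The cases involving $\eta$ and $\ve_{2n-5-i}$ follow from $[X,\eta]=[Y,\eta]=0$ together with the previously verified $\ve$-identity.

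The only place where anything could go wrong is this last factorial identity, which is where the specific choice of norms on the $\ve_i$ is forced; all the remaining conditions are essentially free from the very definition of $\tau$ and of $(\,,\,)$. Once (1)-(3) are in place, Proposition \ref{Morimoto} produces the desired linear invariant normalization condition and Theorem \ref{Existence Theorem} follows.
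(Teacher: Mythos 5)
Your proposal is correct and follows exactly the paper's route: the paper takes $\mathfrak{k}=\mathfrak{g}^0=\langle Y,H,E\rangle$, exhibits the same orthogonal basis with $|\ve_i|^2=(i-1)!/(2n-6-i)!$ and the same $\tau$, and then asserts that Morimoto's conditions (1)--(3) are "easily checked." Your write-up simply supplies the verification the paper omits, correctly isolating the only nontrivial point, namely the identity $(i-1)(2n-5-i)\,|\ve_{i-1}|^2=|\ve_i|^2$ forced by the choice of norms.
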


 Similarly, for the case $n=5$, the universally prolonged Tanaka symbol of $\Symp(D)$ is the exceptional Lie algebra $G_2$ with grading obtained by marking both simple roots. Several sources including \cite{Tanaka1979} and \cite{CapSlovak} demonstrate that this Tanaka symbol admits a linear invariant normalization condition.

 In light of Proposition \ref{Tanaka Symbol} and Theorem \ref{Cartan Connection}, we now have a method for assigning a Cartan geometry to each $(2,n)$ distribution with $5$-dimensional cube and $n\geq 5$. Specifically, we should use the symplectification $\Symp(D)$, which has constant symbol as in Proposition \ref{Tanaka Symbol}. With the normalization condition from the above theorem, we can apply Theorem \ref{Cartan Connection} to obtain a Cartan geometry from which $D$ can be recovered.

 \section{The Symplectification Procedure via Cartan Prolongations}\label{Symp Via Cartan}
 
 \subsection{Unparameterized Jet Spaces}
 The foliations provided by the flag of involutive distributions $V_i$ separate abnormal extremal trajectories according to their jets. In order to make these notions rigorous, we now construct the manifold of unparameterized jets; that is, the orbit space of the jet bundle under the action of reparameterization. For each $i>0$, consider the subspace
    \[
        U^i(M) = \Big\{j^i_0(\gamma): \gamma\in C^\infty (\R,M)\ \text{with}\ \gamma'(0)\neq 0\Big\} \subseteq J^i_0(\R,M),
    \]
    where $J^i_0(\R,M)$ is the space of jets of maps $\R\to M$ evaluated at $0\in \R$. Consider the group of $i$-jets of smooth reparameterizations fixing the origin evaluated at $0\in \R$
    \[
        G^i:=\{j^i_0(\varphi): \varphi\in C^\infty(\R,\R), \varphi(0)=0,\varphi'(0)\neq 0\}
    \]
    with group composition law $j^i_0(\varphi_1)\cdot j^i_0(\varphi_2) = j^i_0(\varphi_1\circ\varphi_2)$. The group $G^i$ has a natural right action on $U^i(M)$, defined by
    \[
        j^i_0(\varphi).j^i_0(\gamma) = j^i_0(\gamma\circ \varphi)\quad \text{for any}\ j^i_0(\varphi)\in G^i, j^i_0(\gamma)\in U^i(M)
    \]
    Using standard arguments, one can prove that this action is smooth, free, and proper, so that $U^i(M)/G^i$ is naturally a smooth manifold. 
    
\subsection{Separation of Regular Abnormal Extremal Trajectories by Jets}

    We can now define for each $0\leq i \leq n-4$ a map $\rho^i$ that takes a point $\lambda\in \mathcal{R}_D$ to the unparameterized jet of the corresponding abnormal extremal trajectory
    \[
        \rho^i: \mathcal{R}_D\to U^i(M)/G^i; \lambda\mapsto [j_0^i(\pi e^{tC}\lambda)]
    \]
    for any nonvanishing section $C$ of $\mathcal{C}$. The map $\rho^i$ is independent of the choice of $C$ because rescaling $C$ reparameterizes the abnormal extremal trajectory and hence corresponds to acting by an element of $G^i$. The next proposition demonstrates that the level set foliation given by the map $\rho^i$ is precisely the foliation generated by $V_{i+1}$.

    \begin{prop}
    \label{Separating Abnormal Extremals}
        Fix $\lambda \in \mathcal{R}_D$. There exists an open neighborhood $W$ of $\lambda$ such that for any $\lambda_1,\lambda_2\in W$ and $0\leq i\leq n-4$, $\lambda_1$ and $\lambda_2$ are in the same leaf of the foliation of $W$ generated by $V_{i+1}$ if and only if $\rho^i(\lambda_1)=\rho^i(\lambda_2)$. By convention, $V_{n-3}$ is the rank-zero distribution, so $\rho^{n-4}$ is a local diffeomorphism onto its image.
    \end{prop}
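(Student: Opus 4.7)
Since $V_{i+1}$ is involutive by Lemma~\ref{involutivity} and of constant rank $n-4-i$ on $\mathcal{R}_D$ (from the flag~\eqref{calJFlag}), by the Frobenius theorem it integrates to a regular foliation near $\lambda$. The proposition therefore reduces to showing $V_{i+1}=\ker T\rho^i$ pointwise in a neighborhood of $\lambda$; one then shrinks $W$ so that the fibers of $\rho^i|_W$ are connected submanifolds, which forces them to coincide with the $V_{i+1}$-leaves.

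I argue by induction on $i$. The base case $i=0$ is immediate: $U^0(M)/G^0=M$, $\rho^0=\pi$, and $V_1=\ker T\pi$ by definition. For the inductive step, the natural forgetful projection $\mathrm{pr}\colon U^i/G^i\to U^{i-1}/G^{i-1}$ satisfies $\rho^{i-1}=\mathrm{pr}\circ\rho^i$, hence $\ker T\rho^i\subseteq\ker T\rho^{i-1}=V_i$. Since $V_{i+1}$ is a hyperplane of $V_i$, it suffices to prove (a) $V_{i+1}\subseteq\ker T\rho^i$ and (b) some vector of $V_i\setminus V_{i+1}$ is not in $\ker T\rho^i$.

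For both (a) and (b), fix a nonvanishing section $\tilde X$ of $\mathcal{C}$ and a local section $\tilde v$ of $V_i$. The curve of abnormal extremals $\gamma_s(t)=\pi\,e^{t\tilde X}e^{s\tilde v}\lambda$ has first-order variation
\[
U(t):=\frac{\partial\gamma_s(t)}{\partial s}\bigg|_{s=0}=T\pi\bigl((\Phi^{\tilde X}_t)_{*}\tilde v\bigr)(\Phi^{\tilde X}_t\lambda)=\sum_{k\geq 0}\frac{(-t)^k}{k!}\,T\pi(\ad_{\tilde X}^k\tilde v)(\Phi^{\tilde X}_t\lambda),
\]
using $(\Phi^{\tilde X}_t)_{*}\tilde v=e^{-t\ad_{\tilde X}}\tilde v$. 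A direct computation with the group law of $G^i$ identifies the tangent space to the $G^i$-orbit through $j_0^i\gamma_0$ as the space of $i$-jets of curves $t\mapsto\psi(t)\gamma_0'(t)$ with $\psi$ smooth and $\psi(0)=0$; thus $\tilde v\in\ker T\rho^i$ if and only if $U(t)\equiv\psi(t)\gamma_0'(t)\pmod{O(t^{i+1})}$ for some such $\psi$. The key algebraic input is the containment $[\mathcal{C},\mathcal{J}_{(j+1)}]\subseteq\mathcal{J}_{(j)}$, proved by evaluating $d\oline\sigma=0$ on triples $(X,Y,Z)\in\mathcal{C}\times\mathcal{J}_{(j+1)}\times\mathcal{J}^{(j)}$ and using $\mathcal{C}=\ker(\oline\sigma|_H)$ together with $[\mathcal{C},\mathcal{J}^{(j)}]\subseteq\mathcal{J}^{(j+1)}$. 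Iterating gives $\ad_{\tilde X}^k\tilde v\in\mathcal{J}_{(i+1-k)}=V_{i+1-k}+\mathcal{C}\subseteq V_1+\mathcal{C}$ for all $0\leq k\leq i$ whenever $\tilde v\in V_{i+1}$. Since $T\pi(V_1+\mathcal{C})=\langle\gamma_0'\rangle$, the first $i+1$ Taylor terms of $U(t)$ lie in $\langle\gamma_0'(t)\rangle$ and the tail is $O(t^{i+1})$, so $U(t)=\psi(t)\gamma_0'(t)+O(t^{i+1})$ with $\psi(0)=T\pi\tilde v(\lambda)=0$. This proves (a).

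For (b), apply the same analysis to $\tilde v=\tilde\ve_{n-3-i}\in V_i\setminus V_{i+1}$, using the adapted frame from Section~\ref{sectionSympSymbol}. The terms with $k\leq i-1$ still lie in $V_1+\mathcal{C}$, but $\ad_{\tilde X}^i\tilde\ve_{n-3-i}\equiv\tilde\ve_{n-3}\pmod{V_1+\mathcal{C}}$, and $T\pi\tilde\ve_{n-3}(\lambda)$ is nonzero in $D(\pi\lambda)/\langle T\pi\tilde X(\lambda)\rangle$ because $T\pi$ induces an isomorphism $\mathcal{J}/\mathcal{J}_{(1)}\cong D/\langle T\pi\tilde X\rangle$. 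Hence $U(t)\bmod\langle\gamma_0'(t)\rangle$ has nonvanishing coefficient at order $t^i$, giving $\tilde\ve_{n-3-i}\notin\ker T\rho^i$. The hardest step is verifying the bracket identity $[\mathcal{C},\mathcal{J}_{(j+1)}]\subseteq\mathcal{J}_{(j)}$ cleanly (a standard but unstated consequence of the even contact structure) together with the identification of $T_{[j_0^i\gamma_0]}(U^i/G^i)$ in terms of classes of multiples of $\gamma_0'(t)$; once these are laid out, the Lie-theoretic computation of $U(t)$ slides into the inductive framework uniformly in $i$.
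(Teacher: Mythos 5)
Your argument is correct, and it reaches the kernel identity $\ker T_\lambda\rho^i=V_{i+1}(\lambda)$ by a genuinely different decomposition than the paper's Lemma \ref{Jet Kernel}. The paper proves both inclusions head-on: the inclusion $\ker T_\lambda\rho^i\subseteq V_{i+1}(\lambda)$ is obtained by pairing a kernel vector against \emph{every} element $w=\tfrac{d^j}{dt^j}e^{-tC}_*Y(e^{tC}\lambda)|_{t=0}$ of $\mathcal{J}^{(i+1)}(\lambda)$, using that the flow $e^{tC}$ preserves $\sigma$ together with a binomial expansion of $\tfrac{d^j}{dt^j}\sigma(v,e^{-tC}_*Y)$; the reverse inclusion is proved by induction by extracting the scalar $\alpha_i$ from $\tfrac{d^i}{dt^i}e^{tC}_*(v)|_{t=0}\in\mathcal{C}\oplus V_1$ and exhibiting the explicit reparameterization $\varphi_s(t)=t-\alpha_ist^i/i!$ that kills the $i$th jet. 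You instead get the upper bound $\ker T\rho^i\subseteq V_i$ for free from the forgetful projection $U^i/G^i\to U^{i-1}/G^{i-1}$ and the inductive hypothesis, and then exploit that $V_{i+1}$ has codimension one in $V_i$, so that the hard inclusion never has to be proved: it suffices to show $V_{i+1}\subseteq\ker T\rho^i$ and to exhibit the single test vector $\tilde\ve_{n-3-i}$ outside the kernel. Your computational engine is the infinitesimal version of the paper's: the bracket containment $[\mathcal{C},\mathcal{J}_{(j+1)}]\subseteq\mathcal{J}_{(j)}$ (which you correctly flag as the one unstated lemma and correctly derive from $d\sigma=0$, $\mathcal{C}=\ker(\sigma|_H)$, and $[\mathcal{C},\mathcal{J}^{(j)}]\subseteq\mathcal{J}^{(j+1)}$) fed into $(\Phi^{\tilde X}_t)_*\tilde v=e^{-t\,\ad_{\tilde X}}\tilde v$, with an abstract identification of the tangent space to the $G^i$-orbit replacing the paper's explicit reparameterization. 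What your route buys is economy (no pairing against all of $\mathcal{J}^{(i+1)}$, no explicit $\varphi_s$); what it costs is the extra frame computation in step (b) that $\ad^i_{\tilde X}\tilde\ve_{n-3-i}\equiv\tilde\ve_{n-3}\not\equiv 0\pmod{\mathcal{J}_{(1)}}$, which the paper's symmetric treatment avoids. Two cosmetic points: $\psi(0)$ is the scalar coefficient of $\gamma_0'(0)$ in $T\pi\tilde v(\lambda)$ rather than the vector itself, and in step (b) one should note that the terms with $k\le i-1$ vanish modulo $\langle\gamma_0'(t)\rangle$ \emph{identically in $t$} (not merely at $t=0$), so they cannot pollute the $t^i$ coefficient; both are immediate from what you wrote. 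Your concluding passage from the pointwise kernel identity to the statement about leaves (constant-rank theorem after shrinking $W$) is equivalent to the paper's quotient-manifold argument.
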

    In order to prove the proposition, we shall prove a linearized version as a lemma. Once it is established, the proposition will be easily proven with the inverse function theorem. The following lemma is an improvement on the Lemma 2.12 in \cite{ProjEq}. 
    \begin{lemma}
    \label{Jet Kernel}
        For each $0\leq i \leq n-4$, let $\rho^i$ be the jet map defined above. At any $\lambda\in \mathcal{R}_D$, the kernel of the differential $T_\lambda\rho^i$ satisfies
        \[
            \ker(T_\lambda\rho^i) = V_{i+1}(\lambda)
        \]
        where $V_{n-3}(\lambda)$ is the zero subspace by convention.
    \end{lemma}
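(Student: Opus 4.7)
The plan is to argue by induction on $i$. The base case $i=0$ is immediate: $U^0(M)/G^0 \cong M$ identifies $\rho^0$ with $\pi$, so $\ker T_\lambda\rho^0 = \ker T_\lambda\pi = V_1(\lambda)$. For the inductive step, I would assume $\ker T_\lambda\rho^{i-1} = V_i(\lambda)$ and observe that the $G^i$-equivariant truncation of jets $J^i_0\to J^{i-1}_0$ (which descends to the quotient via the group homomorphism $G^i\to G^{i-1}$) lets $\rho^i$ factor through $\rho^{i-1}$, forcing $\ker T_\lambda\rho^i \subseteq V_i(\lambda)$. Since $\dim V_i(\lambda) - \dim V_{i+1}(\lambda) = 1$, it then suffices to establish (a) $V_{i+1}(\lambda)\subseteq \ker T_\lambda\rho^i$, and (b) there exists $v\in V_i(\lambda)\setminus V_{i+1}(\lambda)$ with $T_\lambda\rho^i(v)\neq 0$.

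The central tool will be an explicit coordinate formula for $T_\lambda\rho^i$. I would choose local coordinates $(y^\alpha)$ on $M$ near $\pi(\lambda)$, set $f^\alpha := y^\alpha\circ\pi$, let $\Phi_t$ be the flow of a nonvanishing local section $C$ of $\mathcal{C}$, and let $\phi_s$ be the flow of a local extension $\tilde v$ of $v$. Taylor-expanding $f^\alpha\circ\Phi_t$ in $t$ and applying $\tilde v$ yields
\[
\frac{\partial^{k+1}}{\partial t^k\,\partial s}\bigg|_{t=s=0}\, y^\alpha\bigl(\pi(\Phi_t\phi_s\lambda)\bigr) = v(C^k f^\alpha).
\]
Accounting for the infinitesimal $G^i$-action, which adds $\xi(t)\gamma'_\lambda(t)$ for $\xi(t)=a_1 t+\cdots+a_i t^i$ with $\xi(0)=0$, the condition $v\in\ker T_\lambda\rho^i$ becomes: there exist constants $a_1,\ldots,a_i$ satisfying $v(C^k f^\alpha) = \sum_{j=1}^{k} a_j\tfrac{k!}{(k-j)!}(C^{k-j+1}f^\alpha)(\lambda)$ for all $\alpha$ and $k=0,\ldots,i$. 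To analyze this system, I would work in the Tanaka-adapted frame $\{\tilde X=C,\tilde\ve_1,\ldots,\tilde\ve_{2n-6},\tilde\eta\}$ of Section \ref{sectionSympSymbol}, so that $V_{i+1}=\langle\tilde\ve_1,\ldots,\tilde\ve_{n-4-i}\rangle$. Using the symbol relations $[\tilde X,\tilde\ve_j]\equiv\tilde\ve_{j+1}\pmod{\Symp(D)^{-j}}$ and the identities $\Symp(D)^{-j}=\mathcal{J}_{(n-3-j)}$, I would then establish by induction on $k$ that for $\tilde v\in\Gamma(V_{i+1})$, the iterated brackets $\tilde v_k := [\tilde v_{k-1},C]$ (with $\tilde v_0:=\tilde v$) satisfy $\tilde v_k|_\lambda\in \mathcal{J}_{(i+1-k)}(\lambda)$ for $0\le k\le i+1$, where $\mathcal{J}_{(0)}:=\mathcal{J}$.

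For part (a), the identity $v(C^k f^\alpha)=[\tilde v,C^k] f^\alpha|_\lambda$ (valid because $\tilde v f^\alpha\equiv 0$) together with the operator expansion $[\tilde v,C^k]=\sum_{\ell=0}^{k-1}C^{k-1-\ell}[\tilde v,C]C^\ell$ decomposes each equation at level $k$ into a principal contribution proportional to $T\pi(\tilde v_k|_\lambda)$ and lower-order corrections of the form $(C^{k-j+1}f^\alpha)(\lambda)$. Since $\tilde v_k|_\lambda\in\mathcal{J}_{(1)}=V_1\oplus\mathcal{C}|_\lambda$ for $1\le k\le i$, its $T\pi$-image is a scalar multiple of $\gamma'_\lambda(0)$, and the correction terms can be absorbed recursively by the choices of $a_1,\ldots,a_k$. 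For part (b), I would test $v=\tilde\ve_{n-3-i}|_\lambda$: the leading Tanaka component of $\tilde v_i|_\lambda$ is $\pm\tilde\ve_{n-3}|_\lambda$, whose $T\pi$-image is a nonzero vector linearly independent from $\gamma'_\lambda(0)$. This produces an obstruction at level $k=i$ that no choice of $a_i$ can absorb. The hard part will be cleanly tracking the lower-order correction terms in the operator expansion of $[\tilde v,C^k]$; here the weight bookkeeping of the $\Symp(D)$ Tanaka grading, combined with the involutivity conditions of Lemma \ref{involutivity}, is indispensable for ensuring that the flag $\{\mathcal{J}_{(j)}\}$ is preserved by $\ad_C$ in the required fashion.
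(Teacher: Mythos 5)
Your plan is correct, but it reaches the lemma by a genuinely different route than the paper. The paper proves the inclusion $\ker(T_\lambda\rho^i)\subseteq V_{i+1}(\lambda)$ \emph{directly for every kernel element}: it translates the kernel condition into the coordinate identity \eqref{VerticalEquivalence} and then shows, using the invariance of $\oline\sigma$ under the flow of $C$ and the description of $\mathcal{J}^{(i+1)}$ as spanned by $t$-derivatives of pushed-forward vertical fields, that any such $v$ is skew-orthogonal to $\mathcal{J}^{(i+1)}(\lambda)$, hence lies in $\mathcal{J}_{(i+1)}\cap V_1=V_{i+1}$; the reverse inclusion is then an induction in which the same skew-orthogonality forces $\tfrac{d^i}{dt^i}e^{tC}_*(v)|_{t=0}\in\mathcal C\oplus V_1$, after which an explicit reparameterization $\varphi_s(t)=t-\alpha_ist^i/i!$ kills the $i$th derivative. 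You instead avoid the symplectic form entirely: you get $\ker(T_\lambda\rho^i)\subseteq V_i(\lambda)$ for free from jet truncation plus induction, and close the remaining codimension-one gap by a leading-term computation in the adapted frame, using the filtration $[\mathcal C,\mathcal J_{(m)}]\subseteq\mathcal J_{(m-1)}$ (which does follow from the frame relations of Section \ref{sectionSympSymbol}, so there is no circularity). I checked that your scheme closes: writing $v(C^kf^\alpha)=\sum_{j=1}^k\binom{k}{j}C^{k-j}(\tilde v_jf^\alpha)(\lambda)$ and using $\tilde v_jf^\alpha=g_j\,Cf^\alpha$ whenever $\tilde v_j\in\Gamma(\mathcal J_{(1)})$, the resulting coefficients of $(C^rf^\alpha)(\lambda)$ are exactly of the reparameterization form $a_j\tfrac{k!}{(k-j)!}$ with $a_j$ independent of $k$, which gives (a); and for (b) the equations at levels $k<i$ force $a_1,\dots,a_{i-1}$ to take these canonical values (triangularity in $\gamma'(0)$), so at level $i$ only a multiple of $\gamma'(0)$ remains adjustable and the transversal component $T\pi(\tilde\ve_{n-3}|_\lambda)$ of $\tilde v_i$ cannot be cancelled. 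That last point --- that no joint choice of $(a_1,\dots,a_i)$, not merely of $a_i$, can absorb the obstruction --- is the one step your sketch leaves implicit and is where the bookkeeping must be done honestly; it works, but it is precisely the computation the paper's $\sigma$-invariance argument packages away. The trade-off: the paper's proof is shorter and more intrinsic because the symplectic form does the filtration and non-degeneracy work simultaneously, while yours is more elementary (pure bracket calculus and linear algebra) and makes the jet-theoretic meaning of each $V_j$ more transparent. One cosmetic slip: it is $\rho^{i-1}$ that factors through $\rho^i$ (via truncation), not the reverse, though the inclusion of kernels you draw from it is the correct one.
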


    \begin{proof}
        Fix a nonvanishing section $C$ of $\mathcal{C}$, and fix coordinates on $\mathcal{R}_D$ and $M$ so that
        \begin{enumerate}[label=(\roman*)]
            \item The projection $\mathcal{R}_D\to M$ is the projection onto some coordinate subspace
            \item The line of forms $\oline{\sigma}$ on the even contact distribution $H$ on $\mathcal{R}_D$ has a local section $\sigma$ which is constant in the given coordinates.
        \end{enumerate}
        We can find such coordinates by choosing an appropriate subset of the coordinates from a canonical system on $T^*M$. Let us first show that $\ker(T_\lambda\rho^i) \subseteq V_{i+1}(\lambda)$ for each $0\leq 1 \leq n-4$ and each $\lambda\in \mathcal{R}_D$. Let $v\in \ker(T_\lambda\rho^i)$ be arbitrary, and let $v= \der{s}\lambda_s|_{s=0}$ for some smooth curve $\lambda_s$ in $\mathcal{R}_D$. Since $v\in \ker(T_\lambda \rho^i)$, there exists a reparameterization $\varphi_s(t)$ with $\varphi_0(t)=t$ which is jointly smooth in $s$ and $t$ such that
        \begin{equation}
            \Big(\der{s}j^i_0\big(\pi \circ e^{\varphi_s(t)C}\lambda_s\big)\big|_{s=0}\Big) = 0
        \end{equation}
        In the specified coordinate chart, this expression can be written
        \begin{gather}
        \label{jet 1}
            \frac{\partial^{\ell+1}}{\partial t^\ell \partial s} \Big(\pi \circ e^{\varphi_s(t)C}\lambda_s\Big)\Big|_{s=0,t=0} 
            =
            T\pi\Big(\frac{\text{d}^\ell}{\text{d}t^\ell}\Big(\pder{s} e^{\varphi_s(t)C}\lambda_s\big|_{s=0}\Big)\Big|_{t=0}\Big)
            \\
            \label{jet 2}
            =
             T\pi\Big(\frac{\text{d}^\ell}{\text{d}t^\ell}\Big(e^{tC}_*(v) + \pder[\varphi_s(t)]{s}\big|_{s=0}\cdot C(e^{tC}\lambda)\Big)\Big|_{t=0}\Big) \quad \text{for all}\ 0\leq \ell \leq i.
        \end{gather}
        Since this expression is zero, we have in the same coordinates that
        \begin{equation}
        \label{VerticalEquivalence}
            \frac{\text{d}^\ell}{\text{d}t^\ell}e^{tC}_*(v) 
            \equiv 
            -\frac{\text{d}^\ell}{\text{d}t^\ell}\pder[\varphi_s(t)]{s}\cdot C\big(e^{tC}(\lambda)\big)\Big|_{t=0}
        \end{equation}
        We now show that $v\in \mathcal{J}_{(i+1)} = \mathcal{C}\oplus V_{i+1}$. Recall that $\mathcal{J}_{(i+1)} = \Big(\mathcal{J}^{(i+1)}\Big)^\angle$. By definition, an arbitrary element $w\in \mathcal{J}^{(i+1)}(\lambda)$ can be written as $w=\frac{\text{d}^j}{\text{d}t^j}e^{-tC}_*Y(e^{tC}\lambda)|_{t=0}$ for some vertical vector field $Y$ and some $0\leq j\leq i$.
        Because $C\in \ker(\sigma)$, the flow $e^{tC}$ preserves the form $\sigma$, and we can write
        \begin{gather}
        \label{sigma(v,w) 1}
            \sigma(v,w) 
            =
            \sigma\Big(v,\frac{\text{d}^j}{\text{d}t^j}e^{-tC}_*Y(e^{tC}\lambda)|_{t=0}\Big)
            =
            \frac{\text{d}^j}{\text{d}t^j} \sigma\Big(v,e^{-tC}_*Y(e^{tC}\lambda)\Big)\Big|_{t=0}
        \end{gather}
        Next, expand the derivative using the binomial theorem and use equation \eqref{VerticalEquivalence} to continue the equality
        \begin{gather}
        \label{sigma(v,w) 2}
            = \sum_{a=0}^j \binom{j}{a}\cdot \sigma\Big(\frac{\text{d}^a}{\text{d}t^a}e^{tC}_*(v)\big|_{t=0}, \frac{\text{d}^{j-a}}{\text{d}t^{j-a}}Y(e^{tC}\lambda)|_{t=0}\Big)
            \\
            = \sum_{a=0}^j \binom{j}{a}\cdot \sigma\Big(-\frac{\text{d}^a}{\text{d}t^a} \pder[\varphi_s(t)]{s}\cdot C(e^{tC}\lambda)\Big|_{t=0}, \frac{\text{d}^{j-a}}{\text{d}t^{j-a}}Y(e^{tC}\lambda)|_{t=0}\Big)
            \\
            =
             \frac{\text{d}^j}{\text{d}t^j} \sigma\Big(-\pder[\varphi_s(t)]{s}\cdot C(e^{tC}\lambda), Y(e^{tC}\lambda)\Big)\Big|_{t=0} = 0.
        \end{gather}
        Consequently, we have that $v\in \Big(\mathcal{J}^{(i+1)}\Big)^\angle = \mathcal{J}_{(i+1)}$. Since $v$ is vertical, this implies that $v\in V_{i+1}$, as desired. 

        Now let us show by induction on $i$ that $V_{i+1}(\lambda)\subseteq \ker(T_\lambda\rho^i)$. The case $i=0$ holds because $V_1$ is the vertical fiber for the projection $\pi:\mathcal{R}_D\to M$. Now assume for induction that $V_j(\lambda)= \ker(T_\lambda\rho^{j-1})$ for each $j\leq i$. Let $v\in V_{i+1}(\lambda)$ be arbitrary; we aim to show that $T_\lambda\rho^i(v) = 0$.
        
        As before, let $w\in \mathcal{J}^{(i+1)}(\lambda)$ be arbitrary, and write $w=\frac{\text{d}^j}{\text{d}t^j}e^{-tC}_*Y(e^{tC}\lambda)|_{t=0}$ for some vertical vector field $Y$ and some $0\leq j\leq i$.

        By assumption, $v\in \mathcal{J}_{(i+1)} = \Big(\mathcal{J}^{(i+1)}\Big)^\angle$, so $\sigma(v,w) = 0$. Following the equalities \eqref{sigma(v,w) 1} and \eqref{sigma(v,w) 2}, and then applying the induction hypothesis, we have that
        \begin{equation}
            0 
            =
            \sum_{a=0}^j \binom{j}{a}\cdot \sigma\Big(\frac{\text{d}^a}{\text{d}t^a}e^{tC}_*(v)\Big|_{t=0}, \frac{\text{d}^{j-a}}{\text{d}t^{j-a}}Y(e^{tC}\lambda)\Big|_{t=0}\Big)
            \\
            =
            \sigma\Big(\frac{\text{d}^j}{\text{d}t^j}e^{tC}_*(v)\Big|_{t=0}, Y(e^{tC}\lambda)\Big).
        \end{equation}
        Since $Y$ is an arbitrary vertical vector field and $0\leq j \leq i$ is arbitrary, this implies that $\frac{\text{d}^i}{\text{d}t^i}e^{tC}_*(v)\big|_{t=0}\in \mathcal{J}_{(1)}(\lambda) = \mathcal{C}(\lambda)\oplus V_1(\lambda)$. Let $\alpha_i\in \mathbb{R}$ be defined by
        \begin{equation}
        \label{alpha i}
            \frac{\text{d}^i}{\text{d}t^i}e^{tC}_*(v)\Big|_{t=0} = \alpha_i\cdot C(\lambda)\ \pmod{V_1(\lambda)}
        \end{equation}
        From the induction hypothesis, we know that $v\in V_{i}(\lambda) = \ker(T_\lambda\rho^{i-1})$, so we can assume by rescaling $C$ that for each $0\leq \ell \leq i-1$,
        \[
            \Big(\der{s} j^\ell_0\big(\pi \circ e^{tC}\lambda_s\big)\big|_{s=0}\Big)
            =
            \frac{\partial^{\ell+1}}{\partial t^\ell \partial s} \Big(\pi \circ e^{tC}\lambda_s\Big)\Big|_{s=0,t=0} = T_\lambda\pi\Big(\frac{\text{d}^{\ell}}{\text{d} t^\ell } e^{tC}_*(v)\Big|_{t=0}\Big)=0
        \]
        Now define a smooth reparameterization $\varphi_s(t) = t-\alpha_ist^i/i!$. In the previously specified coordinates, we can follow \eqref{jet 1} and \eqref{jet 2} to compute
        \begin{gather}
            \Big(\der{s} j^i_0\big(\pi \circ e^{tC}\lambda_s\big)\big|_{s=0}\Big)
            =
            \frac{\partial^{i+1}}{\partial t^i \partial s} \Big(\pi \circ e^{\varphi_s(t)C}\lambda_s\Big)\Big|_{s=0,t=0} 
            =
             T\pi\Big(\frac{\text{d}^i}{\text{d}t^i}\Big(e^{tC}_*(v) + \pder[\varphi_s(t)]{s}\big|_{s=0}\cdot C(e^{tC}\lambda)\Big)\Big|_{t=0}\Big)
        \end{gather}
        Substituting with \eqref{alpha i} and the definition of $\varphi_s(t)$, we continue the equality
        \begin{gather}
            = T\pi\Big(\alpha_i\cdot C(\lambda) - \frac{\text{d}^i}{\text{d}t^i}\frac{\alpha_it^{i}}{i!}\cdot C(e^{tC}\lambda)\Big|_{t=0}\Big)\Big) = T\pi\Big(\alpha_i\cdot C(\lambda) - \alpha_i\cdot C(\lambda)\Big) = 0.
        \end{gather}
        Therefore, $T_\lambda \rho^i(v) = 0$, as desired.
    \end{proof}

    Having proven the lemma, let us now prove Proposition \ref{Separating Abnormal Extremals}.

    \begin{proof}
        Applying the lemma in the case $i=n-4$ gives that for each $\lambda\in \mathcal{R}_D$, $T_\lambda\rho^{n-4}$ is injective. This implies $\rho^{n-4}$ is a local diffeomorphism onto its image. Now fix $0\leq i \leq n-4$, and let $W$ be an open neighborhood of $\lambda$ such that $\rho^{n-4}|_W:W\to J^{n-4}_0(\R,M)$ is a diffeomorphism onto its image.

        At each point $\lambda_1\in W$, we have that $V_{i+1}(\lambda_1) = \ker(T_{\lambda_1}\rho^i)$ is an involutive distribution. Possibly by shrinking $W$, let $W\xrightarrow{\xi} \widehat W$  be the quotient generated by this distribution. By the construction of $\widehat W$, $\rho^i$ passes to a well-defined map $\widehat\rho^i: \widehat W\to U^i(M)/G^i$. Further,
        \[
            \ker(T_{\xi(\lambda_1)}\widehat\rho^i) = T_{\lambda_1}\xi\Big(\ker (T_{\lambda_1}\rho^i)\Big) = 0.
        \]
        Therefore, $\hat \rho^i$ is a local diffeomorphism, and the result follows.
    \end{proof}

 \subsection{Constructing the Correspondence}
 \label{sectionCorrespondence}
 Recall that the space $\mathcal{R}_D$ is a generic subset of the space $\mathcal{M}$ of pointed regular abnormal extremal trajectories of $D$. Proposition \ref{Separating Abnormal Extremals} demonstrates that such curves can be distinguished by their $(n-4)$th unparameterized jet. Similarly, Cartan prolongation of horizontal paths distinguish these paths according to their first unparameterized jet: By construction, the prolongations of horizontal paths $\gamma_1$ and $\gamma_2$ agree at $t=0$ precisely if the first jets of $\gamma_1$ and $\gamma_2$ coincide up to linear reparameterization. That is,

 \begin{equation}
 \label{JC1}
     \Big([j^1_0(\gamma_1)] = [j^1_0(\gamma_2)]\in U^1(M)/G^1\Big) \Longleftrightarrow \Big(\pr(\gamma_1)(0) = \pr(\gamma_2)(0)\Big)
 \end{equation}

 The following proposition generalizes this statement to higher derivatives, showing that Cartan prolongations separate all the horizontal curves of a rank 2 distribution (including abnormal extremals) according to their jets.

 \begin{prop}
 \label{Separating Horizontal Curves}
     Let $D$ be a smooth rank 2 distribution on a manifold $M$, and let $\gamma_1, \gamma_2:(-\ve,\ve)\to M$ be smooth paths horizontal with respect to $D$ with nonvanishing derivatives. For $k\geq 0$, let $\pr^{k}\gamma_1$ and $\pr^k\gamma_2: (-\ve,\ve)\to M_{k}$ be the $k$th Cartan prolongations of $\gamma_1,\gamma_2$. Then 
     \begin{equation}
     \label{JC2}
         \Big([j^k_0(\gamma_1)] = [j^k_0(\gamma_2)]\in U^k(M)/G^k\Big)
         \Longleftrightarrow
         \Big(\pr^k(\gamma_1)(0) = \pr^k(\gamma_2)(0)\Big)
     \end{equation}
 \end{prop}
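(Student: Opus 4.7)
The plan is to proceed by induction on $k$. The case $k = 0$ is tautological, and the case $k = 1$ is precisely the equivalence \eqref{JC1}. For the inductive step, the key observation is $\pr^k \gamma_i = \pr(\pr^{k-1}\gamma_i)$, so applying \eqref{JC1} to the distribution $\pr^{k-1}D$ on $M_{k-1}$ and the horizontal curves $\pr^{k-1}\gamma_i$ yields
\begin{equation}
\pr^k \gamma_1(0) = \pr^k \gamma_2(0) \iff [j^1_0(\pr^{k-1}\gamma_1)] = [j^1_0(\pr^{k-1}\gamma_2)] \in U^1(M_{k-1})/G^1.
\end{equation}
It therefore suffices to show that this condition on $1$-jets of the $(k-1)$st prolongations is equivalent to $[j^k_0(\gamma_1)] = [j^k_0(\gamma_2)]$ in $U^k(M)/G^k$.

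The direction $(\Leftarrow)$ will follow from the identity $\pr^{k-1}(\gamma \circ \varphi) = \pr^{k-1}\gamma \circ \varphi$, which is immediate from the definition of Cartan prolongation together with the time-homogeneity of Remark \ref{prolongation remark}(ii), combined with the observation that both $\pr^{k-1}\gamma(0)$ and $(\pr^{k-1}\gamma)'(0)$ depend only on $j^k_0(\gamma)$. For $(\Rightarrow)$, I would first project from $M_{k-1}$ to $M_{k-2}$ to deduce $\pr^{k-1}\gamma_1(0) = \pr^{k-1}\gamma_2(0)$, invoke the inductive hypothesis to obtain $[j^{k-1}_0(\gamma_1)] = [j^{k-1}_0(\gamma_2)]$, and lift a representative of this $G^{k-1}$-equivalence to a smooth reparameterization of $\gamma_2$ so that $j^{k-1}_0(\gamma_1) = j^{k-1}_0(\gamma_2)$ on the nose (this preserves both hypotheses). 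In this reduced situation the common projection $(\pr^{k-2}\gamma_i)'(0)$ is nonzero by Remark \ref{prolongation remark}(i), so the projective equality $[(\pr^{k-1}\gamma_1)'(0)] = [(\pr^{k-1}\gamma_2)'(0)]$ upgrades to genuine equality of these tangent vectors in $TM_{k-1}$. A direct coordinate computation then translates this equality into the condition $\gamma_1^{(k)}(0) - \gamma_2^{(k)}(0) \in \mathrm{span}(\gamma_1'(0))$, and the residual discrepancy is absorbed by the reparameterization $\varphi(t) = t + \beta t^k/k!$: Fa\`a di Bruno's formula gives $(\gamma_2 \circ \varphi)^{(j)}(0) = \gamma_2^{(j)}(0)$ for $j \leq k-1$ and $(\gamma_2 \circ \varphi)^{(k)}(0) = \gamma_2^{(k)}(0) + \beta \gamma_2'(0)$, yielding $j^k_0(\gamma_1) = j^k_0(\gamma_2 \circ \varphi)$ for a suitable choice of $\beta$.

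The main obstacle I anticipate is the coordinate computation underlying the claim that, once $j^{k-1}_0(\gamma_1) = j^{k-1}_0(\gamma_2)$, the equality $(\pr^{k-1}\gamma_1)'(0) = (\pr^{k-1}\gamma_2)'(0)$ captures exactly the class of $\gamma^{(k)}(0)$ modulo $\gamma'(0)$, and does so nondegenerately. A workable approach is to fix a local frame $X_1, X_2$ for $D$, parameterize the curve on an affine fiber chart so that $\gamma'(t) = u(t) X_1(\gamma(t)) + X_2(\gamma(t))$, and verify by induction on $j$ that the canonical fiber coordinate on $M_j \to M_{j-1}$ may be taken to be $u^{(j-1)}$, so that $\pr^k\gamma(0)$ is identified with the tuple $(\gamma(0), u(0), u'(0), \ldots, u^{(k-1)}(0))$. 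The desired claim then reduces via the Leibniz rule to checking that $u^{(k-1)}(0)$ is an affine function of $\gamma^{(k)}(0)$ whose lower-order corrections depend only on $j^{k-1}_0(\gamma)$, which is routine but bookkeeping-heavy.
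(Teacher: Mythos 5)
Your proof is correct, but it reaches the statement by a different route than the paper. The paper argues directly, with no induction: it fixes coordinates in which $\gamma_1$ is the $x^1$-axis and $X_1=\partial_{x^1}$, builds the tower of affine fiber coordinates $a^1,\ldots,a^k$ on $M_k$, and computes that $a^j\big(\pr^j\gamma_2(t)\big)=g^{(j-1)}(t)$, where $g$ is the steering function of $\gamma_2$ in the $x^1$-parameterization; both sides of \eqref{JC2} are then visibly equivalent to the single condition $g^{(i)}(0)=0$ for $0\le i\le k-1$, and reparameterizations are handled only at the very end by observing that agreement of the $k$-jets of $x^1\circ\gamma_i$ forces $\tau(t)=t+O(t^{k+1})$. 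Your induction on $k$ through $\pr^k=\pr\circ\pr^{k-1}$ and \eqref{JC1} is sound, and the auxiliary facts it requires all hold: that $j^1_0(\pr^{k-1}\gamma)$ is a function of $j^k_0(\gamma)$ (a chain-rule induction), the upgrade from projective to genuine equality of velocities by projecting to $M_{k-2}$ and using the already-matched $(k-1)$-jets, and the absorption of the residual $k$-th order discrepancy by $\varphi(t)=t+\beta t^k/k!$ via Fa\`a di Bruno. Be aware, though, that the ``bookkeeping-heavy'' identification you defer to the end---that the fiber coordinate of $M_j\to M_{j-1}$ along a prolonged curve is $u^{(j-1)}$---is exactly the paper's central computation, so the induction does not let you avoid it; what your organization buys is a cleaner and more explicit treatment of the reparameterization freedom, at the cost of the extra reduction steps. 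One minor point to make explicit when writing this up: the normalization $\gamma'=uX_1+X_2$ presumes the $X_2$-component of $\gamma'$ is nonvanishing near $0$, so the frame (or which of the two frame fields carries the coefficient $1$) should be chosen after reducing to the case where the two curves have the same direction at $t=0$.
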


 \begin{proof} For $k=0$, the proposition holds trivially, so assume $k>0$. Let $\gamma_1$ and $\gamma_2$ be curves with $\gamma_1(0)=\gamma_2(0)$. Choose coordinates $(x^1,x^2,\ldots, x^{n})$ near $\gamma_1(0)$ so that $\gamma_1(t) = (t,0,\ldots, 0)$ for small $t$. Choose a frame $(X_1,X_2)$ for $D$ with $X_1=\partial_{x^1}$. Define a function $a^1$ on $\mathbb PD$ by
 \[
    a^1\big(\langle \alpha X_1+ \beta X_2\rangle\big) = \beta/\alpha
 \]
 Then $(x^1,x^2,\ldots, x^{n},a^1)$ form a local coordinate chart on $M_1=\mathbb{P} D$. The prolonged distribution $\pr(D)$ then has local frame
 \[
    X_{1,1} = a^1X_1+X_2, \quad X_{1,2} = \partial_{a^1}
 \]
 where $X_1$ and $X_2$ are implicitly identified with their lifts satisfying $X_1(a^1) = X_2(a^1) = 0$. Continuing this process, one can recursively define coordinates $(x^1,x^2,\ldots, x^{n}, a^1,\ldots, a^k)$ on $M_k = \mathbb{P}\big(\pr^{k-1}D\big)$ by
 \[
    a^i\big(\langle\alpha X_{i-1,1} + \beta X_{i-1,2}\big) = \beta/\alpha \quad \text{for all}\ 1< i\leq k
 \]
where $X_{i-1,1}$ and $X_{i-1,2}$ are the frame for $\pr^{i-1}(D)$ defined by
 \[
    X_{i-1,1} = a^{i-1}X_{{i-1},1}+X_{{i-1},2}, \quad X_{i-1,2} = \partial_{a^{i-1}}
 \]
 where $X_{i-1,1}$ and $X_{i-1,2}$ are implicitly identified with their lifts satisfying $X_{i-1,1}(a^{i-1})=X_{i-1,2}(a^{i-1})=0$. In these coordinates, it is easy to see that for small $t$, the $k$th prolongation of $\gamma_1(t)$ has coordinates
 \[
    \pr^k(\gamma_1)(t) = (t,0,0,\ldots, 0).
 \]
 Now parameterize $\gamma_2$ by $t=x^1$ so that $x^1\big(\gamma_2(t)\big) = t$, and write the tangent vector to $\gamma_2(t)$ as
 \[
    \gamma_2'(t) = X_1\big(\gamma_2(t)\big) + g(t)\cdot X_2\big(\gamma_2(t)\big)
 \]
 for some smooth function $g(t)$. We can compute inductively 
 \[
    a^k\Big(\pr^k(\gamma_2)(t)\Big) = g^{(k-1)}(t)
 \]
 Consequently, in this parameterization,
 \[
    \Big(j^k_0(\gamma_1)=j^k_0(\gamma_2)\Big)\Longleftrightarrow 
    \Big(g^{(i)}(0)=0\ \text{for all}\ 0\leq i\leq k-1\Big)\Longleftrightarrow
    \Big(\pr^k(\gamma_1)(0)=\pr^k(\gamma_2)(0)\Big)
 \]
Finally, we must show that
\[
    \Big(j^k_0(\gamma_1)=j^k_0(\gamma_2\circ\tau)\ \text{for a smooth reparameterization}\ \tau\ \text{with}\ \tau(0)=0\Big)\Longleftrightarrow \Big(j^k_0(\gamma_1)=j^k_0(\gamma_2)\Big)
 \]
 This can be seen by considering the coordinate projections $x^1\big(\gamma_1(t)\big)$ and $x^1\big(\gamma_2(t)\big)$; in order for the $k$th jet of these curves to agree at $t=0$, the reparameterization $\tau$ must vanish at zero to order $k$. Therefore, $j^k_0(\gamma_2\circ \tau) = j^k_0(\gamma_2)$.
\end{proof}

Having proven the correspondence of jets and Cartan prolongations, we are now prepared to prove the first main result.

\begin{prop}
\label{Local Equivalence}
    Fix a nonvanishing section $C$ of the line bundle $\mathcal{C}$. The map $\psi: \mathcal{R}_D\to M_{n-4}$ defined by $\lambda\mapsto \pr^{n-4}(\pi e^{tC}\lambda)(0)$
    is a local diffeomorphism at each point of $\mathcal{R}_D$, and for each $\lambda\in \mathcal{R}_D$,
    \begin{equation}
    \label{Distr Eq}
        T_\lambda\psi\big(\Symp(D)(\lambda)\big) = \pr^{n-4}D\big(\psi(\lambda)\big).
    \end{equation}
    Further, $T_\lambda\psi\big(V_{i}(\lambda)\big)$ is precisely the vertical distribution for the projection $M_{n-4}\to M_{i-1}$.\footnote{Although $\psi$ is a local diffeomorphism, it is not surjective; the singular points of the prolonged manifold $M_{n-4}$ are not in in image of $\psi$.}
\end{prop}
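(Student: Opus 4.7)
The plan is to factor $\psi$ as a composition of two local diffeomorphisms $\Phi \circ \rho^{n-4}$. Here $\rho^{n-4}$ is the unparameterized-jet map from Proposition \ref{Separating Abnormal Extremals}, and $\Phi$ sends an unparameterized $(n-4)$-jet of a $D$-horizontal curve with nonvanishing derivative to its Cartan prolongation at $0$. By Proposition \ref{Separating Horizontal Curves}, $\Phi$ is well-defined, and by Proposition \ref{Separating Abnormal Extremals} with the convention $V_{n-3} = 0$, the map $\rho^{n-4}$ is a local diffeomorphism onto its image. To show that $\Phi$ is also a local diffeomorphism, I would exploit the coordinate formulas from the proof of Proposition \ref{Separating Horizontal Curves}: after normalizing the parameterization by $x^1 = t$, the relation $a^k(\pr^k(\gamma)(0)) = g^{(k-1)}(0)$ identifies $\Phi$ with the identity map in matched local charts on $U_D^{n-4}(M)/G^{n-4}$ and $M_{n-4}$. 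This yields that $\psi$ is a local diffeomorphism between manifolds of equal dimension $2n - 4$.

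To identify $T_\lambda \psi(V_i(\lambda))$ with the vertical of $\pi_{i-1}^{n-4}\colon M_{n-4} \to M_{i-1}$ for $1 \leq i \leq n - 4$, I would observe that truncating a prolongation to the $(i-1)$-st level commutes with passage through $\psi$, giving $\pi_{i-1}^{n-4} \circ \psi = \Phi_{i-1} \circ \rho^{i-1}$, where $\Phi_{i-1}$ is the analogous prolongation map at level $i - 1$. Since $\ker(T_\lambda \rho^{i-1}) = V_i(\lambda)$ by Proposition \ref{Separating Abnormal Extremals}, this yields $T_\lambda \psi(V_i) \subseteq \ker(T_{\psi(\lambda)} \pi_{i-1}^{n-4})$. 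Both sides have dimension $n - 3 - i$ for $2 \leq i \leq n - 4$ (and $n - 4$ for $i = 1$), so equality follows from dimension count combined with the injectivity of $T_\lambda\psi$.

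For the distribution equality \eqref{Distr Eq}, I would write $\Symp(D) = V_{n-4} \oplus \mathcal{C}$ and treat each summand separately. The previous step with $i = n - 4$ identifies $T_\lambda \psi(V_{n-4})$ with the vertical line of $M_{n-4} \to M_{n-5}$, which is contained in $\pr^{n-4}D$ by the very definition of Cartan prolongation. For the $\mathcal{C}$-summand, the time-homogeneity of Cartan prolongations (Remark \ref{prolongation remark}(ii)) yields $\psi(e^{sC}\lambda) = \pr^{n-4}(\pi e^{tC}\lambda)(s)$, so $T_\lambda \psi(\mathcal{C}(\lambda))$ is the tangent line at $0$ of the prolonged curve $\pr^{n-4}(\pi e^{tC}\lambda)$; this line lies in $\pr^{n-4}D$ and is transversal to the vertical direction. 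Summing gives the full rank-$2$ distribution $\pr^{n-4}D(\psi(\lambda))$.

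I expect the main technical obstacle to be formalizing the factorization $\psi = \Phi \circ \rho^{n-4}$ as a composition of smooth maps between smooth manifolds, since one must first endow the quotient $U_D^{n-4}(M)/G^{n-4}$ with a smooth structure and check smoothness of both factors with respect to it. The explicit local coordinates extracted from the proof of Proposition \ref{Separating Horizontal Curves} reduce this to a bookkeeping exercise with the frames $X_{i-1,1}, X_{i-1,2}$ and the fiber coordinates $a^i$, but the matching between jet coordinates and prolonged coordinates must be carried out carefully to confirm that $\Phi$ is locally the identity in the appropriate charts.
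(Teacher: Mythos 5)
Your proposal is correct and rests on the same two pillars as the paper's proof: Proposition \ref{Separating Abnormal Extremals} (equivalently, the injectivity of $T_\lambda\rho^{n-4}$ from Lemma \ref{Jet Kernel}) and Proposition \ref{Separating Horizontal Curves}, together with time-homogeneity of prolongations for the $\mathcal{C}$-direction. Your treatment of \eqref{Distr Eq} and of the verticality of $T_\lambda\psi(V_i)$ matches the paper's (the paper runs the chain $T\pi^{i-1}_{\psi(\lambda)}\circ T_\lambda\psi(v)=0 \Leftrightarrow T_\lambda\rho^{i-1}(v)=0 \Leftrightarrow v\in V_i(\lambda)$ in both directions, whereas you prove one inclusion and finish by a dimension count using injectivity of $T_\lambda\psi$; both are fine). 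The one genuine divergence is how the local-diffeomorphism property is obtained. You factor $\psi=\Phi\circ\rho^{n-4}$ and show $\Phi$ is locally the identity in matched charts, which requires equipping the space of unparameterized jets of $D$-horizontal curves with a smooth structure and checking that the image of $\rho^{n-4}$ is open in it --- exactly the technical overhead you flag. The paper avoids this by first proving \eqref{Distr Eq} together with linear independence of $T_\lambda\psi\big(C(\lambda)\big)$ and $T_\lambda\psi(v)$ for $v\in V_{n-4}(\lambda)\setminus\{0\}$, and then deducing that $T_\lambda\psi$ is invertible from the fact that both $\Symp(D)$ and $\pr^{n-4}D$ are bracket generating on manifolds of the same dimension $2n-4$. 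Your route buys a conceptually transparent identification of $\psi$ as a jet-to-prolongation correspondence; the paper's buys a shorter argument with no auxiliary smooth structure. In either case the needed nonvanishing of $T_\lambda\psi(v)$ for nonzero $v\in V_{n-4}(\lambda)$ traces back to the same source, namely $\ker(T_\lambda\rho^{n-4})=V_{n-3}(\lambda)=0$.
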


\begin{proof}

    Choose a nonzero $v\in V_{n-4}(\lambda)$. Then
    \[
        \Symp(D)(\lambda) = \langle C(\lambda), v\rangle
    \]
    In order to show \eqref{Distr Eq}, it suffices to show that $T_\lambda\psi\big(C(\lambda)\big)$ and $T_\lambda\psi\big(v\big)$ are linearly independent and contained in $\pr^{n-4}D\big(\psi(\lambda)\big)$. When this has been demonstrated, it will follow that $\psi$ is a local diffeomorphism near $\lambda$, since both $\Symp(D)$ and $\pr^{n-4}D$ are bracket generating.
    
    Recall from Remark \ref{prolongation remark} that Cartan prolongations are time-homogeneous; for any $\lambda \in \mathcal{R}_D$, we can compute
    \begin{equation}
    \label{Prolonged containment 1}
        T_\lambda\psi\big(C(\lambda)\big) 
        =
        \frac{d}{d\tau}\Big(\pr^{n-4}(\pi e^{tC}e^{\tau C}\lambda)(0)\Big)\Big|_{\tau=0}
        =
         \frac{d}{d\tau}\Big(\pr^{n-4}(\pi e^{tC}\lambda)(\tau)\Big)\Big|_{\tau=0}.
    \end{equation}
    where the prolongation in the first expression is with respect to the parameter $t$, considering $\tau$ to be constant. Since this is the derivative of a prolonged curve horizontal for $D$, it is tangent to $\pr^{n-4}D$ but is not contained in the vertical subspace for $M_{n-4}\to M_{n-5}$.
    
    Now let $\lambda_s$ be a curve in $\mathcal{R}_D$ tangent to $V_{n-4}$ with $v=\der{s}\lambda_s|_{s=0}$. Proposition \ref{Separating Abnormal Extremals} then gives that the curve of unparameterized jets 
    \[
        \rho^{n-5}(\pi e^{tC}\lambda_s) = [j^{n-5}_0(\pi e^{tC}\lambda_s)]\in U^{n-5}(M)/G^{n-5}
    \]
    is constant with respect to $s$. By Proposition \ref{Separating Horizontal Curves}, this implies that $\pr^{n-5}(\pi e^{tC} \lambda_s)(0)$ is constant with respect to $s$, so $T_\lambda\psi(v)$ is vertical for $M_{n-4}\to M_{n-5}$. Let us also show that $T_\lambda\psi(v)$ is nonzero. Again applying Proposition \ref{Separating Horizontal Curves}, the expression

    \begin{gather}
        T_\lambda\psi(v) 
        =
        \der{s}\Big(\pr^{n-4}\big(\pi e^{tC}\lambda_s\big)(0)\Big)\Big|_{s=0}
    \end{gather}
    is zero if and only if
    \begin{gather}
        \der{s}\Big(\big[j^{n-4}_0(\pi e^{tC}\lambda_s)\big]\Big)\Big|_{s=0} = T_\lambda\rho^{n-4}(v)
    \end{gather}
    is zero. By Lemma \eqref{Jet Kernel}, $T_\lambda\rho^{n-4}$ is an injection, so $T_\lambda\rho^{n-4}(v)$ is nonzero, and so too is $T_\lambda\psi(v)$. Since $T_\lambda\psi\big(C(\lambda)\big)$ is not vertical and $T_\lambda\psi(v)$ is nonzero and vertical, we have demonstrated \eqref{Distr Eq} and shown that $\psi$ is a local diffeomorphism.

    To prove the final statement, let $v = \der{s}\lambda_s|_{s=0}\in T_\lambda\mathcal{R}_D$ be arbitrary, and let $\pi^{i-1}:M_{n-4}\to M_{i-1}$ be the projection. The vector $v$ is vertical for $\pi^{i-1}$ precisely if the expression
    \begin{equation}
    \label{Tpsi}
        T\pi^{i-1}_{\psi(\lambda)}\circ T_\lambda\psi(v) = \der{s}\Big(\pr^{i-1}\big(\pi e^{tC}\lambda_s\big)(0)\Big)\Big|_{s=0}.
    \end{equation}
    is zero. Again by Proposition \ref{Separating Horizontal Curves}, \eqref{Tpsi} is zero if and only if
    \begin{equation}
    \label{Trho}
        \der{s}\Big(\big[j^{i-1}_0(\pi e^{tC}\lambda_s)\big]\Big)\Big|_{s=0} = T_\lambda\rho^{i-1}(v)
    \end{equation}
    is zero. By Proposition \ref{Separating Abnormal Extremals}, \eqref{Trho} is zero if and only if $v\in V_{i}(\lambda)$.
\end{proof}

The flag structure of $\Symp(D)$ also pushes forward over the local diffeomorphism $\psi$, determining the Tanaka symbol of $\pr^{(n-4)}D$ and the involutivity conditions for distinguished subdistributions.

\begin{corollary}
	The $(n-4)$th iterated Cartan prolongation $\pr^{n-4}D$ at a generic point has Tanaka symbol given by \eqref{SympGrading}. Further, there is a frame  $(\widetilde X,\widetilde \ve_1,\ldots \widetilde \ve_{2n-5},\widetilde \eta)$ on $M_{n-4}$ such that
    \[
        \big(\pr^{n-4}D\big)^i = \langle \widetilde X,\widetilde\ve_1,\ldots, \widetilde \ve_{i}\rangle\ \text{for}\ 1\leq i \leq 2n-5
    \]
    and which satisfies
	\begin{gather}
		\label{VV}
		\Big[\big\langle \widetilde \ve_1,\ldots, \widetilde \ve_{n-3-k}\big\rangle,  \big\langle \widetilde \ve_1,\ldots, \widetilde \ve_{n-3-k}\big\rangle\Big]\subseteq \big\langle \widetilde \ve_1,\ldots, \widetilde \ve_{n-3-k}\big\rangle \quad \text{and}
		\\
		\label{VJ}
		\Big[\big\langle \widetilde \ve_1,\ldots, \widetilde \ve_{n-3-k}\big\rangle,  \big\langle \widetilde X, \widetilde \ve_1,\ldots, \widetilde \ve_{n-3+k}\big\rangle\Big] \subseteq \big\langle \widetilde X, \widetilde \ve_1,\ldots, \widetilde \ve_{n-3+k}\big\rangle.
	\end{gather}
    for each $1\leq k \leq n-4$. This frame can also be constructed to satisfy
    \[
        \langle \widetilde \ve_1,\ldots, \widetilde\ve_i\rangle\ \text{is the vertical subspace for}\ M_{n-4}\to M_{n-4-i}\ \text{for}\ 1\leq i \leq n-4
    \]
\end{corollary}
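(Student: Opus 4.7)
The plan is to exploit the local diffeomorphism $\psi \colon \mathcal{R}_D \to M_{n-4}$ from Proposition~\ref{Local Equivalence} to transfer every structural feature of $\Symp(D)$ onto $\pr^{n-4}D$. Since $T_\lambda \psi\bigl(\Symp(D)(\lambda)\bigr) = \pr^{n-4}D\bigl(\psi(\lambda)\bigr)$ and $\psi$ is a local diffeomorphism, it sends weak derived flags to weak derived flags and commutes with the Lie bracket of vector fields. Hence the Tanaka symbol of $\pr^{n-4}D$ at the generic point $\psi(\lambda)$ is isomorphic to that of $\Symp(D)$ at $\lambda$, which by Proposition~\ref{Tanaka Symbol} is precisely the prolonged symplectic symbol \eqref{SympGrading}.

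For the frame, I take the adapted frame $(\widetilde X, \widetilde\ve_1, \ldots, \widetilde\ve_{2n-6}, \widetilde\eta)$ built on $\mathcal{R}_D$ in the proof of the proposition preceding Proposition~\ref{Tanaka Symbol} and push it forward by $\psi$, abusing notation to re-use the same symbols on $M_{n-4}$. That proof established $\Symp(D)^{-j} = \langle \widetilde X, \widetilde\ve_1, \ldots, \widetilde\ve_j\rangle$ and $V_k = \langle \widetilde\ve_1, \ldots, \widetilde\ve_{n-3-k}\rangle$, together with the identification $\Symp(D)^{-j} = \mathcal{J}^{(j-n+3)}$ for $n-3 \leq j \leq 2n-6$. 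Pushing these identities through $\psi$ immediately yields the filtration $(\pr^{n-4}D)^{-i} = \langle \widetilde X, \widetilde\ve_1, \ldots, \widetilde\ve_i\rangle$ in the stated range.

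The involutivity conditions \eqref{VV} and \eqref{VJ} are then direct translations of Lemma~\ref{involutivity}. Under the identifications above, $\langle \widetilde\ve_1, \ldots, \widetilde\ve_{n-3-k}\rangle$ corresponds to $V_k$ and $\langle \widetilde X, \widetilde\ve_1, \ldots, \widetilde\ve_{n-3+k}\rangle$ corresponds to $\Symp(D)^{-(n-3+k)} = \mathcal{J}^{(k)}$, so the inclusions $[V_k, V_k] \subseteq V_k$ and $[V_k, \mathcal{J}^{(k)}] \subseteq \mathcal{J}^{(k)}$ transfer through the diffeomorphism $\psi$ to give \eqref{VV} and \eqref{VJ}. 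Finally, the vertical subspace property is immediate from the last clause of Proposition~\ref{Local Equivalence}: setting $j = n-3-i$, the image $T_\lambda\psi(V_{n-3-i}) = \langle \widetilde\ve_1, \ldots, \widetilde\ve_i\rangle$ is the vertical distribution for the projection $M_{n-4} \to M_{n-4-i}$.

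The only real obstacle here is index bookkeeping; once the pushforward dictionary between $V_k$, $\mathcal{J}^{(k)}$, and the $\widetilde\ve_j$-spans on the two sides of $\psi$ is established, the corollary reduces to restating Lemma~\ref{involutivity} and the last clause of Proposition~\ref{Local Equivalence} on the prolongation side. No new geometric input is required beyond the local equivalence already proved.
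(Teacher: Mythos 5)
Your argument is correct and coincides with the paper's own justification, which likewise obtains the corollary by pushing the adapted frame of Section~3 forward through the local diffeomorphism $\psi$ of Proposition~\ref{Local Equivalence} and observing that \eqref{VV} and \eqref{VJ} are exactly the conditions of Lemma~\ref{involutivity} under the dictionary $V_k \leftrightarrow \langle \widetilde\ve_1,\ldots,\widetilde\ve_{n-3-k}\rangle$ and $\mathcal{J}^{(k)} \leftrightarrow \langle \widetilde X,\widetilde\ve_1,\ldots,\widetilde\ve_{n-3+k}\rangle$. Your index bookkeeping and the use of the final clause of Proposition~\ref{Local Equivalence} for the vertical-subspace statement match the intended proof.
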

This frame can be constructed by pushing forward the frame constructed in section 3 over the local diffeomorphism in Proposition \ref{Local Equivalence}. The conditions of the corollary are then precisely those of Proposition \eqref{involutivity}.

\section{Unification of Symbols and Existence of Invariant Normalization Conditions}

So far, we have interpreted the $\Symp(D)$ as the $(n-4)$th Cartan prolongation $\pr^{n-4}(D)$ and shown that this distribution enjoys the following advantageous properties:
\begin{enumerate}[label=(\roman*)]
    \item The Tanaka symbol of $\pr^{n-4}D$ is constant and independent of $D$ at a generic point. (In particular, its universal prolongation $\mathfrak{gl}_2(\R)\rtimes\mathfrak{heis}_{2n-7}$ with weights as in \eqref{SympGrading}.)
    \item The Tanaka symbol of $\pr^{n-4}D$ admits an invariant normalization condition.
\end{enumerate}
These properties allow us to use Theorem \ref{Cartan Connection} to assign a Cartan geometry to each $(2,n)$ distribution with $5$-dimensional cube. We now turn to the following question: Is there a $k<n-4$ such that $\pr^kD$ enjoys properties (i) and (ii)?

In Section \ref{sectionUnification}, we show that the Tanaka symbol of $\pr^k(D)$ is independent of $D$ for $k=n-5$, but not for any lower $k$. Finally, in section \ref{sectionNonexistence}, we demonstrate that for any $(2,n)$ distribution $D$ with the same Tanaka symbol as the most symmetric germ and for each $k<n-4$, the $k$th iterated Cartan prolongation $\pr^kD$ has Tanaka symbol not admitting a linear invariant normalization condition. Thus, even in the case of $\pr^{n-5}(D)$, where the symbol is independent of $D$, no linear invariant normalization condition exists.

Altogether, this shows that for an arbitrary $(2,n)$ distribution with $5$-dimensional cube, the iterated Cartan prolongation $\pr^{n-4}D$ (which is locally equivalent to $\Symp(D)$) is the earliest iterated Cartan prolongation to which a Cartan geometry can be assigned via Theorem \ref{Cartan Connection}. 

Let us now fix notation for a particular sequence of Tanaka symbols. Let $\mathfrak{s}^{k,n}$ be the negative part of the Lie algebra $\mathfrak{gl}_2(\R)\rtimes \mathfrak{heis}_{2n-5}$ with grading
\begin{gather}
\label{skn_symbol_grading}
    \text{wght}(Y)=1,\quad \text{wght}(H)=\text{wght}(E)=0, \quad \text{wght}(X) = -1,
    \\
    \text{wght}(\ve_i) = n-4-k-i,\quad \text{wght}(\eta) = -3-2k.
\end{gather}

In this language, Proposition \ref{Tanaka Symbol} can be rephrased as follows:\\
    {\it For any $(2,n)$ distribution $D$ with $5$-dimensional cube, the Tanaka symbol of $\Symp(D)$ is isomorphic to $\mathfrak{s}^{(n-4),n}$}

\subsection{The Unification of Tanaka Symbols}
\label{sectionUnification}

With the involutivity conditions \eqref{VV} and \eqref{VJ} in hand, we can demonstrate that the Tanaka symbol of $\pr^{n-5}D$ is independent of $D$.

\begin{theorem}
\label{Unification Theorem}
    Let $D\to M$ be a rank 2 distribution with $5$-dimensional cube. At a generic point of $M_{n-5}$, the prolonged Tanaka symbol of $\pr^{n-5}D$ is isomorphic to $\mathfrak{s}^{(n-5),n}$.
\end{theorem}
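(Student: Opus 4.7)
The plan is to transport the frame on $M_{n-4}$ provided by the Corollary to Proposition \ref{Local Equivalence} down to $M_{n-5}$ along the deprolongation projection $\pi\colon M_{n-4}\to M_{n-5}$, whose fibers are tangent to the Cauchy characteristic $\langle\widetilde\ve_1\rangle$ of $(\pr^{n-4}D)^{-2}$ (see Remark \ref{prolongation remark}(v)). At a generic point of $M_{n-4}$ the Corollary supplies a local frame $(\widetilde X,\widetilde\ve_1,\ldots,\widetilde\ve_{2n-6},\widetilde\eta)$ with $(\pr^{n-4}D)^{-j}=\langle\widetilde X,\widetilde\ve_1,\ldots,\widetilde\ve_j\rangle$ for $1\leq j\leq 2n-6$, realizing the bracket relations of the symbol $\mathfrak{s}^{(n-4),n}$ together with the involutivity conditions \eqref{VV}--\eqref{VJ}.

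Set $X'=\pi_*\widetilde X$, $\ve'_i=\pi_*\widetilde\ve_i$ for $2\leq i\leq 2n-6$, and $\eta'=\pi_*\widetilde\eta$; these form a local frame on $M_{n-5}$ with $\pr^{n-5}D=\langle X',\ve'_2\rangle$. Since pushforward commutes with the Lie bracket on projectable lifts modulo $\widetilde\ve_1$, the symbol relations on $M_{n-4}$ descend to
\begin{align}
[X',\ve'_i]&\equiv\ve'_{i+1}\pmod{\langle X',\ve'_2,\ldots,\ve'_i\rangle},\quad 2\leq i\leq 2n-7,\\
[\ve'_i,\ve'_{2n-5-i}]&\equiv (-1)^i\eta'\pmod{\langle X',\ve'_2,\ldots,\ve'_{2n-6}\rangle},\quad 2\leq i\leq 2n-7.
\end{align}
Inductively this yields $(\pr^{n-5}D)^{-j}=\langle X',\ve'_2,\ldots,\ve'_{j+1}\rangle$ for $1\leq j\leq 2n-8$, and at the final step both $\ve'_{2n-6}=[X',\ve'_{2n-7}]$ and $\eta'=\pm[\ve'_2,\ve'_{2n-7}]$ appear together, giving $(\pr^{n-5}D)^{-(2n-7)}=TM_{n-5}$ with a jump of two. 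Assigning weights $\text{wght}(X')=-1$, $\text{wght}(\ve'_i)=-(i-1)$ for $2\leq i\leq 2n-6$, and $\text{wght}(\eta')=-(2n-7)$, the resulting graded Lie algebra matches $\mathfrak{s}^{(n-5),n}$ from \eqref{skn_symbol_grading} with $k=n-5$.

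The main obstacle is to show that every remaining bracket $[\ve'_i,\ve'_j]$ with $i+j\neq 2n-5$ vanishes in the symbol of $\pr^{n-5}D$. Naively the symbol-level relation $[\widetilde\ve_i,\widetilde\ve_j]\in(\pr^{n-4}D)^{-(i+j-1)}$ only places $[\ve'_i,\ve'_j]$ in $(\pr^{n-5}D)^{-(i+j-2)}$, which is exactly the filtration level from which its symbol class is read, so it does not yet force vanishing. The required additional room comes from the involutivity conditions: for $i,j\geq 2$ with $\max(i,j)\leq n-4$, condition \eqref{VV} confines $[\widetilde\ve_i,\widetilde\ve_j]$ to $\langle\widetilde\ve_1,\ldots,\widetilde\ve_{\max(i,j)}\rangle\subseteq(\pr^{n-4}D)^{-\max(i,j)}$, whose pushforward lies in $(\pr^{n-5}D)^{-(\max(i,j)-1)}$, strictly smaller than $(\pr^{n-5}D)^{-(i+j-2)}$ because $\min(i,j)\geq 2$; the remaining ranges with $\max(i,j)>n-4$ are handled analogously via \eqref{VJ}, possibly after a grading-preserving change of the weight $-1$ and weight $-(2n-7)$ bases to absorb any residual off-diagonal contributions.
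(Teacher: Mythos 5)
Your proposal is correct and follows essentially the same route as the paper: descend the adapted frame of $\pr^{n-4}D$ along the deprolongation by the Cauchy characteristic $\langle\widetilde\ve_1\rangle$ and use the involutivity conditions \eqref{VV}--\eqref{VJ} to gain the one extra filtration level that the naive symbol relations alone do not provide. The only point the paper makes precise that you leave implicit is that the frame fields need not be projectable, so the descended brackets acquire correction terms proportional to $[\widetilde\ve_1,\cdot]$ (formula \eqref{(n-6) brackets}), and these are absorbed by the very same involutivity conditions.
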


\begin{proof} Let $(\widetilde X,\widetilde \ve_1,\ldots, \widetilde \ve_{2n-6},\widetilde \eta)$ be a frame on $M_{n-4}$ adapted to $\pr^{n-4} D$ satisfying \eqref{VV} and \eqref{VJ}. Recovering $M_{n-5}$ and $\pr^{n-5}D$ as in section \ref{Symp Via Cartan}, choose a coordinate system $(x^1,\ldots, x^{2n-4})$ on $M_{n-4}$ so that $\widetilde\ve_1 = \pder{x^1}$. Since $\widetilde\ve_1$ is a characteristic vector field for $(\pr^{n-4}D)^2$, the projection $\phi:M_{n-4}\to M_{n-5}$ is simply the coordinate projection which omits the coordinate $x^1$. For any vector field $A$ on $M_{n-4}$, define $A' = T\phi(A|_{x^1=0})$, a vector field on $M_{n-5}$. Recovering $\pr^{n-5}D$, we have for all $\lambda\in \{x^1=0\}$ that 

\[
    \pr^{n-5}D\big(\phi(\lambda)\big) = T_{\lambda}\phi\Big(\big(\pr^{n-5}D\big)^{-2}(\lambda)\Big) = \Big\langle \widetilde X'\big(\phi(\lambda)\big),\widetilde\ve_2'\big(\phi(\lambda)\big)\Big\rangle
\]

\noindent For any vector fields $A=A^i\pder{x^i}$ and $B = B^j\pder{x^j}$ on $M_{n-4}$, one can easily check in coordinates that

\begin{equation}
    \label{(n-6) brackets}
     [A',B']_{\phi(\lambda )}=T_\lambda\phi\Big([A,B]_{\lambda} - A^1(\lambda)\cdot \big[\widetilde \ve_1,B\big]+B^1(\lambda)\cdot\big[\widetilde \ve_1,A\big]\Big).
\end{equation}
Applying the involutivity conditions \eqref{VV} and \eqref{VJ} and using \eqref{(n-6) brackets}, we get
\begin{gather}
    [\widetilde X',\widetilde\ve_i']_{\phi(\lambda)} 
    \equiv
    T_\lambda\phi\Big(\big[\widetilde X,\widetilde\ve_i\big]_\lambda\Big)
    \equiv
    \widetilde \ve_{i+1}'(\lambda)
    \ \pmod{\langle \widetilde X',\widetilde\ve_2',\ldots \widetilde\ve_i'\rangle_\lambda}
    \quad
    \text{for all}\ 2\leq i \leq 2n-7
    \\
    [\widetilde\ve_i',\widetilde\ve_j']_{\phi(\lambda)} 
    \equiv
    T_\lambda\phi\Big(\big[\widetilde \ve_i,\widetilde\ve_j\big]_\lambda\Big)
    \equiv
    0
    \ \pmod{\langle \widetilde X',\widetilde\ve_2',\ldots \widetilde\ve_j'\rangle_\lambda}
    \quad 
    \text{for all}\ 2\leq i<j\leq 2n-7, i+j<2n-5
    \\
    [\widetilde\ve_i',\widetilde\ve_{2n-5-i}']_{\phi(\lambda)} 
    \equiv
    T_\lambda\phi\Big(\big[\widetilde \ve_i,\widetilde\ve_{2n-5-i}\big]_\lambda\Big)
    \equiv
    \widetilde \eta'(\lambda)
    \ \pmod{\langle \widetilde X',\widetilde\ve_2',\ldots \widetilde\ve_{2n-5-i}'\rangle_\lambda}
    \quad 
    \text{for all}\ 2\leq i\leq n-3,
\end{gather}
These relations then imply
\begin{gather}
    \Big(\pr^{n-5}D\Big)^k({\phi(\lambda)}) = \Big\langle \widetilde X',\widetilde\ve_2,\ldots, \widetilde \ve_{k+1}\Big\rangle_{\phi(\lambda)}\quad \text{for all}\ 1\leq k\leq 2n-8
    \\
    \Big(\pr^{2n-5}D\Big)^{2n-7}({\phi(\lambda)})= \Big\langle \widetilde X',\widetilde \ve_2',\ldots, \widetilde \ve_{2n-6}', \widetilde\eta'\Big\rangle_{\phi(\lambda)} = T_{{\phi(\lambda)}}\big(M_{n-5}\big)
\end{gather}
and the Tanaka symbol of $\pr^{2n-5}D$ at $\phi(\lambda)$ is as desired.
\end{proof}
In fact, equation \eqref{(n-6) brackets} implies that the involutivity conditions \eqref{VV} and \eqref{VJ} hold for the frame $(X',\ve_2',\ldots, \ve_{n-6}',\eta')$, as long as one adds apostrophes and requires $i,j\geq 2$.

\begin{remark}
    Theorem \ref{Unification Theorem} does not hold if one replaces $(n-5)$ with $(n-6)$; that is, the Tanaka symbol for the $(n-6)$th Cartan prolongation of a distribution with $5$-dimensional cube depends on the distribution. In particular, one can consider the rank 2 distributions which arise from the Monge equations
    \[
        z' = (y^{(n-3)})^{2}\quad \text{and}\quad z'' = (y^{(n-4)})^2.
    \]
    After applying $(n-6)$ iterated Cartan prolongations to the pair of distributions, one obtains distinct Tanaka symbols.   
\end{remark}

\subsection{Nonexistence of an Invariant Normalization Condition on Lower Cartan Prolongations}
\label{sectionNonexistence}
We have shown in theorems \ref{Existence Theorem} and Proposition \ref{Local Equivalence} that for a $(2,n)$ distribution $D$ with $5$-dimensional cube, the $(n-4)$th iterated Cartan prolongation admits a linear invariant normalization condition at a generic point. We now show that the result is sharp; namely, for each $k<n-4$, we shall show that any distribution of constant symbol $\mathfrak{s}^{0,n}$ has no linear invariant normalization condition at a generic point of its $k$th Cartan prolongation.

Recall that given a fundamental symbol $\mathfrak m$ the {\it  flat distribution  $D(\mathfrak  m)$  of constant symbol (or type)  $\mathfrak m$} is defined to be the left-invariant distribution corresponding to the $-1$-graded component $\mathfrak m_{-1}$
on the simply connected Lie group with Lie algebra $\mathfrak{m}$.
\begin{remark}
\label{flat_symm}
As shown in \cite{Tanaka1970, Yamaguchi_1993}, if  the universal Tanaka prolongation $\mathfrak g(\mathfrak m)$ of $\mathfrak m$  is finite dimensional, then  the following two statements hold:
\begin{enumerate}
\item the germ of the flat distribution $D(\mathfrak m)$ has the infinitesimal symmetry algebra isomorphic to the universal Tanaka prolongation $\mathfrak{g}(\mathfrak m)$;
 \item The dimension of the infinitesimal symmetry algebra of any distribution with constant symbol is not greater than $\dim\, \mathfrak{g}(\mathfrak m)$,   and $D(\mathfrak m)$ is the unique, up to a local equivalence, distribution of constant symbol $m$  with the algebra of infinitesimal symmetries of dimension equal to  $\dim\, \mathfrak{g}(\mathfrak m)$.
\end{enumerate}
\end{remark}
\begin{lemma}
\label{prolongation_synbol_lemma}
    Let $E\to M$ be a distribution with constant Tanaka symbol $\mathfrak{s}^{k,n}$. At a generic point, the Cartan prolongation $\pr(E)$ has Tanaka symbol $\mathfrak{s}^{k+1,n}$. Furthermore, for any $0\leq k<n-4$, the Cartan prolongation of the Tanaka flat distribution $D(\mathfrak{s}^{k,n})$ at generic point is locally equivalent to the Tanaka flat distribution $D(\mathfrak{s}^{k+1,n})$. 
    \end{lemma}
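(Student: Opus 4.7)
The plan is to verify the first assertion by a direct local computation with a frame on $\mathbb{P}E$ adapted to the prolongation, and to deduce the second from the maximal-symmetry characterization of the Tanaka flat model in Remark \ref{flat_symm}.

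For the first assertion, since $E$ has constant symbol $\mathfrak{s}^{k,n}$, fix a local frame $(\tilde X, \tilde \ve_{n-3-k}, \ldots, \tilde \ve_{2n-6}, \tilde \eta)$ for $TM$ with $E = \langle \tilde X, \tilde \ve_{n-3-k}\rangle$ and bracket relations realizing the structure constants of $\mathfrak{s}^{k,n}$ modulo lower weight. In an affine chart on $\mathbb{P}E$ with fiber coordinate $a$ in which lines of $E$ have the form $\langle\tilde X + a\tilde\ve_{n-3-k}\rangle$, take horizontal lifts $\hat X, \hat\ve_i, \hat\eta$ annihilating $\partial_a$. Then $\pr(E) = \langle X_1, \partial_a\rangle$ with $X_1 = \hat X + a\hat\ve_{n-3-k}$, and a direct computation gives $[X_1,\partial_a] = -\hat\ve_{n-3-k}$ and, iteratively, $(\ad X_1)^j(\partial_a) \equiv \pm \hat\ve_{n-3-k+j-1}$ modulo lower weight, with a cross-term $\pm a\hat\eta$ appearing alongside $\pm\hat\ve_{n-1+k}$ at step $j = 2k+3$. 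Identifying the prolonged frame with one adapted to $\mathfrak{s}^{k+1,n}$ via $X^{\mathrm{new}} = X_1$, $\ve^{\mathrm{new}}_{n-4-k} = \partial_a$, $\ve^{\mathrm{new}}_i = \hat\ve_i$ for $i \geq n-3-k$, and $\eta^{\mathrm{new}} = \hat\eta$, and reading off the grading $\mathrm{wght}(\ve_i) = n-5-k-i$, $\mathrm{wght}(\eta) = -5-2k$ of $\mathfrak{s}^{k+1,n}$, the weak derived flag builds up level by level as predicted, and the subsequent bracket $[\partial_a, (\ad X_1)^{2k+3}(\partial_a)] = \hat\eta$ isolates $\hat\eta$ at the correct depth. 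This identifies the Tanaka symbol of $\pr(E)$ at generic points of the chart with $\mathfrak{s}^{k+1,n}$.

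For the second assertion, Cartan prolongation and its inverse (deprolongation via the characteristic foliation of $(\pr D)^{-2}$, cf.\ Remark \ref{prolongation remark}(v)) are natural operations on rank-two distributions, so the infinitesimal symmetry algebras satisfy $\mathrm{Sym}(\pr D) \cong \mathrm{Sym}(D)$. Applied to $D = D(\mathfrak{s}^{k,n})$ together with the first part of Remark \ref{flat_symm}, this gives $\dim \mathrm{Sym}(\pr(D(\mathfrak{s}^{k,n}))) = \dim \mathfrak{g}(\mathfrak{s}^{k,n})$. Both $\mathfrak{g}(\mathfrak{s}^{k,n})$ and $\mathfrak{g}(\mathfrak{s}^{k+1,n})$ are isomorphic as ungraded Lie algebras to $\mathfrak{gl}_2(\mathbb{R})\rtimes \mathfrak{heis}_{2n-5}$, differing only in the grading, and hence share the same dimension. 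By the first assertion, $\pr(D(\mathfrak{s}^{k,n}))$ has symbol $\mathfrak{s}^{k+1,n}$ on an open dense subset, so its symmetry algebra attains the maximum dimension permitted by the second part of Remark \ref{flat_symm}, which in turn forces local equivalence to the flat model $D(\mathfrak{s}^{k+1,n})$ at generic points.

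The main technical obstacle is the grading-shift bookkeeping in the first assertion: one must check that the cross-term $\pm a\hat\eta$ entering at step $2k+3$ is consistent with the target grading, contributing together with $\pm\hat\ve_{n-1+k}$ to the rank-one weight-$(-(2k+4))$ component, while $\hat\eta$ itself is freed at weight $-(2k+5)$ only through the subsequent bracket with $\partial_a$. A secondary input, asserted after Proposition \ref{Tanaka Symbol} and verified by an analogous direct computation, is that the universal Tanaka prolongation of $\mathfrak{s}^{k,n}$ is the full algebra $\mathfrak{gl}_2(\mathbb{R}) \rtimes \mathfrak{heis}_{2n-5}$ for every $k$, ensuring that the dimension count in the second assertion is valid.
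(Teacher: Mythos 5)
Your treatment of the first assertion coincides with the paper's: both introduce the affine fiber coordinate $a$ on $\mathbb{P}E$, lift the adapted frame horizontally, take the new frame for $\pr(E)$ to be $\widetilde X + a\widetilde\ve_{n-3-k}$ together with $\pm\partial_a$, and track the single cross-term $\pm a\widetilde\eta$ that attaches to $\widetilde\ve_{n-1+k}$ at step $2k+3$; your index and weight bookkeeping agrees with the paper's corrected frame element $\hat\ve_{n-1+k}=\widetilde\ve_{n-1+k}+(-1)^{n+k+1}a\widetilde\eta$, so this part is fine.

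The second assertion is where your argument has a genuine gap. You deduce flatness of $\pr\big(D(\mathfrak{s}^{k,n})\big)$ from Remark \ref{flat_symm} via the chain $\dim\mathrm{Sym}(\pr D)=\dim\mathrm{Sym}(D)=\dim\mathfrak{g}(\mathfrak{s}^{k,n})=\dim\mathfrak{g}(\mathfrak{s}^{k+1,n})$, but the last equality --- equivalently, that $\mathfrak{g}(\mathfrak{s}^{k,n})\cong\mathfrak{gl}_2(\R)\rtimes\mathfrak{heis}_{2n-5}$ for every $k$ --- is precisely Lemma \ref{prolong_k_lemma}, and in the paper that lemma is \emph{proved from} the second assertion of the present lemma; the paper explicitly declines to compute $\mathfrak{g}(\mathfrak{s}^{k,n})$ directly, deferring to Lemma \ref{prolong_k_lemma} instead. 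So invoking it here, as your closing paragraph does (``verified by an analogous direct computation''), is circular unless you actually carry out that Tanaka prolongation computation, which you do not. Note also that Remark \ref{flat_symm}(2) requires finite-dimensionality of $\mathfrak{g}(\mathfrak{s}^{k+1,n})$, which is part of the same unsupplied input. The paper's own route is both shorter and independent of any prolongation computation: for the flat model one may choose the frame so that all structure relations hold \emph{exactly} rather than modulo lower-order terms; the correction function $c$ then vanishes identically, the prolonged frame has constant structure functions realizing $\mathfrak{s}^{k+1,n}$, and the prolonged distribution is therefore the left-invariant distribution on the corresponding nilpotent Lie group, i.e.\ the flat model $D(\mathfrak{s}^{k+1,n})$. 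Your symmetry-count argument can be repaired by substituting this observation (or by genuinely computing $\mathfrak{g}(\mathfrak{s}^{k,n})$ from \eqref{Tanaka prolongation formula}), but as written the key dimension equality rests on a statement whose only proof in the paper depends on the claim you are trying to establish.
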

\begin{proof}
Let
\begin{equation}
\label{skn frame}
    \big(\widetilde X,\widetilde \ve_{n-3-k},\widetilde\ve_{n-2-k},\ldots, \widetilde\ve_{2n-6},\widetilde\eta\big)
\end{equation}
be a frame on $M$ which satisfies $E = \langle \widetilde X, \widetilde \ve_{n-3-k}\rangle$ and
\begin{align*}
    &[\widetilde X, \widetilde \ve_i] 
     \equiv \ve_{i+1} 
    && \mod \langle \widetilde X, \widetilde \ve_{n-3-k},\ldots, \widetilde\ve_{i}\rangle 
    && \text{for}\ n-3-k\leq i \leq n-2+k
    \\
    &[\widetilde X, \widetilde \ve_i] 
     \equiv \ve_{i+1} 
    && \mod \langle \widetilde X, \widetilde \ve_{n-3-k},\ldots, \widetilde\ve_{i},\widetilde \eta\rangle 
    && \text{for}\ n-1+k\leq i \leq 2n-7
    \\
    &[\widetilde \ve_i,\widetilde\ve_j]
     \equiv 0
    && \mod \langle \widetilde X, \widetilde \ve_{n-3-k},\ldots,\widetilde \ve_{i+j-n+3+k}\rangle
    &&  \text{for}\ n-3-k\leq i < j\ \text{with}\ i+j<2n-5
    \\
    &[\widetilde \ve_i,\widetilde\ve_{2n-5-i}] 
     \equiv \eta 
    && \mod \langle \widetilde X, \widetilde \ve_{n-3-k},\ldots,\widetilde \ve_{n-2+k}\rangle 
    &&  \text{for}\ n-3-k\leq i \leq n-3
    \\
    &[\widetilde \ve_i,\widetilde\ve_j] 
     \equiv 0 
    && \mod \langle \widetilde X, \widetilde \ve_{n-3-k},\ldots,\widetilde \ve_{i+j-n+3+k},\eta\rangle
    &&  \text{for}\ n-3-k\leq i < j\leq 2n-6\ 
    \\
    & && && \quad \text{with}\ 2n-5 < i+j
    \\
    &[\widetilde X,\widetilde \eta] 
     \equiv 0 
    && \mod \langle \widetilde X,\widetilde\ve_{n-3-k},\ldots, \widetilde \ve_{n-1+k},\widetilde\eta\rangle
    \\
    &[\widetilde \ve_i,\widetilde \eta]
     \equiv 0 
    && \mod \langle \widetilde X,\widetilde\ve_{n-3-k},\ldots, \widetilde\ve_{i+2+2k},\widetilde\eta\rangle
    &&   \text{for}\ n-3-k\leq i \leq 2n-6
\end{align*}
For some smooth function $c$, we can then write
\begin{equation}
    [\widetilde X,\widetilde\ve_{n-1+k}] \equiv \widetilde \ve_{n+k} + c\cdot \widetilde \eta \mod \langle \widetilde X,\widetilde \ve_{n-3+k},\ldots, \widetilde \ve_{n-1+k}\rangle
\end{equation}
 Define a function $a$ on a generic subset of $\mathbb{P}E$ by

\[
    a\big(\langle \alpha \widetilde X + \beta\widetilde\ve_{n-3-k}\rangle\big) = \beta/\alpha.
\]
Define $\partial_a$ to be the vector field vertical with respect to the projection $\mathbb{P}E\to M$ with $da(\partial_{a})=1$. For each vector field $\widetilde A$ in the frame \eqref{skn frame} on $M$, identify $\widetilde A$ with the vector field on $\mathbb{P}D(\mathfrak{s}^{k,n})$ which lifts $\widetilde A$ and is annihilated by $da$. Then $(\widetilde\partial_a, \widetilde X, \widetilde\ve_{n-3-k}, \widetilde\ve_{n-2-k},\ldots, \widetilde\ve_{2n-6}, \widetilde\eta)$ forms a frame for the manifold $E$. Define
\begin{gather*}
    \hat X =  X + a \widetilde\ve_{n-3-k},\quad \hat \ve_{n-4-k} = -\partial_a
    \\
    \hat \ve_{i} =  \widetilde\ve_{i}\ \text{for}\ n-3-k\leq i \leq 2n-6\ \text{with}\ i\neq n-1+k,n+k
    \\
    \hat \ve_{n+k-1} =  \widetilde\ve_{n+k-1}\ + (-1)^{n+k+1}a\widetilde\eta,\quad \hat \eta =  \widetilde\eta
    \\
    \hat\ve_{n+k} = \widetilde\ve_{n+k} + c\cdot \widetilde\eta
\end{gather*}
where $c$ is the function defined by
\[
    \big[\hat X, \hat \ve_{n-1+k}\big] 
    \equiv 
    \widetilde \ve_{n+k} +  c\cdot\widetilde\eta 
    \mod \big\langle\widetilde X, \widetilde\ve_{n-3-k},\ldots,\widetilde\ve_{n-2+k},\widetilde\ve_{n-1+k} + (-1)^{n+k+1}a\widetilde\eta\big\rangle
\]
In this language, $\pr 
(E)= \langle \hat X, \hat \ve_{n-4-k}\rangle$, and the frame 
\begin{equation}
\label{sk+1n frame}
    \big( \hat X, \hat \ve_{n-4-k},\hat \ve_{n-3-k},\ldots, \hat \ve_{2n-6},\hat \eta\big)
\end{equation}
enjoys relations
\begin{align*}
    &[\hat X, \hat \ve_i] 
     \equiv \hat\ve_{i+1} 
    && \mod \langle \hat X, \hat \ve_{n-4-k},\ldots, \hat\ve_{i}\rangle 
    && \text{for}\ n-4-k\leq i \leq n-1+k
    \\
    &[\hat X, \hat \ve_i] 
     \equiv \hat\ve_{i+1} 
    && \mod \langle \hat X, \hat \ve_{n-4-k},\ldots, \hat\ve_{i},\hat \eta\rangle 
    && \text{for}\ n-k\leq i \leq 2n-7
    \\
    &[\hat \ve_i,\hat\ve_j]
     \equiv 0
    && \mod \langle \hat X, \hat \ve_{n-4-k},\ldots,\hat \ve_{i+j-n+4+k}\rangle
    &&  \text{for}\ n-4-k\leq i < j\ \text{with}\ i+j<2n-5
    \\
    &[\hat \ve_i,\ve_{2n-5-i}] 
     \equiv \hat \eta 
    && \mod \langle \hat X, \hat \ve_{n-4-k},\ldots,\hat \ve_{n-1+k}\rangle 
    &&  \text{for}\ n-4-k\leq i \leq n-3
    \\
    &[\hat \ve_i,\hat\ve_j] 
     \equiv 0 
    && \mod \langle \hat X, \hat \ve_{n-4-k},\ldots,\hat \ve_{i+j-n+4+k},\hat\eta\rangle
    &&  \text{for}\ n-4-k\leq i < j\leq 2n-6\ 
    \\
    & && && \quad \text{with}\ 2n-5 < i+j
    \\
    &[\hat X,\hat \eta] 
     \equiv 0 
    && \mod \langle \hat X,\widetilde\ve_{n-4-k},\ldots, \hat \ve_{n+k},\hat\eta\rangle
    \\
    &[\hat \ve_i,\hat \eta]
     \equiv 0 
    && \mod \langle \hat X,\hat\ve_{n-4-k},\ldots, \hat\ve_{i+4+2k},\hat\eta\rangle
    &&   \text{for}\ n-4-k\leq i \leq 2n-6
\end{align*}
which demonstrates that the Tanaka symbol of $\pr (E)$ is $\mathfrak{s}^{(k+1),n}$, as desired.

In the case of the Tanaka flat distribution $D(\mathfrak{s}^{k,n})$, one can choose the frame \eqref{skn frame} so that all the relations hold without modulus. The function $c$ is then uniformly zero, and the frame  \eqref{sk+1n frame} enjoys its relations without modulus. Hence, the distribution $\pr D(\mathfrak{s}^{k,n})$ is locally equivalent at a generic point to $D(\mathfrak{s}^{k+1,n})$.
\end{proof}

In order to investigate normalization conditions for $\mathfrak{s}^{k,n}$, we must first consider its universal Tanaka prolongation.

\begin{lemma}
\label{prolong_k_lemma}
    For $n>5$ and $0\leq k< n-4$, the universal Tanaka prolongation $\mathfrak{g}(\mathfrak{s}^{k,n})$ of $\mathfrak{s}^{k,n}$ is isomorphic to $\mathfrak{gl}_2(\R)\rtimes\mathfrak{heis}_{2n-5}$,
    \begin{equation}
      \label{sym_s^k}
\mathfrak{g}(\mathfrak{s}^{k,n})\cong\mathfrak{gl}_2(\R)\rtimes\mathfrak{heis}_{2n-5},  
    \end{equation}
     with grading \eqref{skn_symbol_grading}.
\end{lemma}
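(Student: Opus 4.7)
The plan is to establish the isomorphism in \eqref{sym_s^k} by a two-step argument: first a lower bound via an explicit embedding, then a matching upper bound via downward induction on $k$ combined with the symmetry--prolongation correspondence. The base case $k=n-4$ is supplied by Proposition \ref{Tanaka Symbol}.

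For the lower bound, I would first verify that $\mathfrak{gl}_2(\R)\rtimes \mathfrak{heis}_{2n-5}$ under the grading \eqref{skn_symbol_grading} is itself a graded Lie algebra extending $\mathfrak{s}^{k,n}$ and satisfying condition (2) of Definition \ref{universal_prol_def}. The latter amounts to checking that every nonzero non-negatively graded basis element---namely $Y$, $H$, $E$, and those $\ve_i$ with $i\le n-4-k$---acts nontrivially on $\mathfrak{g}_{-1}(\mathfrak{s}^{k,n})=\langle X,\ve_{n-3-k}\rangle$. This is immediate from the formulas defining $\phi$ and the $\mathfrak{sl}_2(\R)$-irreducibility of the action on $\mathcal{E}$. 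By the universal property of $\mathfrak{g}(\mathfrak{s}^{k,n})$, this yields an injection of graded Lie algebras $\mathfrak{gl}_2(\R)\rtimes \mathfrak{heis}_{2n-5}\hookrightarrow \mathfrak{g}(\mathfrak{s}^{k,n})$, and hence $\dim \mathfrak{g}(\mathfrak{s}^{k,n})\ge 2n-1$.

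For the matching upper bound, I would argue by downward induction on $k$. Assuming the conclusion for $k+1$, Remark \ref{flat_symm}(1) shows that the flat model $D(\mathfrak{s}^{k+1,n})$ has infinitesimal symmetry algebra of dimension $2n-1$. By Lemma \ref{prolongation_synbol_lemma}, $\pr D(\mathfrak{s}^{k,n})$ is locally equivalent to $D(\mathfrak{s}^{k+1,n})$, and by Remark \ref{prolongation remark}(v) the local symmetries of a rank 2 distribution are in bijection with those of its Cartan prolongation (since $D$ is recovered from $\pr D$ as the span of the images of $\pr D$ under the natural projection). Consequently, the local symmetry algebra of $D(\mathfrak{s}^{k,n})$ has dimension exactly $2n-1$. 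Once the finite-dimensionality of $\mathfrak{g}(\mathfrak{s}^{k,n})$ is in hand, Remark \ref{flat_symm}(1) identifies this Tanaka prolongation with the symmetry algebra of the flat model, and the two bounds force $\mathfrak{g}(\mathfrak{s}^{k,n})\cong \mathfrak{gl}_2(\R)\rtimes \mathfrak{heis}_{2n-5}$.

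The principal obstacle is therefore verifying the finite-dimensionality of $\mathfrak{g}(\mathfrak{s}^{k,n})$. My plan is to apply the recursive formula \eqref{Tanaka prolongation formula} weight-by-weight and establish the vanishing $\mathfrak{g}_{j}(\mathfrak{s}^{k,n})=0$ for some single weight $j$ just beyond the top weight $\max(1,n-5-k)$ of $\mathfrak{gl}_2(\R)\rtimes \mathfrak{heis}_{2n-5}$; by the non-degeneracy clause (2) of Definition \ref{universal_prol_def}, vanishing at one weight propagates to every higher weight, since any $A\in \mathfrak{g}_{j+1}$ would then have $[A,\mathfrak{g}_{-1}]\subseteq \mathfrak{g}_j=0$ and hence $A=0$. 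The required control over weighted derivations of $\mathfrak{s}^{k,n}$ can be extracted from \cite[Theorem 3]{doubrov2009local}: the non-degeneracy of the bracket pairing with $\ve_{n-3-k}$ and $X$ (which holds generically in $\mathfrak{s}^{k,n}$ thanks to the Heisenberg symplectic structure on $\mathcal{E}$) pins down any candidate derivation up to an element already present in $\mathfrak{gl}_2(\R)\rtimes \mathfrak{heis}_{2n-5}$. This single bounded-weight calculation is the technical heart of the argument and is the step I expect to take the most effort.
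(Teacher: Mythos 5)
Your core mechanism---identifying $\mathfrak{g}(\mathfrak{s}^{k,n})$ with the infinitesimal symmetry algebra of the flat model via Remark \ref{flat_symm} and transporting that symmetry algebra along the local equivalence $\pr D(\mathfrak{s}^{k,n})\cong D(\mathfrak{s}^{k+1,n})$ from Lemma \ref{prolongation_synbol_lemma}---is exactly the paper's, but you run the induction in the opposite direction. The paper anchors at $k=0$: by \cite[Theorem 3]{doubrov2009local} the maximally symmetric $(2,n)$ germ of maximal class has symmetry algebra $\mathfrak{gl}_2(\R)\rtimes\mathfrak{heis}_{2n-5}$ and coincides with the flat model $D(\mathfrak{s}^{0,n})$, which gives $\mathfrak{g}(\mathfrak{s}^{0,n})$ immediately, and then prolongs upward. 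You anchor at $k=n-4$ via Proposition \ref{Tanaka Symbol} and deprolong downward. This introduces a circularity you could not have seen: for $n\geq 6$ the paper's proof of Proposition \ref{Tanaka Symbol} is itself deferred to the present lemma (the advertised ``direct computation'' is omitted), so your base case is not actually available unless you supply that computation independently. Re-anchoring at $k=0$ with \cite[Theorem 3]{doubrov2009local} repairs this and turns your argument into the paper's.

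The second problem is the finite-dimensionality step, which you correctly identify as necessary for Remark \ref{flat_symm}(1) but leave unexecuted. Your proposed fix---verify $\mathfrak{g}_j(\mathfrak{s}^{k,n})=0$ at a single weight $j$ just beyond $\max(1,n-5-k)$ using \eqref{Tanaka prolongation formula}---undersells the work: that formula defines $\mathfrak{g}_j$ in terms of all the lower non-negative components $\mathfrak{g}_{j-1},\mathfrak{g}_{j-2},\dots$, so carrying out the plan amounts to computing the entire non-negative prolongation directly, at which point the whole symmetry/deprolongation apparatus is redundant and the lemma is already proved by hand. The economical route (implicit in the paper) is to observe that $\mathfrak{g}(\mathfrak{m})$ always embeds into the symmetry algebra of the flat model, so once that symmetry algebra is known to have dimension $2n-1$, finite-dimensionality of $\mathfrak{g}(\mathfrak{s}^{k,n})$ is automatic and Remark \ref{flat_symm}(1) applies; your explicit graded embedding of $\mathfrak{gl}_2(\R)\rtimes\mathfrak{heis}_{2n-5}$ then closes the dimension count. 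With those two adjustments your proof is sound; as written, both the base case and the key finiteness step are gaps.
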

\begin{proof}
In \cite[Theorem 3]{doubrov2009local}, it is shown that for $n>5$, there is a unique maximally symmetric $(2,n)$ distributional germ with $5$-dimensional cube with infinitesimal symmetry algebra isomorphic to $\mathfrak{gl}_2(\R)\rtimes\mathfrak{heis}_{2n-5}$. 
The Tanaka symbol of this germ is isomorphic to $\mathfrak{s}^{0,n}$ and this germ is locally isomorphic to the flat distribution $D(\mathfrak{s}^{0,n})$ (which also follows from item (2) of Remark \ref{flat_symm}). Hence, by item (1) of  Remark \ref{flat_symm} it follows that 
\begin{equation}
\label{sym_s^0}
\mathfrak{g}(\mathfrak{s}^{0,n})\cong\mathfrak{gl}_2(\R)\rtimes\mathfrak{heis}_{2n-5}.
\end{equation}

Further, by the last  statement of Lemma  \eqref{prolongation_synbol_lemma}, for $0\leq k< n-4$, $\pr^kD(\mathfrak{s}^{0,n})\cong D(\mathfrak{s}^{k,n})$ locally at generic point of the former. Since the infinitesimal symmetry algebras of $D(\mathfrak{s}^{0,n})$ and $\pr^kD(\mathfrak{s}^{0,n})$ at generic points of the latter  are isomorphic, item (1) of Remark  \ref{flat_symm} together with \eqref{sym_s^0} implies \eqref{sym_s^k}. The grading of $\mathfrak{gl}_2(\R)\rtimes\mathfrak{heis}_{2n-5}$ must be as in \eqref{skn_symbol_grading} because of the uniqueness of the largest algebra satisfying items (1) and (2) of Definition \ref{universal_prol_def}. 
\end{proof}


\begin{theorem}
\label{Nonexistence Theorem}
        For $n>5$ and $k\leq n-5$, the Tanaka symbol $\mathfrak{s}^{k,n}$ does not admit a linear invariant normalization condition.
\end{theorem}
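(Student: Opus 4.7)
The plan is to argue by contradiction: assume that a linear invariant normalization condition $\mathcal{N} = \bigoplus_w \mathcal{N}_w \subseteq C^2_+(\mathfrak{m},\mathfrak{g})$ exists, where $\mathfrak{m} = \mathfrak{s}^{k,n}$ and $\mathfrak{g} = \mathfrak{g}(\mathfrak{s}^{k,n})$, and exhibit a $\mathfrak{g}^0$-equivariance constraint that no such $\mathcal{N}$ can meet. The starting point is Lemma \ref{prolong_k_lemma}, which identifies $\mathfrak{g} \cong \mathfrak{gl}_2(\R)\rtimes \mathfrak{heis}_{2n-5}$ with the grading \eqref{skn_symbol_grading}. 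The key structural observation is that for $k\leq n-5$, the non-negative part $\mathfrak{g}^0$ strictly contains the Borel subalgebra $\langle Y,H,E\rangle$ that one has for $k = n-4$: it also contains the Heisenberg generators $\ve_1,\ldots,\ve_{n-4-k}$, with $N := \ve_{n-4-k}\in \mathfrak{g}_0$ acting non-semisimply on $\mathfrak{m}$ via $[N,X] = -\ve_{n-3-k}$ and $[N,\ve_{n-1+k}] = (-1)^{n-4-k}\eta$, and trivially on the remaining basis elements. This is precisely the non-reductive ingredient that was absent in the $k = n-4$ case, where Morimoto's criterion succeeded.

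The core step will be to produce a 2-cochain $\omega \in C^2_w$ of some positive weight $w$ together with $\psi\in C^1_w$ such that (a) $[\omega]\in H^2_w(\mathfrak{m},\mathfrak{g})$ is non-zero, (b) $N\cdot\omega = \partial\psi$ in $C^2_w$, so that $N$ kills $[\omega]$ in cohomology, and (c) $\psi$ does not lie in $N\cdot C^1_w + Z^1_w$, where $Z^1_w$ denotes weight-$w$ 1-cocycles. Given such data, any lift $\omega_{\mathcal{N}} = \omega - \partial\chi \in \mathcal{N}_w$ of $[\omega]$ must satisfy $N\cdot\omega_{\mathcal{N}} \in \mathcal{N} \cap \partial(C^1_+) = \{0\}$ by $\mathfrak{g}^0$-invariance of $\mathcal{N}$; explicitly, $N\cdot\omega_{\mathcal{N}} = \partial(\psi - N\cdot\chi) = 0$ forces $\psi - N\cdot\chi \in Z^1_w$, contradicting (c). We would search for $\omega$ supported on extremal pairs such as $(X,\ve_{n-1+k})$ or $(\ve_2,\ve_{2n-7})$ with output in the negative-weight Heisenberg component, so that $N\cdot\omega$ has a clean closed form that pins down $\psi$ modulo cocycles.

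The hard part will be verifying condition (c), i.e.\ showing $\psi\notin N\cdot C^1_w + Z^1_w$. This reduces to an explicit computation of the Spencer coboundary $\partial$ and its interaction with the nilpotent operator $N$ on $C^1_w$, using only the bracket relations of $\mathfrak{gl}_2(\R)\rtimes\mathfrak{heis}_{2n-5}$ stated in Proposition \ref{Tanaka Symbol}; the combinatorics is finite but grows with $n$. A uniform treatment across $k\leq n-5$ should be obtainable by first handling the extremal case $k = n-5$ (where $\mathfrak{g}_0$ is minimal and $N = \ve_1$ is the single offending element) and then observing that for smaller $k$ the same mechanism, now with $N = \ve_{n-4-k}$, goes through because the crucial bracket $[\ve_{n-4-k},\ve_{n-1+k}] = (-1)^{n-4-k}\eta$ persists unchanged.
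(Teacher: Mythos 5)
You have correctly isolated the structural reason the theorem is true: for $k\leq n-5$ the non-negative part $\mathfrak{g}^0$ of $\mathfrak{g}(\mathfrak{s}^{k,n})$ contains the nilpotent Heisenberg elements $\ve_1,\dots,\ve_{n-4-k}$ (which is exactly what fails for $k=n-4$), and any $\mathfrak{g}^0$-invariant complement $\mathcal{N}$ to $\im\bigl(\partial|_{C^1_+}\bigr)$ must be stable under their adjoint action, which can be played off against the requirement $\mathcal{N}\cap\im(\partial)=0$. Your contradiction template is also logically sound: if $\omega=\omega_{\mathcal N}+\partial\chi$ and $N\cdot\omega=\partial\psi$, then $N\cdot\omega_{\mathcal N}=\partial(\psi-N\cdot\chi)\in\mathcal N\cap\im(\partial)=0$, so failure of $\psi\in N\cdot C^1_w+Z^1_w$ gives the contradiction. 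However, as written this is a plan rather than a proof: the cochains $\omega$ and $\psi$ are never exhibited, and conditions (a)--(c) --- in particular (c), which you yourself flag as ``the hard part'' --- are never verified. There is no a priori reason such a pair exists in some weight $w$, and the entire content of the theorem is concentrated in producing it; so the argument has a genuine gap at its decisive step.

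For comparison, the paper's proof sidesteps the need for anything like condition (c) by decomposing into eigenspaces of $\ad(H)$ rather than chasing cohomology classes. One checks from the eigenvalue table that $C^1_+\bigl(\mathfrak{s}^{k,n},\mathfrak{g}\bigr)_{2(n+k-3)}$ is one-dimensional, spanned by $\ve_{n-3-k}^*\otimes\ve_{2n-6}$, and that this cochain is closed; hence $\im(\partial)$ vanishes in that eigenspace and invariance forces $\mathcal{N}$ to contain the \emph{entire} eigenspace $C^2_+(\cdot)_{2(n+k-3)}$, in particular the element $\ve_{n-3-k}^*\wedge\eta^*\otimes\ve_{2n-6}$. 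A direct bracket computation then shows
\begin{equation}
\ad(\ve_1)\Bigl(\ad(\ve_{n-4-k})^2\bigl(\ve_{n-3-k}^*\wedge\eta^*\otimes\ve_{2n-6}\bigr)\Bigr)=-2\,\partial\bigl(X^*\otimes\ve_{n-4+k}\bigr)\neq 0,
\end{equation}
so $\mathcal{N}\cap\im(\partial)\neq 0$, a contradiction. Note that a single application of one nilpotent element does not suffice here --- the paper needs the composite $\ad(\ve_1)\circ\ad(\ve_{n-4-k})^2$ to land in $\im(\partial)$ --- which suggests your single-$N$ mechanism would likely need to be iterated as well. If you want to salvage your approach, I would recommend replacing steps (a)--(c) with this eigenspace argument: find a weight/eigenvalue where $\im(\partial)$ is forced to vanish, and then transport a concrete element of the resulting piece of $\mathcal{N}$ into $\im(\partial)$ by explicit $\mathfrak{g}^0$-operators.
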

\begin{proof}
Let $\mathcal{B}$ be the basis $(Y,H,E,X,\ve_1,\ldots,\ve_{2n-5},\eta)$ for $\mathfrak{g}(\mathfrak{s}^{k,n})$ and let $L_H, L_E: \mathcal{B}\to \Z$ be the functions which take an element of $\mathcal{B}$ to its $H$- and $E$-eigenvalue, respectively. The graded weights and eigenvalues of all elements of $\mathcal{B}$ are given in the following table:
    \begin{equation}
    \label{weights}
        \begin{tabular}{||c | c | c | c | c | c | c ||} 
            \hline
            & $\ve_i$ & $\eta$ & $X$ & $H$ & $E$ & $Y$ \\ [0.5ex] 
            \hline\hline
            $\text{wght}$&$n-4-k-i$&$-3-2k$&$-1$&$0$&$0$&$1$ \\ [1ex] 
            \hline
            $L_H$&$2i+5-2n$&$0$&$2$&$0$&$0$&$-2$ \\ [1ex] 
            \hline
            $L_E$&1&2&0&0&0&0\\ [1ex] 
            \hline
        \end{tabular}
\end{equation}
Write
    \begin{gather}
       \mathcal{B}^1 = \big\{A^*\otimes B: A,B \in \mathcal{B},\ \text{wght}(A)<0\big\}\quad\text{and}
       \\
       \mathcal{B}^2=\big\{A^*\wedge B^*\otimes C: A,B,C\in \mathcal{B},\ \text{wght}(A)<0,\ \text{wght}(B)<0\big\},
    \end{gather}
    the induced bases for $C^1_+\big(\mathfrak{s}^{k,n},\mathfrak{g}(\mathfrak{s}^{k,n})\big)$ and $C^2_+\big(\mathfrak{s}^{k,n},\mathfrak{g}(\mathfrak{s}^{k,n})\big)$. Also, for $\lambda\in \Z$, write
    \begin{gather}
        C_+^k\big(\mathfrak{s}^{k,n},\mathfrak{g}(\mathfrak{s}^{k,n})\big)_\lambda = \big\{c\in C^k_+\big(\mathfrak{s}^{k,n},\mathfrak{g}(\mathfrak{s}^{k,n}\big): \text{ad}(H)(c) = \lambda c\big\}.
    \end{gather}
    Since the coboundary operator $\partial$ is invariant under $H$, we have that $\partial C_+^1\big(\mathfrak{s}^{k,n},\mathfrak{g}(\mathfrak{s}^{k,n})\big)_\lambda \subseteq C_+^2\big(\mathfrak{s}^{k,n},\mathfrak{g}(\mathfrak{s}^{k,n})\big)_\lambda$. For any $n\geq 6$,
    \begin{equation}
    \label{small wght space}
        C_+^1\big(\mathfrak{s}^{k,n},\mathfrak{g}(\mathfrak{s}^{k,n})\big)_{2(n+k-3)} = \big\langle \ve_{n-3-k}^*\otimes \ve_{2n-6}\big\rangle.
    \end{equation}
    To see this, observe that $L_H(\ve_{2n-6}) = 2n-7$ is the maximal value of $L_H$ on $\mathcal{B}$ and $L_H(\ve_{n-3-k})=-2k-1$ is the minimal value of $L_H$ when restricted to the negatively graded elements of $\mathcal{B}$.
    
    Also note that 
    \begin{equation}
    \label{close_eigenspace}
    \partial(\ve_{n-3-k}^*\otimes \ve_{2n-6}) = 0.
    \end{equation}
    This easily follows from the definition of the coboundary map and the following two properties of the algebra  $\mathfrak{g}(\mathfrak{s}^{k,n})$: 
\begin{enumerate}
    \item $\ve_{n-3-k}$ cannot be represented as a commutator of two negatively graded elements;
    \item $\ve_{2n-6}$ commutes with a negatively graded  part of $\mathfrak{g}(\mathfrak{s}^{k,n})$ in the case under consideration, where $k\geq n-5$\footnote {Relation \eqref{close_eigenspace} is also true for $k=n-4$, despite the fact that $\ve_1$ is has negative graded weight and does not commute with $\ve_{2n-6}$.}
    \end{enumerate}

    Now suppose for contradiction that $\mathcal{N}$ is an invariant normalization condition for the prolonged Tanaka symbol $\mathfrak{g}(\mathfrak{s}^{k,n})$; write $\partial$ for the coboundary map $C_+^1\big(\mathfrak{s}^{k,n},\mathfrak{g}(\mathfrak{s}^{k,n})\big)\to C_+^2\big(\mathfrak{s}^{k,n},\mathfrak{g}(\mathfrak{s}^{k,n})\big)$. Since $\mathcal{N}$ and $\im\partial$ are invariant under the diagonalizable map $\text{ad}(H)$, they decompose into eigenspaces $\mathcal{N}_\lambda$ and $\im(\partial)_\lambda$ for the action of $H$:
    \[
        \mathcal{N} = \bigoplus_\lambda \mathcal{N}_\lambda\quad\text{and}\quad \im(\partial) = \bigoplus_\lambda \im(\partial)_\lambda.
    \]
    so that for each $\lambda \in \mathbb{Z}$,
    \[
        C_+^2\big(\mathfrak{s}^{k,n},\mathfrak{g}(\mathfrak{s}^{k,n})\big)_\lambda = \mathcal{N}_\lambda \oplus \im(\partial)_\lambda
    \]
    Since $\text {ad}\,H$ commutes with $\partial$, we have 
    \[
        \im(\partial)_{\lambda}=\partial\Bigl(C_+^1\big(\mathfrak{s}^{k,n},\mathfrak{g}(\mathfrak{s}^{k,n})\big)_\lambda\Bigr)
    \]
    In particular, for $\lambda=2(n+k-3)$ from  \eqref{small wght space} it follows that $\im(\partial)_{\lambda}=\big\langle \partial (\ve_{n-3-k}^*\otimes \ve_{2n-6})\big\rangle = 0$, which implies $\mathcal{N}_{\lambda}= C_+^2\big(\mathfrak{s}^{k,n},\mathfrak{g}(\mathfrak{s}^{k,n})\big)_{\lambda}$. This eigenspace includes the cochain $\ve_{n-3-k}^*\wedge\eta^*\otimes \ve_{2n-6}$, which is thus included in $\mathcal{N}$. However, computing the action of $\ve_1$ and $\ve_{n-4-k}$ on $(\mathfrak{s}^{k,n}_-)^*$, we have on a basis
    \begin{gather*}
        \ad(\ve_{1}) =
        \begin{cases}
            X^*\mapsto 0
            \\
            \ve_i^*\mapsto 0\quad \text{for}\ n-3-k\leq i \leq 2n-6
            \\
            \eta^*\mapsto \ve_{2n-7}^*
        \end{cases}
        \\
        \ad(\ve_{n-4-k}) =
        \begin{cases}
            X^*\mapsto -\ve_{n-3-k}^*
            \\
            \ve_i^*\mapsto -\delta_i^{n-3-k}X^*\quad \text{for}\ n-3-k\leq i \leq 2n-6
            \\
            \eta^*\mapsto (-1)^{n+1-k}\ve_{n-1+k}^*
        \end{cases}
    \end{gather*}
    Using the Leibniz rule, we then have
    \begin{gather*}
        \ad(\ve_1)\Big(\ad(\ve_{n-4-k})^2(\ve_{n-3-k}^*\wedge\eta^*\otimes  \ve_{2n-6})\Big)
        = \ad(\ve_1)\Big(2(-1)^{n-k}X^*\wedge\ve_{n-1+k}^*\otimes \ve_{2n-6}\Big)
        \\
        = 2(-1)^{n-k+1}X^*\wedge\ve_{n-1+k}^*\otimes \eta = -2\cdot\partial\big(X^*\otimes \ve_{n-4+k}\big)
    \end{gather*}
    Since $\mathcal{N}$ is assumed to be invariant under $\mathfrak{g}(\mathfrak{s}^{k,n})$, the above element is also included in $\mathcal{N}$. This implies that $\mathcal{N}\cap \im(\partial)$ is nonzero, contradicting the definition of $\mathcal{N}$.
\end{proof}

Together, Theorems \ref{Tanaka Symbol}, \ref{Existence Theorem}, \ref{Unification Theorem}, and \ref{Nonexistence Theorem} yield the following.

\begin{prop}
    Let $D$ be a $(2,n)$ distribution with $5$-dimensional cube. Then at a generic point, the $(n-5)$th iterated Cartan prolongation $\pr^{(n-5)}(D)$ has constant Tanaka symbol $\mathfrak{s}^{(n-5),n}$, but this symbol does not admit a linear invariant normalization condition.
 
    On the other hand, at a generic point the $(n-4)$th Cartan prolongation $\pr^{(n-4)}(D)$ has Tanaka symbol $\mathfrak{s}^{(n-4),n}$. This symbol admits a linear invariant normalization condition.
\end{prop}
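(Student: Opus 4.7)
The plan is to derive the proposition by assembling results established earlier in the paper, since each half is an immediate consequence of a theorem or corollary already proved. No new computation is required; the work lies in carefully identifying which previously established statement supplies each claim and verifying that the hypotheses match.

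For the first half of the proposition, concerning $\pr^{(n-5)}(D)$, I would first invoke Theorem \ref{Unification Theorem}, which asserts precisely that for a $(2,n)$ distribution of maximal class, the $(n-5)$th iterated Cartan prolongation has constant Tanaka symbol at a generic point, and identifies this symbol with $\mathfrak{s}^{(n-5),n}$. To rule out the existence of a linear invariant normalization condition for this symbol, I would apply Theorem \ref{Nonexistence Theorem} with $k = n-5$, which is permissible since the theorem is stated for all $k \leq n-5$ with $n > 5$. Combining these two gives the first assertion.

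For the second half, concerning $\pr^{(n-4)}(D)$, I would first appeal to the corollary following Proposition \ref{Local Equivalence}, which identifies the Tanaka symbol of $\pr^{n-4}(D)$ at a generic point with the prolonged symplectic symbol \eqref{SympGrading}; by the rephrasing of Proposition \ref{Tanaka Symbol} noted just before Section \ref{sectionUnification}, this symbol is exactly $\mathfrak{s}^{(n-4),n}$. The existence of a linear invariant normalization condition for it is then supplied directly by Theorem \ref{Existence Theorem}, whose proof via Morimoto's criterion (Proposition \ref{Morimoto}) with the choice of inner product and involution $\tau$ described in Section \ref{sectionNormalization} applies verbatim.

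I do not anticipate any genuine obstacle: the proposition is a summary statement whose content has already been distributed across Theorems \ref{Unification Theorem}, \ref{Nonexistence Theorem}, and \ref{Existence Theorem} together with the corollary to Proposition \ref{Local Equivalence}. The only minor care needed is to check that the case $n = 5$ is excluded or separately handled (since Theorem \ref{Nonexistence Theorem} is stated for $n > 5$, and the case $n = 5$ is already noted in the introduction to fall under the $G_2$-parabolic geometry framework where the analogous obstruction does not arise).
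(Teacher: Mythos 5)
Your proposal is correct and matches the paper's own treatment: the paper proves this proposition simply by citing Proposition \ref{Tanaka Symbol}, Theorem \ref{Existence Theorem}, Theorem \ref{Unification Theorem}, and Theorem \ref{Nonexistence Theorem}, exactly the assembly you describe. Your added remark about excluding $n=5$ (where Theorem \ref{Nonexistence Theorem} does not apply and the $G_2$-parabolic setting gives a normalization condition already for $D$ itself) is a sensible precision consistent with the paper's discussion in the introduction.
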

    Therefore $\Symp(D)$ is locally equivalent to $\pr^{n-4}(D)$, which is the first Cartan prolongation where Theorem \ref{Cartan Connection} can be used to construct a Cartan connection for each $(2,n)$ distribution $D$ with $5$-dimensional cube.
    
\bibliographystyle{plain}
\bibliography{Bibliography.bib}
\end{document}